\newtheorem{theorem}{Theorem}[section]
\newtheorem{proposition}{Proposition}[section]
\newtheorem{corollary}{Corollary}[section]
\theoremstyle{definition}
\newtheorem{example}[theorem]{Example}
\newtheorem{remark}[theorem]{Remark}
\def\Z{\mathbb{Z}}
\def\C{\mathbb{C}}
\def\P{\mathbb{P}}
\def\D{\mathcal{D}}
\newcommand{\ba}{\begin{array}}
\newcommand{\ea}{\end{array}}
\def\disp{\displaystyle}
\begin{document}

{\noindent\Large\bf Fiber-dependent deautonomization of integrable 2D\\ mappings and discrete Painlev\'e equations}
\medskip
\begin{flushleft}

\textbf{Adrian Stefan Carstea}\\
National Institute of Physics and Nuclear Engineering, Dept. of Theoretical Physics, Atomistilor 407, 077125, Magurele, Bucharest, Romania\\
E-mail: carstea@gmail.com\\[5pt] 

\textbf{Anton Dzhamay}\\
School of Mathematical Sciences, The University of Northern Colorado, Greeley, CO 80526, USA\\
E-mail: anton.dzhamay@unco.edu\\[5pt]

\textbf{Tomoyuki Takenawa}\\
Faculty of Marine Technology, Tokyo University of Marine Science and Technology, 2-1-6 Etchu-jima, Koto-ku, Tokyo, 135-8533, Japan\\
E-mail: takenawa@kaiyodai.ac.jp

\end{flushleft}

%
%
%
%
%
%
%

%
%
%
%

\begin{abstract}
It is well known that two-dimensional
mappings preserving a rational elliptic fibration, like the Quispel-Roberts-Thompson mappings, can be deautonomized to 
discrete Painlev\'e equations. However, the dependence of this procedure on the choice of a particular elliptic fiber 
has not been sufficiently investigated. In this paper we establish a way of performing the deautonomization for a pair of an
autonomous mapping and a fiber. 
Starting from a single autonomous mapping but varying the type of a chosen fiber, we obtain
different types of  discrete Painlev\'e equations using this deautonomization procedure.
We also introduce a technique for reconstructing a mapping from the knowledge of its induced action on the Picard group and
some additional geometric data. This technique allows us to obtain factorized expressions of discrete Painlev\'e equations, 
including the elliptic case. Further, by imposing certain restrictions on such non-autonomous mappings we obtain new and simple
elliptic difference Painlev\'e equations, including examples whose symmetry groups 
do not appear explicitly in Sakai's classification.
\end{abstract}

\section{Introduction}

The importance of Painlev\'e equations comes not only from their large potential applications in mathematics and physics, but also from the fact that they
represent the essence of any integrable system that is hidden in the structure of singularities \cite{Ablowitz}. 
This is even more interesting for discrete systems, since the non-locality specific to the lattice background provides a phenomenology of higher complexity. 
Moreover, it prevents the use of complex analysis methods such as the Painlev\'e test. 
Thus, integrability of discrete equations seems to be a rather difficult question. However, 
at the beginning of 1990s, several integrability detectors
have been proposed, such as the \emph{singularity confinement}, \emph{algebraic entropy/complexity growth}, or the 
\emph{Nevanlinna theory}, \cite{GRP,HV98,AH00}. 
Among them, the singularity confinement criterion proved to be instrumental in obtaining many examples of 
discrete Painlev\'e equations. Essentially, the singularity confinement requires that any
singularities appearing in the iteration process are confined after a finite number of iterations, and moreover, that the 
initial information is fully recovered. This is contrary to
the non-integrable case, where strange attractors absorb information. 
The singularity confinement criterion was viewed as a discrete analogue of the Painlev\'e property, 
and it was quite successfully applied for \emph{integrable deautonomization} \cite{basil-book1,
basil-book2}. 
It is well known that continuous Painlev\'e equations can be regarded as ``integrable deautonomization'' of nonlinear ordinary differential
equations for elliptic functions. In the discrete case, in complete parallel to this, 
the singularity confinement was successfully applied to construct deautonomizations of such paradigmatic
discrete integrable mappings for the elliptic functions as the Quispel-Roberts-Thompson (or the QRT) mappings \cite{QRT89}.
The QRT mapping itself passes the singularity confinement test. Allowing its coefficients to be functions of independent discrete variable 
and imposing the same singularity confining pattern allows one to obtain a non-autonomous QRT-type mappings. These mappings have continuum limits that
turn out to be exactly the well-known Painlev\'e equations. This simple and powerful
procedure gave rise to a very large collection of examples of discrete Painlev\'e equations. 
Very recently this type of deautonomization has been extended by allowing the
mapping to contain many terms that are not changing the singularity patterns 
\cite{RGWMK15}. 

On the
other hand, Sakai \cite{Sakai01} discovered a deep relationship between discrete Painlev\'e equations and algebraic geometry of surfaces. 
In this approach, one considers families of algebraic surfaces obtained by blowing up nine possibly infinitely close points
on a complex projective plane. The configurations of points vary from the most generic to the most degenerate, and each family
corresponds to a choice of such configuration that is encoded in the configuration of the irreducible components of the anti-canonical 
divisor of the surface, parameters in the family are essentially the coordinates of the singular points. The orthogonal complement
of the divisor classes of these irreducible components in the Picard lattice of the surface 
is a lattice of an affine Weyl group acting as symmetries of the family via Cremona isometries. 
Discrete Painlev\'e equations then correspond to the translational elements of the group and they 
act as isomorphisms of the family. One of the most important results of Sakai's approach is the discovery of the elliptic difference 
Painlev\'e equations whose coefficients are written by elliptic functions in the iteration steps.

In this paper we consider the problem of deautonomization starting from the geometry of a discrete integrable mapping. Any
integrable two dimensional mapping preserving an elliptic fibration (and each fiber) can be lifted to an automorphism of a 
rational elliptic surface. In addition to a generic fiber, this fibration also has singular fibers and choosing a fiber to play 
the role of the anti-canonical divisor results in different affine Weyl symmetry groups. Taking this into account, we propose a 
new deautonomization method that depends on the choice of a fiber and as a result we obtain different types of 
discrete Painlev\'e equations from a single autonomous mapping.

We also want to point out that B.~Grammaticos, A.~Ramani, and their collaborators have also produced various deautonomizations, some of which share the same autonomous form. E.~g., in  \cite{GrRa-2015}, it is shown that a multiplicative and an additive equations share the same autonomous form (29) of \cite{GrRa-2015} and that an elliptic and an additive equations share the same autonomous form (26) of \cite{GrRa-2015}, although the direction of discussion is opposite to ours. Another example is \cite{CGR09}, where an autonomous mapping is deautonomized to an elliptic difference equation, while the original mapping preserves an $A_1$ curve and so it is possible to deautonomize also to $E^{(1)}_{7}$ Painlev\'e. See \cite{AHJN16,HHNS15, JoNa17} about its geometric study.

We also introduce a factorization technique for rational mappings. This technique allows us to explicitly reconstructing mappings from the 
information of their actions on the Picard group. As an application, we obtain factorized expressions of discrete Painlev\'e equations, 
including the elliptic case. 

Non-autonomous mapping obtained by our method are not always difference system, i.e., in general, parameters 
do not depend linearly on the iteration step $n$. Imposing this constraint restricts our 
non-autonomous mappings on certain subspaces, and in this way we obtain new and simple elliptic difference Painlev\'e equations. 
Such restrictions of systems to a subspace in the space of parameters is often called a projective reduction \cite{AHJN16, KNT11}. 
In this way we obtain elliptic difference systems with symmetries of type $D_6^{(1)}$ and
$(A_1+A_1+A_1)^{(1)}$ that do not appear explicitly in Sakai's classification scheme, since it focuses on the generic cases. 
It may be controversial which symmetry is important between that of the mapping itself
\cite{Takenawa01,Takenawa03} or that of the space where the mapping is defined. 
Our approach corresponds to the latter, similar to Sakai's work.

Note that labelings of Painlev\'e equations are changed if their parameter space are restricted, and hence their symmetries become smaller. This kind of phenomena has been reported from their discovery as symmetric or asymmetric forms. E.g. the asymmetric or periodic coefficients version of q-$P_{\text{III}}$ in \cite{RGH91} is Jimbo-Sakai's q-$P_{\text{VI}}$. Also by introducing some arbitrary periodic functions, Ohta et al. deautonomized some mapping coming from Miura transformation (called  triholographic representation) to elliptic $E^{(1)}_{8}$ system \cite{ORG02}, 
where the resulting mapping should have lower symmetries if we take the value of periodic functions as constants.

This paper is organized as follows. In Section 2.1 we explain some notation and conventions used throughout this paper. 
We give the definition of deautonomization in Section
2.2 and explain the factorization technique in Section 2.3. 
In Section 3 we illustrate our general approach by choosing a simple QRT mapping that has six singular fibers, and constructing its different deautonomizations, one of which is $q$-Painlev\'e VI.
In Section 4 we investigate subspaces for non-autonomous elliptic mappings obtained in Section 3, and obtain new and simple 
elliptic difference Painlev\'e equations. In Appendix~\ref{app:rational} we explain how to compute the action 
of a general rational (and not birational) mapping on the divisor classes. 
In Appendix~\ref{app:deautoQRT} we construct the deautonomization for a generic QRT 
mapping on a generic elliptic fiber, and obtain a new and simple
expression of the elliptic difference Painlev\'e equation. 
In Appendix~\ref{app:CFS} we recall C.~F.~Schwartz's algorithm \cite{Schwartz94} that transforms 
a general curve of bi-degree $(2,2)$ into a Weierstrass normal form.
We give an example of a birational representation of the affine Weyl group of type $E_8^{(1)}$ in
Appendix~\ref{app:bir-rep}.

\section{Deautonomization}
\subsection{Background, notation, and conventions}

Throughout this paper $\varphi$ usually denotes a birational mapping of $\P^1\times \P^1$ to itself. One exception is
Section~\ref{SectFactorization}, where $\varphi$ can also be only a rational mapping. In a standard
coordinate system $(x,y)$  a mapping $\varphi$ can be written as $\varphi(x,y) = (\bar{x},\bar{y}) = (f(x,y),g(x,y))$.
Let $\mathbb{C}(\mathbb{P}^{1} \times \mathbb{P}^{1}) = \mathbb{C}(x,y)$ be the field
of rational functions on $\mathbb{P}^{1} \times \mathbb{P}^{1}$. Since we work with equations, 
it is often more convenient to consider the induced \emph{pull-back} map $\varphi^{*}$ on the function fields,
$\varphi^{*}: \mathbb{C}(\bar{x},\bar{y})\to \mathbb{C}(x,y)$, given by the coordinate
substitution. Unfortunately, in coordinates, $\varphi$ and $\varphi^{*}$ are usually written in the same way,
which can lead to confusion that we want to warn the reader about by means of the following example.
\begin{example} 
Let $\varphi_{1}(x,y) = (y, x+1)$ and $\varphi_{2}(x,y) = (xy,y)$ and let 
$(x_{0},y_{0})\in \mathbb{P}^{1} \times \mathbb{P}^{1}$. 
On the level of points 
$\varphi_{2}\circ \varphi_{1}$ acts as the usual composition of mappings, 
$\varphi_{2}\circ \varphi_{1}(x_{0},y_{0}) = \varphi_{2}(y_{0},x_{0}+1) = ((x_{0}+1)y_{0},x_{0}+1)$, but on 
the level of function fields it acts in the opposite order as substitution,
\begin{align*}
(\varphi_{2}\circ \varphi_{1})^{*}(\bar{\bar{x}},\bar{\bar{y}}) 
=& 
(\varphi_{1}^{*}\circ \varphi_{2}^{*})(\bar{\bar{x}},\bar{\bar{y}}) 
= 
\left((\bar{\bar{x}},\bar{\bar{y}})\Big|\text{\tiny
     $\begin{aligned}
	\bar{\bar{x}}&\leadsto\bar{x}\bar{y} \\[-3pt]
	\bar{\bar{y}}&\leadsto \bar{y}
      \end{aligned}$}\right)\Big|\text{\tiny
     $\begin{aligned}
	\bar{x}&\leadsto y\\[-3pt]
	\bar{y}&\leadsto x+1 \end{aligned}$
}\\ 
=&  (\bar{x}\bar{y},\bar{y})\Big|\text{\tiny
     $\begin{aligned}
	\bar{x}&\leadsto y\\[-3pt]
	\bar{y}&\leadsto x+1
     \end{aligned}$
} = ((x+1)y,x+1),
\end{align*}
whereas 
$(\varphi_{2}^{*}\circ \varphi_{1}^{*})(\bar{\bar{x}},\bar{\bar{y}}) = \varphi_{2}^{*}(\bar{y},\bar{x}+1) = (y, xy + 1)$. 
\end{example}

By $\mathcal{X} = \mathcal{X}_{\mathbf{b}}$ we denote a rational algebraic surface obtained from 
$\mathbb{P}^{1} \times  \mathbb{P}^{1}$ by a sequence of blowups; here $\mathbf{b}$ (that we 
sometimes omit, as above) denotes a set of parameters describing positions of the
blowup points. 
For a mapping $\varphi: \mathbb{P}^{1} \times \mathbb{P}^{1} \to \mathbb{P}^{1} \times \mathbb{P}^{1}$ 
we usually use the same letter $\varphi$ (or $\tilde{\varphi}$, when we work with both mappings) 
to denote its extension (i.e., the mapping that coincides with the original $\varphi$ on 
a Zariski open subset of $\P^1\times \P^1$) to a mapping 
$\varphi: \mathcal{X}\to \bar{\mathcal{X}}$ between such surfaces.
Here the bar in $\bar{\mathcal{X}} = \mathcal{X}_{\bar{\mathbf{b}}}$ both helps us to distinguish between the domain and the 
range of the mapping, and also indicates that for non-autonomous mappings parameters describing the 
blowup can evolve.
The mapping $\varphi: \mathcal{X}\to \bar{\mathcal{X}}$ induces the map 
$\varphi^{*}: \mathbb{C}(\bar{\mathcal{X}})\to \mathbb{C}(\mathcal{X})$ between the corresponding function fields, 
which then gives the maps $\varphi^{*}: \operatorname{Div}(\bar{\mathcal{X}})\to \operatorname{Div}(\mathcal{X})$
of the divisor groups and $\varphi^{*}: \operatorname{Pic}(\bar{\mathcal{X}})\to \operatorname{Pic}(\mathcal{X})$
on the Picard groups; we use the same notation for all these maps, since the meaning is always clear from the context.
We also have the usual \emph{push-forward} map $\varphi_{*}$ defined on the Weyl divisors, which gives
the map $\varphi_{*}: \operatorname{Div}({\mathcal{X}})\to \operatorname{Div}(\bar{\mathcal{X}})$; when 
$\varphi$ is birational, $\varphi_{*} = (\varphi^{-1})^{*}$. We refer the reader to Section 2 of \cite{TEGORS03} 
and references therein for a description of the pull-back and push-forward maps in our context.
Note that 
for a divisor $\bar{D}$ on $\bar{\mathcal{X}}$, it is important to distinguish between its \emph{total transform}
$\varphi^{*}(\bar{D})$ and its \emph{proper transform}, which is the Zariski closure of the set
$\{\varphi^{-1}(\bar{d})~|~ \bar{d} \text{ is a generic point in }\bar{D}\}$ in $\mathcal{X}$.

Other common notation and convention that we use are the following.
\begin{itemize}
	\item As usual, $V(f)$ denotes the zero set of a regular function $f$ on $\mathcal{X}$.
	\item We use Roman letter for divisors, and script letters for divisor classes, e.g., 
	$\mathcal{D} = [D]$.
	\item Let a surface $\mathcal{X}'$ be obtained from a surface $\mathcal{X}$
	by blowing up a point $p\in\mathcal{X}$, and let $\pi: \mathcal{X}'\to \mathcal{X}$ be the corresponding 
	blowing down map.
	Then $E = \pi^{-1}(p)$ is the exceptional divisor, and for any divisor $D$ in $\mathcal{X}$, 
	its proper transform is written as $\pi^*(D)- m E$, where $m$ is the multiplicity of $D$ at $p$. 
	Thus the class of the proper tranform of 
	$D$ is written as $[\pi^*(D)- m {E}]=\D- m \mathcal{E}$, where we omit $\pi^{*}$ on the level of classes, 
	so $\mathcal{D}$ stands both for the class of 
	$D$ in $\mathcal{X}$  and the class of its total transform $\pi^{*}(D)$ in $\mathcal{X}'$.	
	\item By $\mathcal{H}_{x}$ and $\mathcal{H}_{y}$ we denote the divisor classes of vertical (resp.~horizontal)
	lines in $\mathbb{P}^{1}\times \mathbb{P}^{1}$, $\mathcal{H}_{x} = [V(x-x_{0})]$, 
	$\mathcal{H}_{y} = [V(y-y_{0})]$, as well as their total transforms in 
	$\operatorname{Pic}(\mathcal{X})$. Then 
	$\operatorname{Pic}(\mathcal{X}) = \Z \mathcal{H}_x \oplus \Z \mathcal{H}_y \oplus \bigoplus_i  \Z \mathcal{E}_i$,
	 where 
	$\mathcal{E}_{i}$ denotes the class of the $i$-th exceptional divisor in $\operatorname{Pic}(\mathcal{X})$.
	\item We use $\mathcal{D}\bullet\mathcal{D}'$ to denote the intersection number of the divisor classes
	$\mathcal{D}$ and $\mathcal{D}'$ in $\operatorname{Pic}(\mathcal{X})$ and put $\mathcal{D}^{2} = \mathcal{D}\bullet \mathcal{D}$; 
	in particular, we have
	\begin{align*}
	\mathcal{H}_{x}\bullet \mathcal{H}_{x} &= \mathcal{H}_{y}\bullet\mathcal{H}_{y}
	= \mathcal{H}_{x}\bullet\mathcal{E}_{i} = \mathcal{H}_{y}\bullet\mathcal{E}_{i} = 0,\\
	\mathcal{H}_{x}\bullet\mathcal{H}_{y} &= 1,\qquad \mathcal{E}_{i}\bullet\mathcal{E}_{j}= -\delta_{ij}.
	\end{align*}
	\item For classes in $\operatorname{Pic}(\bar{\mathcal{X}})$, we write $\bar{\mathcal{D}}$, $\bar{\mathcal{E}}_{i}$, 
	$\mathcal{H}_{\bar{x}}=\bar{\mathcal{H}}_{x}$ and so on.
	\item We denote by $-\mathcal{K}_{\mathcal{X}}$ the anti-canonical divisor class of $\mathcal{X}$, in our case this class is 
	 $-\mathcal{K}_{\mathcal{X}}=2\mathcal{H}_x+2\mathcal{H}_y-\sum_{i}\mathcal{E}_i$.
	\item Of particular importance for us are the divisor classes $\mathcal{D}$ such that $\operatorname{dim}|\mathcal{D}| = 0$,
	since in this case there is a unique prime divisor in this class. For the blowup of $\mathbb{P}^{1} \times \mathbb{P}^{1}$ 
	at a point $p_{i}(x_{i},y_{i})$, one example of such divisor and its class is $E_{i}\in \mathcal{E}_{i}$, another is
	$H_{x} - E_{i}\in \mathcal{H}_{x} - \mathcal{E}_{i}$, where $H_{x} - E_{i}$ is a proper transform of the line
	$x=x_{i}$. As far as we are aware, there is no term for such divisor classes, so in what follows we call them
	\emph{deterministic classes} and for such classes we often revert back to the Roman letters, thus identifying the class with 
	its unique prime divisor. 
\end{itemize}

\subsection{Definition of deautonomization}

Let $\varphi:\mathcal{X}\to \mathcal{X}$ be an automorphism of a rational elliptic surface $\mathcal{X}$, where 
$\mathcal{X}$ is
obtained from $\P^1 \times \P^1$ by $8$ successive blowups. We restrict to the case when $\varphi$ is not 
periodic, the degree of the 
elliptic fiber is $(2,2)$, and $\varphi$ preserves each fiber, i.e.~$\varphi$ is a nontrivial translation in a Mordell-Weil lattice (Cf. Corollary~4.5.6 of \cite{Duistermaat}).  Thus we do not consider the case when $\varphi$ exchanges
fibers or when the degree of fibers is higher, as in \cite{HKY01,TMNT09,CGR09} (see \cite{AHJN16,JoNa17} for the geometric sturdy of the elliptic difference Painlev\'e obtained in \cite{CGR09}, see also \cite{CT12} for classification of automorphisms of a rational elliptic surface). 
A model example of such mapping $\varphi$ is the QRT mapping that we describe
in Section~\ref{sub:ReviewQRT}.

For deautonomization, we consider $\mathcal{X} = \mathcal{X}_{\mathbf{b}}$ to be an element 
of a family of rational surfaces $\mathcal{X}_{\mathcal{A}'}=\{\mathcal{X}_\mathbf{a}\}_{\mathbf{a}\in \mathcal{A}'}$, where each 
$\mathcal{X}_{\mathbf{a}}$ is obtained from $\P^1 \times \P^1$ by $8$ successive blowups and 
$\mathbf{a}\in \mathcal{A}'$ is a set of parameters describing the positions of blowups. Note that in general a surface 
$\mathcal{X}_{\mathbf{a}}$ in this family is not elliptic, and so our choice of 
parameters $\mathbf{b}$ for $\mathcal{X}_{\mathbf{b}}$ is special. 
Although on the level of abstract lattices
\begin{equation*}
\operatorname{Pic}(\mathcal{X}_{\mathbf{a}})\simeq \operatorname{Pic}(\mathcal{X}_{\mathbf{a}'})\simeq
\Z \mathcal{H}_x \oplus \Z \mathcal{H}_y \oplus \bigoplus_i  \Z \mathcal{E}_i,\qquad \mathbf{a},\mathbf{a}'\in \mathcal{A}',
\end{equation*}
this isomorphism in general is not induced by a mapping from $\mathcal{X}_{\mathbf{a}}$ 
to $\mathcal{X}_{\mathbf{a}'}$; we want to restrict to the situation when this actually happens. 
That is, we would like to extend the mapping 
$\varphi: \mathcal{X}_{\mathbf{b}}\to \mathcal{X}_{\mathbf{b}}$ to a mapping
$\varphi: \mathcal{X}_{\mathbf{a}}\to \mathcal{X}_{\mathbf{a}'}$ for some subfamily $\mathcal{A}\subset \mathcal{A}'$, 
where $\mathbf{a}, \mathbf{a}'\in \mathcal{A}$ may be different --- i.e., we allow the positions
of the blowup points to move (that's \emph{deautonomization}). 
For that, we need additional information provided by a choice 
of a fiber in the original elliptic fibration. 

Recall that for an elliptic fibration $\pi: \mathcal{X} \to \P^1$ a generic fiber is an elliptic curve and
there are singular fibers over 12 points (counted with multiplicities) on the $\mathbb{P}^{1}$ base.
The classification of such singular fibers into 22 types is due to Kodaira, \cite{Kodaira63}.
Fix a fiber $D$, not necessarily singular, of the fibration $\pi: \mathcal{X}\to \P^1$. Then
$D$ is an anti-canonical divisor of $\mathcal{X}_{\mathbf{b}}$ that can be written 
as a sum of irreducible divisors $D_{i}$ as $D=\sum_i m_i D_i$, where $m_i \in {\mathbb N}$ are multiplicities. 
We then restrict to the subfamily 
$\mathcal{X}_{\mathcal{A}}$ preserving this decomposition, i.e.,  for each $\mathbf{a}\in \mathcal{A}$,
there exists the same decomposition $D_{\mathbf{a}}=\sum_i m_i D_i'$ of an 
anti-canonical divisor,
$D_i'$ belongs to the class of $D_i$  under the identification $\operatorname{Pic}(\mathcal{X}_{\mathbf{a}})\simeq \operatorname{Pic}(\mathcal{X}_\mathbf{b})$, and $D_{\mathbf{a}}$ is isomorphic to $D$ via M\"obius transformations
acting on both factors of $\mathbb{P}^{1} \times \mathbb{P}^{1}$.
A surface $\mathcal{X}_\mathbf{a}, \mathbf{a}\in \mathcal{A}$, is a \emph{generalized Halphen surface}.
Here, a rational surface $X$ is called a generalized Halphen surface if the anti-canonical divisor $-K_X$  is decomposed into irreducible divisors $D_1, \dots, D_s$ as $D = \sum_{i
=1}^s m_iD_i\quad (m_i \geq 1)$, where
the linear equivalence class of $D$ is $-K_X$ and $D_i \bullet K_X = 0$ for all $i$,   
whereas
a rational elliptic surface is also called a \emph{Halphen surface}.
\begin{remark}

	What follows is true only for generic values of $\mathbf{a}\in \mathcal{A}$, but we are only 
	interested in the generic case. Without going into details, we assume that some special 
	values of parameters had been excluded from $\mathcal{A}$, see \cite{Sakai01} for more information.
\end{remark}

The deautonomization $\varphi: \mathcal{X}_\mathcal{A}\to \mathcal{X}_\mathcal{A}$ is defined by the following conditions:
\begin{enumerate}
\item[(i)] $\varphi$ acts on the parameter space $\mathcal{A}$ as a bijection and the restriction 
of $\varphi$ to $\mathcal{X}_\mathbf{a}$, i.e. 
$\disp \varphi_\mathbf{a}:=\varphi|_{\mathcal{X}_\mathbf{a}}: \mathcal{X}_\mathbf{a}\to \mathcal{X}_{\varphi(\mathbf{a})}$, is an isomorphism;
\item[(ii)] the action $\varphi_{\mathbf{a}}^*:\operatorname{Pic}(\mathcal{X}_{\varphi(\mathbf{a})})\to \operatorname{Pic}(\mathcal{X}_\mathbf{a})$ coincides with the original action $\varphi^*:\operatorname{Pic}(\mathcal{X})\to \operatorname{Pic}(\mathcal{X})$.
\end{enumerate}

The deautonomization is unique modulo M\"{o}bius transformations of each of the factor of $\P^1 \times \mathbb{P}^{1}$.
Indeed, assume that there exist two isomorphisms $\varphi:\mathcal{X} \to \bar{\mathcal{X}}$ and $\varphi':\mathcal{X} \to \bar{\mathcal{X}}'$. Then, from the condition (ii), we see that $\phi=\varphi'\circ \varphi^{-1}: \bar{\mathcal{X}} \to \bar{\mathcal{X}}'$ acts on the Picard lattice as the identity, and hence $\phi$ is nothing but a product of M\"{o}bius transformations. Controlling the ambiguity of  M\"{o}bius transformations is one of the
most intricate steps in constructing discrete Painlev\'e equations and, more generally, of rational mappings from the 
given action on the Picard groups. We address this issue by using some geometric ideas to write down our mappings in 
factorized form, as  explained in the next subsection. Similar formulae were also pointed out in \cite{KNY16,Viallet15}, 
here we generalize and prove them.


\subsection{Factorization formulae}\label{SectFactorization}
Although we usually write our mappings in affine coordinates, to understand and to prove the factorization formulae
it is much more convenient to work in homogeneous coordinates. We do so in this section and
explain how the resulting formulae specialize to affine charts.

Let $[X_{0}:X_{1}]$ be the standard homogeneous coordinate on the complex projective line $\mathbb{P}^{1}$. Then 
the corresponding  affine
coordinates for the charts $U_{0} = \{[X_{0}:X_{1}] \mid X_{1}\neq 0\}$ and $U_{\infty} = \{[X_{0}:X_{1}] \mid X_{0}\neq 0\}$ 
are $x=[x:1]$ and $X = [1:X]$, so $X= 1/x$, as usual. 

A mapping $\psi: \mathbb{P}^{1} \to \mathbb{P}^{1}$ in homogeneous coordinates can be written
in the form $\psi([X_{0}:X_{1}]) = [\bar{X}_{0}:\bar{X}_{1}]=[P_0(X_{0}, X_{1}):P_1(X_{0}, X_{1})]$, where $P_i(X_{0}, X_{1})$'s are coprime homogeneous polynomials of the same degree. 
A mapping $\psi$
is a \emph{M\"obius transformation}, 
$\psi \in \operatorname{Aut}(\mathbb{P}^{1}) = \operatorname{\mathbf{PGL}}_{2}(\mathbb{C})$,
if these polynomials are linear forms, 
\begin{equation*}
\psi([X_{0}:X_{1}]) = [a X_{0} + b X_{1} : c X_{0} + d X_{1}],\qquad a,b,c,d \in \C,\  ad - bc \neq 0.
\end{equation*}
Let a point $A_{i}\in \mathbb{P}^{1}$ have homogeneous coordinates $\big[X_0^{(i)}:X_1^{(i)}\big]$ and consider the 
linear form $L_i(X_0,X_1)=X_1^{(i)}X_0-X_0^{(i)}X_1$. Then $A_{i} = V(\lambda_{i} L_{i}(X_{0},X_{1}))$, 
$\lambda_{i}\in \mathbb{C}^{\times }$,
and so for
two distinct points $A_{1}$ and $A_{2}$ on $\mathbb{P}^{1}$ a unique, up to scaling, 
M\"obius transformation mapping them to 
the points $[0:1]$ and $[1:0]$ respectively is given by 
$\psi([X_{0}:X_{1}]) = [\lambda_{1} L_{1}(X_{0},X_{1}), \lambda_{2} L_{2}(X_{0},X_{1})]$. If both $A_{1}$ and $A_{2}$
are different from $[1:0]$, this mapping in the  affine charts 
$U_{0}$, $\bar{U}_{0}$ can be written as
\begin{equation*}
\psi(x) = \frac{\lambda_{1}}{\lambda_{2}} \frac{L_{1}(X_{0},X_{1})}{L_{2}(X_{0},X_{1})}  = 
\frac{\lambda_{1}}{\lambda_{2}} \frac{X_{1}^{(1)}}{X_{1}^{(2)}} \frac{(x - x_{1})}{(x - x_{2})}  
 = \lambda \frac{(x-x_{1})}{(x-x_{2})}, 
\end{equation*}
and so the mapping is written, again up to scaling, 
as a ratio of corresponding affine linear polynomials $l_{i}(x) = x - x_{i}$. However, if one of $A_{i}$ is
$[1:0]$, it is easy to see that the corresponding linear polynomial becomes $l_{i}(x) = 1$. 
This remark is important to keep in mind when writing 
affine expressions for our mappings. Finally, to fix the scaling, choose a point $A_{3}$, different from 
$A_{1}$ and $A_{2}$, so that $\psi(A_{3}) = [1:1]$. Then 
\begin{align*}
\psi([X_{0}:X_{1}]) &= \left[L_{2}\big(X_{0}^{(3)},X_{1}^{(3)}\big) L_{1}\big(X_{0},X_{1}\big):
L_{1}\big(X_{0}^{(3)},X_{1}^{(3)}\big) L_{2}\big(X_{0},X_{1}\big)\right],\\ 
\psi(x) &= \frac{(x_{3} - x_{2})(x - x_{1})}{(x_{3} - x_{1})(x-x_{2})}, 
\end{align*}
which is just the usual cross-ratio formula.

A M\"obius transformation mapping distinct points $A_{1}$ and $A_{2}$ to 
 distinct points $\bar{A}_{1}$ and $\bar{A}_{2}$ respectively is given, up to scaling, by
the composition of two M\"obius transformations, 
$\psi = \bar{\psi}_{1}^{-1}\circ \psi_{2}$, where $\psi_{1}(A_{1}) = \bar{\psi}_{1}(\bar{A}_{1}) = [0:1]$
and $\psi_{2}(A_{2}) = \bar{\psi}_{2}(\bar{A}_{2}) = [1:0]$. For us it is more convenient to 
write this map in an implicit form as
\begin{align}\label{basic_homo}
\left[\bar{L}_{1}(\bar{X}_0,\bar{X}_1) :  \bar{L}_{2}(\bar{X}_0,\bar{X}_1)\right] &= 
\left[\lambda_{1} L_{1}(X_0,X_1):\lambda_{2} L_{2}(X_0,X_1)\right],\\
\intertext{where $L_i$, $\bar{L}_{i}$ are the linear forms defined above and 
$\lambda_i \in \mathbb{C}^{\times}$. In the affine form this mapping can be written as}
\frac{\bar{l}_{1}(x)}{\bar{l}_{2}(x)} &= C \frac{l_{1}(x)}{l_{2}(x)},
\end{align}
where $l_{i}(x) = x - x_{i}$ or $l_{i}(x) =1$, as explained above, and similarly for $\bar{l}_{i}(x)$.

Given that expressions in the homogeneous coordinate are somewhat cumbersome, from now on we use $x$ and 
$x_{i}$ to denote coordinates and points on $\mathbb{P}^{1}$, allowing $x = \infty$ (that we can also 
write as $X=0$), and we assume that 
the reader can easily restore the corresponding expressions in the homogeneous coordinates when needed. 

We are now ready to describe how to write down  factorization formulae for mappings from the information
about their action on the divisors. We begin with the projective plane version. 

\begin{proposition}[$\mathbb{P}^1\times \mathbb{P}^1$ version]\label{prop_bir}
Let
$\varphi:\mathbb{P}^1\times \mathbb{P}^1 \dashrightarrow \mathbb{P}^1\times \mathbb{P}^1$ be a dominant rational mapping, 
$\varphi(x,y) = (\bar{x},\bar{y})$.
For $\bar{x}_{1}\neq \bar{x}_{2}\in \mathbb{P}^{1}$ and $\bar{y}_{1}\neq \bar{y}_{2}\in \mathbb{P}^{1}$, 
consider the divisors $\bar{D}_{1} = V(\bar{x}-\bar{x}_{1})$, $\bar{D}_{2} = V(\bar{x}-\bar{x}_{2})$, 
$\bar{D}_{3} = V(\bar{y}-\bar{y}_{1})$, and
$\bar{D}_{4} = V(\bar{y}-\bar{y}_{2})$. Let their pull-backs $\varphi^{*}(\bar{D}_{i})$ be written as a sum of 
distinct irreducible divisors as
$\varphi^{*}(\bar{D}_{i}) = m_{i,1} D_{i,1} + \cdots + m_{i,S_{i}}D_{i,S_{i}}$, where $m_{i,j}\in \mathbb{N}$
are multiplicities. Let $p_{i,j}(x,y)$ 
be an irreducible defining polynomial of $D_{i,j}$; $D_{i,j} = V(p_{i,j}(x,y))$.
Then the mapping $\varphi$ is given implicitly by
\begin{align*}
	\frac{\bar{l}_{1}(\bar{x})}{\bar{l}_{2}(\bar{x})} &= 
	\frac{\lambda_{1} \prod_{s=1}^{S_{1}} p_{1,s}(x,y)^{m_{1,s}} }{\lambda_{2} \prod_{s=1}^{S_{2}} p_{2,s}(x,y)^{m_{2,s}}}
	= 
	C_{1}\frac{\prod_{s=1}^{S_{1}} p_{1,s}(x,y)^{m_{1,s}} }{ \prod_{s=1}^{S_{2}} p_{2,s}(x,y)^{m_{2,s}}},  \\
\frac{\bar{l}_{1}(\bar{y})}{ \bar{l}_{2}(\bar{y})} &=  
\frac{\mu_{1}\prod_{s=1}^{S_{3}} p_{3,s}(x,y)^{m_{3,s}} }{\mu_{2}\prod_{s=1}^{S_{4}} p_{4,s}(x,y)^{m_{4,s}}}
=C_{2}   \frac{\prod_{s=1}^{S_{3}} p_{3,s}(x,y)^{m_{3,s}} }{\prod_{s=1}^{S_{4}} p_{4,s}(x,y)^{m_{4,s}}}, 
\end{align*}
where $C_{i}$ are some non-zero constants that can be computed if we also know the 
pull-backs of $V(\bar{x} - \bar{x}_{3})$
and $V(\bar{y} - \bar{y}_{3})$ for some other points $\bar{x}_{3}$ and $\bar{y}_{3}$.
\end{proposition}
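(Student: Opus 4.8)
The plan is to treat the two coordinate equations independently, since the derivation for $\bar{y}$ is word-for-word that for $\bar{x}$ with $(\bar{D}_1,\bar{D}_2,\lambda_i)$ replaced by $(\bar{D}_3,\bar{D}_4,\mu_i)$; so I concentrate on the first line. The starting observation is that, by the discussion culminating in \eqref{basic_homo}, the left-hand side $\bar{l}_1(\bar{x})/\bar{l}_2(\bar{x})$ is the rational function $\bar{u}$ on the target $\mathbb{P}^1\times\mathbb{P}^1$ whose divisor is exactly
\[
\operatorname{div}(\bar{u})=\bar{D}_1-\bar{D}_2 .
\]
Indeed $\bar{u}$ vanishes along $\bar{D}_1=V(\bar{x}-\bar{x}_1)$ and has a simple pole along $\bar{D}_2=V(\bar{x}-\bar{x}_2)$, while the potential zero or pole along $\{\bar{x}=\infty\}$ cancels because $\bar{x}_1\neq\bar{x}_2$ --- this cancellation is precisely what the convention $\bar{l}_i=1$ records when one of the $\bar{x}_i$ is infinite. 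Thus the whole statement reduces to computing $\operatorname{div}(\varphi^{*}\bar{u})$ and recognizing it.

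First I would pull back. For a dominant rational map the pull-back of a \emph{principal} divisor is the divisor of the pulled-back function, and therefore, by linearity of $\varphi^{*}$,
\[
\operatorname{div}(\varphi^{*}\bar{u})=\varphi^{*}(\bar{D}_1)-\varphi^{*}(\bar{D}_2)
=\sum_{s}m_{1,s}D_{1,s}-\sum_{s}m_{2,s}D_{2,s},
\]
where the last equality is the hypothesis on the decomposition of the pull-backs. On the other hand the right-hand side $R=\prod_s p_{1,s}^{m_{1,s}}/\prod_s p_{2,s}^{m_{2,s}}$ has precisely the same zeros and poles in the affine chart $\mathbb{C}^2\subset\mathbb{P}^1\times\mathbb{P}^1$. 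Hence $\varphi^{*}\bar{u}/R$ is regular and nowhere zero on $\mathbb{C}^2$, so it is a nonzero constant $C_1$ (a rational function with no affine zeros or poles is constant, since any nonconstant polynomial vanishes somewhere on $\mathbb{C}^2$); this is the asserted identity.

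It is instructive to run the same computation explicitly, as this both confirms the multiplicities and exhibits $C_1$. Writing $\bar{x}=f(x,y)=N(x,y)/M(x,y)$ with $\gcd(N,M)=1$, the pull-back is $\varphi^{*}\bar{u}=(N-\bar{x}_1 M)/(N-\bar{x}_2 M)$, the factor $M$ having cancelled; the two polynomials are coprime, since a common factor would divide both $(\bar{x}_2-\bar{x}_1)M$ and $(\bar{x}_2-\bar{x}_1)N$, hence both $M$ and $N$. Consequently the affine zero divisor of $N-\bar{x}_i M$ is the affine part of $\varphi^{*}(\bar{D}_i)$, giving $N-\bar{x}_i M=c_i\prod_s p_{i,s}^{m_{i,s}}$ and $C_1=c_1/c_2$. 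To pin down $C_1$ without knowing $f$, I would solve the relation $\bar{l}_1(\bar{x})/\bar{l}_2(\bar{x})=C_1\Pi_1/\Pi_2$ for $f$, with $\Pi_i=\prod_s p_{i,s}^{m_{i,s}}$, obtaining $f=(\bar{x}_1\Pi_2-C_1\bar{x}_2\Pi_1)/(\Pi_2-C_1\Pi_1)$, and then impose that $\varphi^{*}(V(\bar{x}-\bar{x}_3))$ equal the given divisor $\sum_s m_{3,s}D_{3,s}$; this forces $(\bar{x}_1-\bar{x}_3)\Pi_2-C_1(\bar{x}_2-\bar{x}_3)\Pi_1$ to be proportional to $\prod_s p_{3,s}^{m_{3,s}}$, and evaluating at any point of the latter's zero locus yields $C_1$ --- the cross-ratio already seen near \eqref{basic_homo}.

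The step I expect to be the real obstacle is the \emph{matching of multiplicities}: that the exponents $m_{i,s}$ in the polynomial factorization of $N-\bar{x}_i M$ genuinely coincide with those appearing in the divisor-theoretic pull-back $\varphi^{*}(\bar{D}_i)$. For birational $\varphi$ this is routine, but for a general dominant rational mapping the indeterminacy locus and ramification can alter these numbers, and their correct computation is exactly the content of Appendix~\ref{app:rational}, which I would invoke at this point. A secondary, purely bookkeeping, difficulty is the behaviour at infinity: one must apply the cancellations along $\{\bar{x}=\infty\}$ and the convention $\bar{l}_i=1$ consistently, so that the affine comparison underlying the passage ``no zeros or poles, hence constant'' is legitimate.
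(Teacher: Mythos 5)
Your proof is correct and follows essentially the same route as the paper's: both identify $\bar{l}_1(\bar{x})/\bar{l}_2(\bar{x})$ as the function with divisor $\bar{D}_1-\bar{D}_2$ and conclude that its pull-back must agree with the factorized product up to a nonzero constant, the paper phrasing this as ``immediate in homogeneous coordinates'' while you run the equivalent affine argument (a rational function with trivial divisor on $\mathbb{C}^2$ is a unit of $\mathbb{C}[x,y]$). Your explicit $N/M$ computation in fact settles the multiplicity-matching worry you raise at the end, so there is no need to defer to Appendix~\ref{app:rational}, which concerns only the blowup version.
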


\begin{proof}
	This result is immediate when written in the homogeneous coordinates, 
	\begin{align*}
	\left[ \bar{L}_{1}(\bar{X}_0,\bar{X}_1) :  \bar{L}_{2}(\bar{X}_0,\bar{X}_1)\right] =&
	[\lambda_{1}P_1(X_0,X_1,Y_0,Y_1): \lambda_2 P_2(X_0,X_1,Y_0,Y_1)],\\
	\left[ \bar{L}_{3}(\bar{Y}_0,\bar{Y}_1) :  \bar{L}_{4}(\bar{Y}_0,\bar{Y}_1)\right] =&
	[\lambda_{3}P_3(X_0,X_1,Y_0,Y_1): \lambda_4 P_4(X_0,X_1,Y_0,Y_1)],
	\end{align*}
	where
	\begin{align*}
	P_i([X_0:X_1],[Y_0:Y_1])=& \prod_{s=1}^{S_{i}} P_{i,s}([X_0:X_1],[Y_0:Y_1])^{m_{i,s}},
	\end{align*}
	$\lambda_i$'s are non-zero constants, and 
	$P_{i,s}([X_0:X_1],[Y_0:Y_1])$ are the homogeneous defining polynomial of $D_{i,s}$.
	The affine version then follows similar to the discussion about M\"obius transformations, except that now we need to pay
	attention to the multiplicities. 
\end{proof}

\begin{example} Consider the following simple example illustrating this idea. Let the mapping $\varphi$
	be given by
\begin{equation*}
	\varphi: (x,y)\mapsto (\bar{x},\bar{y}) = \left( \lambda \frac{y(y-a_{3})}{x(y-a_{4})}, y \right)
	= \left( \frac{\lambda_{1} X_{1}Y_{0}(Y_{0} - a_{3}Y_{1})}{\lambda_{2} X_{0}Y_{1}(Y_{0} - a_{4}Y_{1})}, 
	\frac{Y_{0}}{Y_{1}}  \right),
\end{equation*}
and let $\bar{x}_{1}=0$, $\bar{x}_{2}=\infty$, $\bar{y}_{1}=0$, and $\bar{y}_{2}=\infty$.
Then
\begin{alignat*}{2}
\varphi^{*}(V(\bar{X}_{0})) &= V(X_{1}) + V(Y_{0}) + V(Y_{0} - a_{3} Y_{1}), &\quad \varphi^{*}(V(\bar{Y}_{0})) &= V(Y_{0}),\\
\varphi^{*}(V(\bar{X}_{1})) &= V(X_{0}) + V(Y_{1}) + V(Y_{0} - a_{4} Y_{1}), &\quad  \varphi^{*}(V(\bar{Y}_{1})) &= V(Y_{1}).
\end{alignat*}
Conversely, knowing the above expressions for 
$V(\varphi^{*}(\bar{X}_{0}))$ and $V(\varphi^{*}(\bar{X}_{1}))$, we can immediately write factorized 
homogeneous polynomials for $\bar{X}_{0}$ and $\bar{X}_{1}$, uniquely up to non-zero proportionality
constant, which gives
\begin{align*}
\left[\bar{X}_{0}:\bar{X}_{1}\right] &= \left[\lambda_{1} X_{1} Y_{0} (Y_{0} - a_{3} Y_{1}) : 
\lambda_{2} X_{0} Y_{1} (Y_{0} - a_{4} Y_{1})\right],
\intertext{and from this expression the affine formula for $\bar{x}$ immediately follows. Similarly,}
\left[\bar{Y}_{0}:\bar{Y}_{1}\right] &= \left[\mu_{1} Y_{0}: \mu_{2} Y_{1}\right],\qquad \bar{y} = C_{2} y;\\
\intertext{and if we know further that}
\varphi^{*}(V(\bar{Y}_{0} - a_{3} \bar{Y}_{1})) &= V(Y_{0} - a_{3} Y_{1}),
\end{align*}
we see that $[a_{3}:1] = [\mu_{1} a_{3} : \mu_{2}]$,
and hence $[\mu_{1}:\mu_{2}]=[1:1]$. Thus, we recover the normalization constant for $\bar{y}=y$.
\end{example}

Let us now consider the blowup version of Proposition~\ref{prop_bir}. The main idea is the same, the only change 
is that now we assume that we have some knowledge of the divisor map between the surfaces $\bar{\mathcal{X}}$ and 
$\mathcal{X}$ that we want to use to recover the expression for the map $\varphi$. For that, we recover
the necessary data for the divisor map $\varphi^{*}$ by using the composition of the blowdown maps and the
divisor map between the blowup surfaces.

\begin{proposition}[blowup version]\label{prop_blow} Consider the following commutative diagrams,
	\begin{center}
	\begin{tikzpicture}[>=stealth]
		\begin{scope}
		\node (P1)  at (0,0) {$\mathbb{P}^{1} \times \mathbb{P}^{1} $}; 	
		\node (P2)  at (3,0) {${\mathbb{P}^{1} \times \mathbb{P}^{1}}$}; 	
		\node (X1)  at (0,1.5) {$\mathcal{X}$}; 	
		\node (X2)  at (3,1.5) {$\bar{\mathcal{X}}$}; 	
		\draw[->,dashed] (P1)--(P2) node [above,align=center,midway] {$\varphi$};
		\draw[->,dashed] (X1)--(X2) node [above,align=center,midway] {$\tilde{\varphi}$};
		\draw[->] (X1)--(P1) node [left,align=center,midway] {$\pi$};
		\draw[->] (X2)--(P2) node [right,align=center,midway] {$\bar{\pi}$};
		\end{scope}
		\begin{scope}[xshift=6cm]
		\node (P1)  at (0,0) {$\operatorname{Div}\left(\mathbb{P}^{1} \times \mathbb{P}^{1}\right)$}; 	
		\node (P2)  at (4,0) {$\operatorname{Div}\left({\mathbb{P}^{1} \times \mathbb{P}^{1}}\right)$}; 	
		\node (X1)  at (0,1.5) {$\operatorname{Div}\left(\mathcal{X}\right)$}; 	
		\node (X2)  at (4,1.5) {$\operatorname{Div}\left(\bar{\mathcal{X}}\right)$}; 	
		\draw[->] (P2)--(P1) node [above,align=center,midway] {$\varphi^{*}$};
		\draw[->] (X2)--(X1) node [above,align=center,midway] {$\tilde{\varphi}^{*}$};
		\draw[->] (X1)--(P1) node [left,align=center,midway] {$\pi_{*}$};
		\draw[->] (P2)--(X2) node [right,align=center,midway] {$\bar{\pi}^{*}$};
		\end{scope}		
	\end{tikzpicture},
	\end{center}
	where rational surfaces $\mathcal{X}$ and $\bar{\mathcal{X}}$ are obtained from $\mathbb{P}^{1} \times \mathbb{P}^{1}$ 
	via a sequence of blowups,  $\pi$ and $\bar{\pi}$ are the corresponding blowdown maps,  
	$\varphi$ and $\tilde{\varphi}$ are dominant rational mappings with $\tilde{\varphi}$ an extension of 
	$\varphi$, $\varphi^{*}$, $\tilde{\varphi}^{*}$, $\bar{\pi}^{*}$ are the pull-back maps on the divisors, 
	and $\pi_{*}$ is the push-forward map on the algebraic cycles. 
	For $\bar{x}_1\neq \bar{x}_2 \in \P^1$ and $\bar{y}_1\neq \bar{y}_2 \in \P^1$ consider the divisors 
	$\bar{D}_{1}  = V(\bar{x}-\bar{x}_{1})$, $\bar{D}_{2} = V(\bar{x} - \bar{x}_{2})$, 
	$\bar{D}_{3} = V(\bar{y} - \bar{y}_{1})$, and $\bar{D}_{4} = (\bar{y} - \bar{y}_{2})$.
	Let 
	\begin{equation*}
	\bar{\pi}^{*}(\bar{D}_{i}) = \sum_{j=1}^{J_{i}}n_{i,j} \bar{D}_{i,j} \qquad\text{and}\qquad 
	{\tilde{\varphi}}^{*}(\bar{D}_{i,j}) =  \sum_{k=1}^{K_{i}}m_{i,j,k} {D}'_{i,j,k}
	\end{equation*}
	be the expressions of pull-back divisors in terms of distinct irreducible ones, let 
	$D_{i,j,k} = \pi_{*}({D}'_{i,j,k})$ (recall that for exceptional divisors, $\pi_{*}(E_{i}) = 0$), and let 
	$p_{i,j,k}(x,y)$ be the irreducible defining polynomials of $D_{i,j,k}$ (and so if $D'_{i,j,k}$ is an 
	exceptional divisor, $p_{i,j,k}(x,y)=1$). Then  the mapping $\varphi$ is given implicitly by
	\begin{align}
	\frac{\bar{l}_{1}(\bar{x})}{\bar{l}_{2}(\bar{x})} &= 
	C_{1}\frac{\prod\limits_{j=1}^{J_{1}} \left(\prod\limits_{k=1}^{K_{1,j}} p_{1,j,k}(x,y)^{m_{1,j,k}}\right)^{n_{1,j}} }{ 
	\prod_{j=1}^{J_{2}} \left(\prod\limits_{k=1}^{K_{2,j}} p_{2,j,k}(x,y)^{m_{2,j,k}}\right)^{n_{2,j}}},  \label{eq:xup_gen}\\
	\frac{\bar{l}_{1}(\bar{y})}{ \bar{l}_{2}(\bar{y})} &=  C_{2}   
	\frac{\prod\limits_{j=1}^{J_{3}} \left(\prod\limits_{k=1}^{K_{3,j}} p_{3,j,k}(x,y)^{m_{3,j,k}}\right)^{n_{3,j}}  	}{\prod\limits_{j=1}^{J_{4}} 	\left(\prod\limits_{k=1}^{K_{4,j}} p_{4,j,k}(x,y)^{m_{4,j,k}}\right)^{n_{4,j}} }, \label{eq:yup_gen}
	\end{align}
	where $C_{i}$ are again some non-zero proportionality constants.
\end{proposition}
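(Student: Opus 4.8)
The plan is to reduce the blowup version to the already-established $\mathbb{P}^1\times\mathbb{P}^1$ version, Proposition~\ref{prop_bir}, by using the two commutative diagrams to compute the total transform $\varphi^{*}(\bar{D}_{i})$ as a divisor on $\mathbb{P}^1\times\mathbb{P}^1$ in terms of the data attached to the intermediate surfaces $\mathcal{X}$ and $\bar{\mathcal{X}}$. Once I know that $\varphi^{*}(\bar{D}_{i}) = \sum_{j,k} n_{i,j}\,m_{i,j,k}\, D_{i,j,k}$ on $\mathbb{P}^1\times\mathbb{P}^1$, with $D_{i,j,k} = V(p_{i,j,k})$ and $p_{i,j,k}=1$ whenever $D'_{i,j,k}$ is exceptional, the defining polynomial of $\varphi^{*}(\bar{D}_{i})$ is exactly $\prod_{j}\bigl(\prod_{k} p_{i,j,k}^{m_{i,j,k}}\bigr)^{n_{i,j}}$, and feeding these four polynomials (for $i=1,2,3,4$) into Proposition~\ref{prop_bir} yields the implicit equations \eqref{eq:xup_gen} and \eqref{eq:yup_gen} together with the constants $C_{1},C_{2}$.

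First I would establish the divisor-level identity $\varphi^{*} = \pi_{*}\circ\tilde{\varphi}^{*}\circ\bar{\pi}^{*}$, which is precisely the content of the second diagram. The top square of the first diagram commutes as a statement about rational maps, $\bar{\pi}\circ\tilde{\varphi} = \varphi\circ\pi$, because $\tilde{\varphi}$ is by hypothesis an extension of $\varphi$ and $\pi,\bar{\pi}$ are the blowdowns. Taking pull-backs (which reverses the order) gives $\tilde{\varphi}^{*}\circ\bar{\pi}^{*} = \pi^{*}\circ\varphi^{*}$; composing on the left with $\pi_{*}$ and using the standard identity $\pi_{*}\circ\pi^{*} = \operatorname{id}$ on $\operatorname{Div}(\mathbb{P}^1\times\mathbb{P}^1)$ — the push-forward of a total transform recovers the original divisor, since the exceptional contributions are annihilated by $\pi_{*}$ — produces the claimed identity.

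Next I would substitute the given factorizations. Applying the identity to $\bar{D}_{i}$ and inserting $\bar{\pi}^{*}(\bar{D}_{i}) = \sum_{j} n_{i,j}\bar{D}_{i,j}$ followed by $\tilde{\varphi}^{*}(\bar{D}_{i,j}) = \sum_{k} m_{i,j,k} D'_{i,j,k}$ gives, by linearity, $\tilde{\varphi}^{*}(\bar{\pi}^{*}(\bar{D}_{i})) = \sum_{j,k} n_{i,j}\,m_{i,j,k}\, D'_{i,j,k}$ on $\mathcal{X}$. Pushing forward by $\pi_{*}$ sends each non-exceptional $D'_{i,j,k}$ to $D_{i,j,k} = \pi_{*}(D'_{i,j,k}) = V(p_{i,j,k})$ and each exceptional $D'_{i,j,k}$ to $0$ (consistently recorded by $p_{i,j,k}=1$). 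This gives the explicit defining polynomial above, and Proposition~\ref{prop_bir} then completes the argument exactly as in the $\mathbb{P}^1\times\mathbb{P}^1$ case.

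The step I expect to be the main obstacle is the functoriality used to obtain $\tilde{\varphi}^{*}\circ\bar{\pi}^{*} = \pi^{*}\circ\varphi^{*}$: for a composition of genuinely rational (not birational) maps, pull-backs of divisors need not compose, since a map may contract a divisor into the indeterminacy locus of the next one and thereby create or destroy components. The saving feature is that the only merely rational map in the composition is $\varphi$ (equivalently $\tilde{\varphi}$), while $\pi$ and $\bar{\pi}$ are birational morphisms, so no divisor is contracted into an indeterminacy locus. I would make this precise by restricting to the dense open set where $\pi$ and $\bar{\pi}$ are isomorphisms and $\tilde{\varphi}$ genuinely extends $\varphi$, verifying that the codimension-one data of $\varphi^{*}(\bar{D}_{i})$ and of $\pi_{*}\tilde{\varphi}^{*}\bar{\pi}^{*}(\bar{D}_{i})$ agree there, and then accounting separately for the exceptional divisors, all of which die under $\pi_{*}$. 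The one place warranting a short explicit check is the bookkeeping of multiplicities, namely that the $n_{i,j}$ coming from $\bar{\pi}^{*}$ and the $m_{i,j,k}$ coming from $\tilde{\varphi}^{*}$ combine multiplicatively as $n_{i,j}\,m_{i,j,k}$ rather than interfering.
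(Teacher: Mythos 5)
Your overall strategy coincides with the paper's: reduce to Proposition~\ref{prop_bir} via the identity $\varphi^{*}=\pi_{*}\circ\tilde{\varphi}^{*}\circ\bar{\pi}^{*}$ and then do the (routine) multiplicity bookkeeping. The one step that does not survive scrutiny as written is the intermediate identity $\tilde{\varphi}^{*}\circ\bar{\pi}^{*}=\pi^{*}\circ\varphi^{*}$, obtained by ``taking pull-backs'' of $\bar{\pi}\circ\tilde{\varphi}=\varphi\circ\pi$. Pull-backs of divisors along a composition of dominant rational maps only compose when the composition is algebraically stable, i.e.\ when no divisor is contracted by the first map onto an indeterminacy point of the second. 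For $\bar{\pi}\circ\tilde{\varphi}$ this is automatic, since $\bar{\pi}$ is a morphism and has no indeterminacy points; but for $\varphi\circ\pi$ it typically fails, because $\pi$ \emph{does} contract divisors --- the exceptional ones --- and it contracts them precisely onto the blowup points, which are by design indeterminacy points of $\varphi$. So your remark that ``$\pi$ and $\bar{\pi}$ are birational morphisms, so no divisor is contracted into an indeterminacy locus'' is not quite right for $\pi$. Concretely, in Example~\ref{ex:rational} one has $\pi^{*}\varphi^{*}(V(\bar{x}-k))\in\mathcal{H}_{x}+2\mathcal{H}_{y}$ while $\tilde{\varphi}^{*}\bar{\pi}^{*}(V(\bar{x}-k))\in\mathcal{H}_{x}+2\mathcal{H}_{y}-\mathcal{E}_{1}-\mathcal{E}_{2}$: the two sides differ by the exceptional components $E_{1}+E_{2}$, so the asserted equality of divisors is false in general, and applying $\pi_{*}\circ\pi^{*}=\operatorname{id}$ to it is not a valid derivation.

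Fortunately the discrepancy is always supported on exceptional divisors, which $\pi_{*}$ annihilates, so the identity you actually need, $\varphi^{*}=\pi_{*}\circ\tilde{\varphi}^{*}\circ\bar{\pi}^{*}$, does hold; and your closing sentence --- compare the codimension-one data on the dense open set where $\pi$ and $\bar{\pi}$ are isomorphisms and $\tilde{\varphi}$ agrees with $\varphi$, then discard the exceptional components under $\pi_{*}$ --- is exactly the correct repair, and is in substance the paper's appeal to algebraic stability of $\bar{\pi}\circ\tilde{\varphi}$ (Proposition~2.1 of \cite{TEGORS03}, or \cite{DF01}). So the proof goes through once you delete the false intermediate equality and promote your final paragraph from a ``precision check'' to the actual argument.
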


\begin{proof}

	This result follows immediately from Proposition~\ref{prop_bir} and the commutativity of the diagram,
	$\varphi^{*} = \pi_{*}\circ \tilde{\varphi}^{*} \circ \bar{\pi}^{*}$. 
	Such commutativity, however,
	is not automatic and requires that the composition $\bar{\pi} \circ \tilde{\varphi} \circ \pi^{-1}$
	is \emph{algebraically stable}. Recall that a composition $f\circ g$ of dominant rational mappings
	$f\circ g: X\to Y\to Z$, where $X$, $Y$ and $Z$ are rational surfaces, is called algebraically stable if 
	$g^* \circ f^*=(f\circ g)^*$ holds. It is known that algebraic stability is equivalent to the condition that there are no points point $p\in Y$ such that $p$ is simultaneously a critical point of
	$g$, $\dim(g)^{-1}(p)\geqslant 1$, and an indeterminate point of $f$. Since both $\pi$ and
	$\bar{\pi}$ are blowups, this condition holds. We refer to
	 Proposition~2.1 of \cite{TEGORS03} or \cite{DF01} for details.
%
%
%
%
%
\end{proof}

\begin{remark}\qquad
	
\begin{enumerate}[(i)]
\item  An example of multiplicity $n_{i,j}>1$ is when we first blowup the point $\bar{x}_{1}=(0,0)$ 
and next blowup the intersection point of the proper transform of $x=0$ and the exceptional 
divisor $E_{1}$; then 
\begin{equation*}
\bar{\pi}^{*}(V(\bar{x} - \bar{x}_{1})) = (H_{x} - E_{1} - E_{2}) + (E_{1} - E_{2}) + 2 E_{2}\in \mathcal{H}_{x}.
\end{equation*}
\item  If $\tilde{\varphi}$ is an isomorphism, $K_{i,j}=m_{i,j,k}=1$ for all $i,j,k$.
\end{enumerate}
\end{remark}

This proposition gives a way to recover the rational mapping from the action on the divisor group, but not on the Picard group.  
In order to recover the mapping from the action on the Picard group, we need to consider the case where divisor classes are
decomposed into deterministic ones.

There are many more examples of application of this technique for birational mappings in the next sections.
However, we want to emphasize that this approach is quite general. Below we illustrate it by considering
in detail an example of a general rational, but not birational, mapping.

\begin{example}\label{ex:rational}
We start from a rational mapping $\varphi: \P^1\times \P^1\to \P^1\times \P^1$ given by
\begin{equation*}
\varphi:(\bar{x},\bar{y})=\left(\frac{x(y-1)}{y^2}, \frac{x^2}{y} \right).
\end{equation*}
It is easy to see that generically this mapping is three-to-one, and so it is not birational.
Indeed, given $(\bar{x},\bar{y})$, we get $y = x^{2}(\bar{y})^{-1}$. Substituting it into 
the equation for $\bar{x}$, simplifying, and excluding  the special value $x=0$ we get the 
cubic equation $\bar{x} x^{3} - \bar{y} x^{2} + \bar{y}^{2} = 0$.

Using $\bar{x}_{1}=0$, $\bar{x}_{2} = \infty$, $\bar{y}_{1} = 0$, and $\bar{y}_{2}=\infty$, we see that
\begin{alignat*}{2}
	\varphi^{*}(V(\bar{x})) &= V(X_{0}) + V(Y_{0} - Y_{1}) + V(Y_{1}) &&=V(x) + V(y-1) + V(Y),\\
	\varphi^{*}(V(\bar{X})) &= V(X_{1}) + 2 V(Y_{0}) &&=V(X) + 2V(y),\\
	\varphi^{*}(V(\bar{y})) &= 2 V(X_{0}) + V(Y_{1}) &&= 2 V(x) + V(Y),\\
	\varphi^{*}(V(\bar{Y})) &= 2 V(X_{1}) + V(Y_{0}) &&=  2 V(X) + V(y),	 
\end{alignat*}
and from Proposition~\ref{prop_bir} we immediately recover our mapping up to 
normalization constants,
\begin{equation}\label{eq:ex-rat-recov}
\varphi:(\bar{x},\bar{y})=\left( \frac{\lambda_{1} X_{0} (Y_{0} - Y_{1}) Y_{1}}{\lambda_{2} X_{1} Y_{0}^{2}},
\frac{\mu_{1} X_{0}^{2} Y_{1}}{\mu_{2} X_{1}^{2} Y_{0}}  \right)
= \left(C_{1} \frac{x(y-1)}{y^{2}}, C_{2} \frac{x^{2}}{y} \right).
\end{equation}

Let now $\mathcal{X}_{12} = \bar{\mathcal{X}}_{12}$ be a blowup of $\mathbb{P}^{1} \times \mathbb{P}^{1}$ at the points
$A_{1}(0,0)$ and $A_{2}(\infty,\infty)$. The mapping 
$\tilde{\varphi}^{*}: \operatorname{Div}(\bar{\mathcal{X}}_{12})\to \operatorname{Div}({\mathcal{X}}_{12})$
for the divisors $\bar{E}_{i}$, $\bar{H}_{x} - \bar{E}_{i}$, and $\bar{H}_{y} - \bar{E}_{i}$, $i=1,2$,
can be computed directly from the definition of the pull-back. This computation is straightforward, but cumbersome,
and is explained in Appendix~\ref{app:rational}, here we only state the result. We get:
\begin{alignat*}{2}
	\tilde{\varphi}^{*}(\bar{E}_{1}) &= (H_{x} - E_{1}) + (H_{y} - E_{2}), & \quad 
	\tilde{\varphi}^{*}(\bar{H}_{x} - \bar{E}_{2}) &= (H_{y} - E_{1}) + E_{1},\\
	\tilde{\varphi}^{*}(\bar{E}_{2}) &= (H_{x} - E_{2}) + (H_{y} - E_{1}), &\quad 
	\tilde{\varphi}^{*}(\bar{H}_{y} - \bar{E}_{1}) &= (H_{x} - E_{1}) + E_{1},\\
	\tilde{\varphi}^{*}(\bar{H}_{x} - \bar{E}_{1}) &= \pi^{*}(V(y-1)),	 &\quad 
	\tilde{\varphi}^{*}(\bar{H}_{y} - \bar{E}_{2}) &= (H_{x} - E_{2}) + E_{2}.
\end{alignat*}

Then
\begin{align}
	\varphi^{*}(V(\bar{X}_{0})) &= 	\varphi^{*}(V(\bar{x}))
	= (\pi_{*}\circ \tilde{\varphi}^{*}\circ \bar{\pi}^{*})(V(\bar{x}))\label{eq:ex-rat-x0}\\
	&= (\pi_{*}\circ \tilde{\varphi}^{*}) \left((\bar{H}_{x} - \bar{E}_{1}) + \bar{E}_{1}\right)\notag\\
	&= \pi_{*} (\pi^{*}(V(y-1)) + (H_{x} - E_{1}) + (H_{y} - E_{2}) )\notag\\
	&= V(y-1) + \pi_{*}((H_{x} - E_{1}) + E_{1}) + \notag\\
	&\qquad \pi_{*}((H_{y} - E_{2}) + E_{2}) - \pi_{*}(E_{1} + E_{2}) \notag\\
	&= V(y-1) + V(x) + V(Y).\notag
\end{align}
Proceeding in exactly the same way, we get
\begin{align}
	\varphi^{*}(V(\bar{X}_{1})) 
	&= (\pi_{*}\circ \tilde{\varphi}^{*}) \left((\bar{H}_{x} - \bar{E}_{2}) + \bar{E}_{2}\right)\label{eq:ex-rat-x1}\\
	&= \pi_{*}((H_{y} - E_{1}) + (E_{1}) + (H_{x} - E_{2}) + (H_{y} - E_{1}) )\notag\\
	&= V(X) + 2V(y),\notag\\
	\varphi^{*}(V(\bar{Y}_{0})) 
	&= (\pi_{*}\circ \tilde{\varphi}^{*}) \left((\bar{H}_{y} - \bar{E}_{1}) + \bar{E}_{1}\right)\label{eq:ex-rat-y0}\\
	&= \pi_{*}((H_{x} - E_{1}) + (E_{1}) + (H_{x} - E_{1}) + (H_{y} - E_{2}) )\notag\\
	&= 2V(x) + V(Y),\notag\\
	\varphi^{*}(V(\bar{Y}_{1})) 
	&= (\pi_{*}\circ \tilde{\varphi}^{*}) \left((\bar{H}_{y} - \bar{E}_{2}) + \bar{E}_{2}\right)\label{eq:ex-rat-y1}\\
	&= \pi_{*}((H_{x} - E_{2}) + (E_{2}) + (H_{x} - E_{2}) + (H_{y} - E_{1}) )\notag\\
	&= 2V(X) + V(y),\notag
\end{align}
and so we get back equations~\eqref{eq:ex-rat-recov}.


Note that $\tilde{\varphi}^{*}(\bar{H}_{x} - \bar{E}_{1}) = \pi^{*}(V(y-1))\in \mathcal{H}_{y}$,
which is not a deterministic class, and thus $\pi^{*}(V(y-1))\in \operatorname{Div}(\mathcal{X}_{12})$ can not be
specified by its class. To fix this issues we need to blowup at a point on $y=1$.

Thus, let now $\mathcal{X} = \bar{\mathcal{X}}$ be a blowup of $\mathbb{P}^{1} \times \mathbb{P}^{1}$ at the points
$A_{1}(0,0)$, $A_{2}(\infty,\infty)$, and $A_{3}(0,1)$. The mapping 
$\tilde{\varphi}^{*}: \operatorname{Div}(\bar{\mathcal{X}})\to \operatorname{Div}({\mathcal{X}})$
for the divisors that we are interested in can be computed using the same approach as explained in 
Appendix~\ref{app:rational}. We get
\begin{align*}
	\tilde{\varphi}^{*}(\bar{E}_{1}) &= (H_{x} - E_{1} - E_{3}) + (H_{y} - E_{2}) + 2 E_{3}, \\
	\tilde{\varphi}^{*}(\bar{E}_{2}) &= (H_{x} - E_{2}) + (H_{y} - E_{1}), \quad 
	\tilde{\varphi}^{*}(\bar{E}_{3}) = 0,\\
	\tilde{\varphi}^{*}(\bar{H}_{x} - \bar{E}_{1} - \bar{E}_{3}) &= H_{y} - E_{3},\\
	\tilde{\varphi}^{*}(\bar{H}_{x} - \bar{E}_{2}) &= (H_{y} - E_{1}) + E_{1},\\
	\tilde{\varphi}^{*}(\bar{H}_{y} - \bar{E}_{1}) &= (H_{x} - E_{1} - E_{3}) + E_{1},\\
	\tilde{\varphi}^{*}(\bar{H}_{y} - \bar{E}_{2}) &= (H_{x} - E_{2}) + E_{2}.
\end{align*}
Then \eqref{eq:ex-rat-x0} becomes
\begin{align*}
	\varphi^{*}(V(\bar{X}_{0})) &= 	\varphi^{*}(V(\bar{x})) 
	= (\pi_{*}\circ \tilde{\varphi}^{*}\circ \bar{\pi}^{*})(V(\bar{x}))\\
	&= (\pi_{*}\circ \tilde{\varphi}^{*}) \left((\bar{H}_{x} - \bar{E}_{1} - E_{3}) + \bar{E}_{1} + E_{3}\right)\\
	&= \pi_{*} ((H_{y} - E_{3}) + ((H_{x} - E_{1} - E_{3}) + (H_{y} - E_{2}) + 2 E_{3}) + 0 )\\
	&= V(y-1) + V(x) + V(Y),
\end{align*}
and changes in \eqref{eq:ex-rat-x1}--\eqref{eq:ex-rat-y1} are minimal and are omitted.  

\end{example}

Finally, we remark that in Proposition~\ref{prop_blow}, equation \eqref{eq:xup_gen} has the form
\begin{equation*}
\frac{p(\bar{\mathcal{H}}_x-\bar{\mathcal{E}}_1)}{p(\bar{\mathcal{H}}_x-\bar{\mathcal{E}}_2)}=  C \  \frac{\disp \prod_{j=1}^{J_1}\left(\prod_{k=1}^{K_{1,j}} p(D_{1,j,k})^{m_{1,j,k}}\right)^{n_{1,j}} }
{\disp \prod_{j=1}^{J_2}\left(\prod_{k=1}^{K_{2,j}} p(D_{2,j,k})^{m_{2,j,k}}\right)^{n_{2,j}}}.
\end{equation*}
This formula has the following useful generalization, that can be proved along the same lines as 
Propositions~\ref{prop_bir}~and~\ref{prop_blow}.

\begin{proposition}\label{prop_d}
Let $\tilde{\varphi}:\mathcal{X} \to \bar{\mathcal{X}}$ be a dominant rational mapping and let
$\bar{D_1}$ and $\bar{D_2}$ be two linearly equivalent divisors. Assume that 
\begin{equation*}
\bar{D}_i=\sum_{j=1}^{J_{i}}n_{i,j}\bar{D}_{i,j},\quad (i=1,2),\qquad 
\tilde{\varphi}^*(\bar{D}_{i,j}) = \sum_{k}^{K_{i,j}} m_{i,j,k}D_{i,j,k}
\end{equation*}
are decompositions in terms of distinct irreducible divisors, and that 
$\bar{p}_{i,j}(\bar{x},\bar{y})$ and  ${p}_{i,j,k}(x,y)$ are irreducible
defining polynomials for $\bar{\pi}_{*}(\bar{D}_{i,j})$ and $\pi_{*}(D_{i,j,k})$ 
respectively, as in Proposition~\ref{prop_blow}.
Then
\begin{equation}
\frac{\prod\limits_{j=1}^{J_{1}}\bar{p}_{1,j}(\bar{x},\bar{y})}{\prod\limits_{j=1}^{J_{2}}\bar{p}_{2,j}(\bar{x},\bar{y})} = 
C\frac{\prod\limits_{j=1}^{J_{1}} \left(\prod\limits_{k=1}^{K_{1,j}} p_{1,j,k}(x,y)^{m_{1,j,k}}\right)^{n_{1,j}} }{ 
\prod_{j=1}^{J_{2}} \left(\prod\limits_{k=1}^{K_{2,j}} p_{2,j,k}(x,y)^{m_{2,j,k}}\right)^{n_{2,j}}},  
\end{equation}
where $C \in \C$ is again some non-zero proportionality constant.
\end{proposition}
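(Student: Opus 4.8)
The plan is to reduce the asserted identity of rational functions to the single fact that a linear equivalence of divisors is witnessed by one rational function, and then to carry that function through the two birational blowdowns $\pi$, $\bar\pi$ and the pull-back $\tilde\varphi^{*}$, exactly along the lines of the proofs of Propositions~\ref{prop_bir} and~\ref{prop_blow}.

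First I would invoke the hypothesis $\bar D_1\sim\bar D_2$: there is a rational function $\bar g\in\mathbb{C}(\bar{\mathcal{X}})^{\times}$, unique up to a nonzero scalar, with $\operatorname{div}(\bar g)=\bar D_1-\bar D_2$. Since $\bar\pi:\bar{\mathcal{X}}\to\mathbb{P}^1\times\mathbb{P}^1$ is birational, the function fields agree, $\mathbb{C}(\bar{\mathcal{X}})=\mathbb{C}(\bar x,\bar y)$, and computing the divisor of $\bar g$ downstairs via $\bar\pi_{*}$ gives $\operatorname{div}_{\mathbb{P}^1\times\mathbb{P}^1}(\bar g)=\bar\pi_{*}\bar D_1-\bar\pi_{*}\bar D_2$. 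Because $\bar\pi_{*}$ annihilates exceptional components and sends a non-exceptional $\bar D_{i,j}$ to $V(\bar p_{i,j})$ carrying its multiplicity $n_{i,j}$ in $\bar D_i$, this recovers the left-hand side of the claim as the ratio of the corresponding products of the $\bar p_{i,j}$, the exceptional components simply dropping out.

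Next I would pull $\bar g$ back. As $\tilde\varphi$ is dominant rational, $\tilde\varphi^{*}\bar g=\bar g\circ\tilde\varphi$ is a well-defined element of $\mathbb{C}(\mathcal{X})^{\times}=\mathbb{C}(x,y)^{\times}$ and $\operatorname{div}(\tilde\varphi^{*}\bar g)=\tilde\varphi^{*}\operatorname{div}(\bar g)=\tilde\varphi^{*}\bar D_1-\tilde\varphi^{*}\bar D_2$. Substituting the given decompositions $\tilde\varphi^{*}\bar D_{i,j}=\sum_k m_{i,j,k}D_{i,j,k}$ and pushing the result down to $\mathbb{P}^1\times\mathbb{P}^1$ by the birational $\pi$ (again discarding the exceptional $D_{i,j,k}$, for which $p_{i,j,k}=1$) yields exactly the right-hand side; the nested exponents $m_{i,j,k}$ and $n_{i,j}$ are produced by these two successive operations. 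Equating the two affine expressions for the one function $\tilde\varphi^{*}\bar g$ proves the identity, with $C$ absorbing all the scalar ambiguities.

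The only point requiring genuine care, rather than bookkeeping, is the compatibility $\tilde\varphi^{*}\operatorname{div}(\bar g)=\operatorname{div}(\tilde\varphi^{*}\bar g)$ together with the commutation of $\tilde\varphi^{*}$ with the blowdowns, i.e. $\varphi^{*}=\pi_{*}\circ\tilde\varphi^{*}\circ\bar\pi^{*}$. This is not formal: it requires the composition $\bar\pi\circ\tilde\varphi\circ\pi^{-1}$ to be algebraically stable, so that pull-back commutes with composition and no spurious contributions appear along the contracted loci. This is precisely the issue settled in the proof of Proposition~\ref{prop_blow}, where algebraic stability holds because $\pi$ and $\bar\pi$ are blowups, and I would invoke that argument verbatim. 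The rest --- verifying that exceptional components contribute trivial defining polynomials and that the multiplicities combine as written --- is the same routine tracking already performed for the divisors $V(\bar x-\bar x_i)$ and $V(\bar y-\bar y_i)$ in Proposition~\ref{prop_blow}.
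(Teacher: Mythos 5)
Your proof is correct and is exactly the argument the paper intends: the paper omits the proof of this proposition, saying only that it can be proved along the same lines as Propositions~\ref{prop_bir} and~\ref{prop_blow}, and your reduction to the rational function $\bar g$ with $\operatorname{div}(\bar g)=\bar D_1-\bar D_2$, tracked through $\bar\pi_{*}$, $\tilde\varphi^{*}$ and $\pi_{*}$ with the algebraic-stability caveat, is precisely that argument made explicit. (Note only that your derivation naturally produces the exponents $n_{i,j}$ on the left-hand side as well, i.e.\ $\prod_{j}\bar p_{i,j}^{\,n_{i,j}}$, consistent with $\bar D_i=\sum_j n_{i,j}\bar D_{i,j}$; the printed statement drops them, presumably because $n_{i,j}=1$ in all the applications.)
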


%

\section{Deautonomization of a QRT mapping}

\subsection{Brief Review of the QRT mapping} 
\label{sub:ReviewQRT}


A paradigmatic example of an automorphism of a rational elliptic surface is the QRT, or the 
Quispel-Roberts-Thompson mapping. We now give a brief geometric description of this mapping 
following T.~Tsuda \cite{Tsuda04}, see also the recent monograph by J.~Duistermaat \cite{Duistermaat}.

We start with a \emph{bi-quadractic} curve $\Gamma$ on $\mathbb{P}^{1} \times \mathbb{P}^{1}$ which, in the
affine $\mathbb{C}^{2}$-chart, is given by a bi-degree $(2,2)$ polynomial equation that can be compactly
written with the help of a coefficient matrix $\mathbf{A}\in \operatorname{Mat}_{3\times3}(\mathbb{C})$
as
\begin{equation*}
\mathbf{x}^{T} \mathbf{A} \mathbf{y} = 
\begin{bmatrix}
	x^2 & x & 1 
\end{bmatrix}
\begin{bmatrix}
	a_{00} & a_{01} & a_{02} \\			a_{10} & a_{11} & a_{12} \\			a_{20} & a_{21} & a_{22} \\
\end{bmatrix} \begin{bmatrix}
	y^{2} \\ y \\1
\end{bmatrix} = 
\sum_{i,j=0}^{2} a_{ij} x^{2-i} y^{2-j} =0.
\end{equation*}
In general, $\Gamma$ is an \emph{elliptic curve} that can be rewritten in the 
Weierstrass normal form $Y^{2} = 4 X^{3} - g_{2} X - g_{3}$ as explained in Appendix~\ref{app:CFS}. 
Since $\Gamma$ is of bi-degree $(2,2)$, any vertical line $V(x-a)$ intersects $\Gamma$ in two points
$x$ and $\bar{x}$, and any vertical line $V(y-b)$ intersects $\Gamma$ in two points
$y$ and $\bar{y}$, these points are generically distinct. This allows us to define two involutions on $\Gamma$, 
$r_{x}:(x,y)\to (\bar{x},y)$, and $r_{y}:(x,y)\to (x,\bar{y})$, as well as their composition 
$r_{x}\circ r_{y}: (x,y)\to (\bar{x},\bar{y})$. When $\Gamma$ is elliptic, 
the mapping $r_{x}\circ r_{y}$ becomes a translation with respect to the abelian group
structure on $\Gamma$.

The QRT map is an extension of the composition $r_{x}\circ r_{y}$ 
from $\Gamma$ to $\mathbb{P}^{1}\times \mathbb{P}^{1}$. For that, 
we take \emph{two} coefficient matrices $\mathbf{A}, \mathbf{B}\in \operatorname{Mat}_{3\times3}(\mathbb{C})$ and
consider a \emph{pencil} of bi-quadratic curves 
\begin{align}\label{ell_family}
\Gamma_{[\alpha:\beta]}:\ \alpha \mathbf{x}^{T} \mathbf{A} \mathbf{y} + \beta \mathbf{x}^{T} \mathbf{B} \mathbf{y} &=0,
\end{align}
parameterized by $[\alpha:\beta]\in \mathbb{P}^{1}$. 
Then, for a generic point $(x_{*},y_{*})\in \mathbb{P}^{1}\times \mathbb{P}^{1}$, 
there is only one curve in this pencil passing through
that point and the corresponding value of the parameter is 
$[\alpha:\beta] = [\mathbf{x}_{*}^{T} \mathbf{B}\mathbf{y}_{*}:- \mathbf{x}_{*}^{T}\mathbf{A}\mathbf{y}_{*}]$.
The only exceptions are  eight (when counted with multiplicities) \emph{base points}  of the pencil that are given by 
$\mathbf{x}^{T}\mathbf{A}\mathbf{y} = \mathbf{x}^{T}\mathbf{B}\mathbf{y} = 0$.
Resolving these points using the blowup, we get a rational 
elliptic surface $\mathcal{X}$ together with the QRT automorphism $r_{x}\circ r_{y}$ preserving the elliptic fibration 
$\pi: \mathcal{X}\to \mathbb{P}^{1}$, whose fibers $\pi^{-1}([\alpha:\beta])$ are elliptic curves except for 
12 parameter values, again counted with multiplicities, that correspond to \emph{singular fibers}; such fibers are classified by K.~Kodaira 
\cite{Kodaira63} into 22 types.

In this paper we work with a mapping $\varphi$ such that $r_{x} \circ r_{y} = \varphi^{2}$, and so $\varphi$ can be thought of
as a ``half'' of the QRT mapping. To define $\varphi$, we introduce an involution $\sigma_{xy}$ acting on both 
the coordinates and the coefficient matrices as $\sigma_{xy}(x,y;\mathbf{A},\mathbf{B}) = (y,x;\mathbf{A}^{T},\mathbf{B}^{T})$
and put $\varphi = \sigma_{xy}\circ r_{y}$. It is then immediate that $r_{x} \circ r_{y} = \varphi^{2}$. 
Explicitly, substituting $[\alpha:\beta] = [\mathbf{x}^{T} \mathbf{B}\mathbf{y}:- \mathbf{x}^{T}\mathbf{A}\mathbf{y}]$
into the equation
\begin{equation*}
\alpha \mathbf{x}^{T} \mathbf{A} \bar{\mathbf{y}} + \beta \mathbf{x}^{T} \mathbf{B} \bar{\mathbf{y}}=0
\end{equation*}
and simplifying, we get the equation $(\bar{y} - y)(f_{3}(x) y \bar{y} - f_{2}(x) (y + \bar{y}) + f_{1}(x)) = 0$,
where $[f_{1}(x),f_{2}(x),f_{3}(x)] = 
(\mathbf{x}^{T} \mathbf{A})\times (\mathbf{x}^{T}\mathbf{B})$. Solving the second factor for $\bar{y}$ to get 
$r_{y}:(x,y)\mapsto (x,\bar{y})$ and then applying
$\sigma_{xy}$ we get the following expression for the mapping $\varphi: (x,y)\to (\bar{x},\bar{y})$:
%
%
%
%
\begin{align}\label{eq:generic-phi}
\left\{\ba{rcl}
\bar{x}&=&\disp \frac{f_1(x)-f_2(x)y}{f_2(x)-f_3(x)y},\\
\bar{y}&=&x.
\ea\right.\,
\end{align} 


\subsection{Singular fibers and blowups}

We consider in detail a particular example of the QRT mapping (generic case is outlined in Appendix~\ref{app:deautoQRT}) defined by the 
following symmetric matrices $\mathbf{A}$ and $\mathbf{B}$: 
\begin{align*}
\mathbf{A}=\begin{bmatrix}0&0&0\\0&1&0\\0&0&0\end{bmatrix}, \quad
\mathbf{B}=\begin{bmatrix}1&a+a^{-1}&1\\a+a^{-1}&0&-a-a^{-1}\\1&-a-a^{-1}&1\end{bmatrix},
\end{align*}
where $a\neq 0,\pm 1$ is a constant.

\begin{figure}[h]
	\centering
	\begin{tikzpicture}[>=stealth, 
		elt/.style={circle,draw=red!100, fill=red!100, thick, inner sep=0pt,minimum size=1.5mm}]
		\draw[black] (1,0.8) -- (4.5,0.8);  \draw[black] (1,3.2) -- (4.5,3.2);
		\draw[black] (1.3,0.2) -- (1.3,3.8); \draw[black] (4.2,0.2) -- (4.2,3.8);
		\node at (0.5,0.8) {$y = 0 \quad $}; 
		\node at (0.5,3.2) {$y = \infty \quad $}; 
		\node at (1.3,0) {$x = 0$}; 
		\node at (4.2,0) {$x = \infty$}; 
		\node[style=elt] (p1) at (1.3,1.5) {}; 	\node [left] at (p1) 	{\small $p_{1}$};
		\node[style=elt] (p2) at (1.3,2.5) {};	\node [left] at (p2)	{\small $p_{2}$};
		\node[style=elt] (p3) at (1.9,0.8) {}; 	\node [below] at (p3) 	{\small $p_{3}$};
		\node[style=elt] (p4) at (3.4,0.8) {};	\node [below] at (p4) 	{\small $p_{4}$};
		\node[style=elt] (p5) at (4.2,1.5) {}; 	\node [right] at (p5) 	{\small $p_{5}$};
		\node[style=elt] (p6) at (4.2,2.5) {};	\node [right] at (p6) 	{\small $p_{6}$};
		\node[style=elt] (p7) at (1.9,3.2) {}; 	\node [above] at (p7) 	{\small $p_{7}$};
		\node[style=elt] (p8) at (3.4,3.2) {};	\node [above] at (p8) 	{\small $p_{8}$};
		\draw[thick, black,<-,dashed] (5,2) -- (6.5,2);	
		\begin{scope}[xshift=8cm]
		\draw[black,very thick] (0,0.8) -- (4,0.8);  \draw[black,very thick] (0,3.2) -- (4,3.2);
		\draw[black,very thick] (0.3,0.5) -- (0.3,3.5); \draw[black,very thick] (3.7,0.5) -- (3.7,3.5);
		\node [left] at (0,0.8) {${H}_{y} - {E}_{3} - {E}_{4}$};
		\node [left] at (0,3.2) {${H}_{y} - {E}_{7} - {E}_{8}$};		
		\node at (0,0) {${H}_{x} - {E}_{1} - {E}_{2}$};		
		\node at (3.2,0) {${H}_{x} - {E}_{5} - {E}_{6}$};		
		\node[style=elt] (E1) at (0.3,1.5) {}; 	\draw[red,thick] (0.0,1.2)--(0.6,1.8); 
		\node [left] at (E1.north west) {\small ${E}_{1}$};
		\node[style=elt] (E2) at (0.3,2.5) {};	\draw[red,thick] (0.0,2.2)--(0.6,2.8); 
		\node [left] at (E2.north west) {\small ${E}_{2}$};	
		\node[style=elt] (E3) at (1.3,0.8) {}; \draw[red,thick] (1.0,0.5)--(1.6,1.1); 
		\node [above] at (E3.north west) {\small ${E}_{3}$};			
		\node[style=elt] (E4) at (2.7,0.8) {};	\draw[red,thick] (2.4,0.5)--(3.0,1.1); 
		\node [above] at (E4.north west) {\small${E}_{4}$};				
		\node[style=elt] (E5) at (3.7,1.5) {}; \draw[red,thick] (3.4,1.2)--(4.0,1.8); 
		\node [right] at (E5.south east) {\small ${E}_{5}$};	
		\node[style=elt] (E6) at (3.7,2.5) {};	\draw[red,thick] (3.4,2.2)--(4.0,2.8); 
		\node [right] at (E6.south east) {\small ${E}_{6}$};		
		\node[style=elt] (E7) at (1.3,3.2) {}; \draw[red,thick] (1.0,2.9)--(1.6,3.5);	
		\node [below] at (E7.south east) {\small ${E}_{7}$};				
		\node[style=elt] (E8) at (2.7,3.2) {};	\draw[red,thick] (2.4,2.9)--(3.0,3.5);	
		\node [below] at (E8.south east) {\small ${E}_{8}$};			
		\end{scope}
		\end{tikzpicture}.
	\caption{Okamoto Space of Initial Conditions for the QRT Map~\eqref{eq:phi-map}.}
	\label{fig:QRT-surface}
\end{figure}

Put $k=\alpha/\beta$, then the corresponding pencil of bi-quadratic curves 
(or the elliptic fibration) is given by 
\begin{align}\label{exy}
\Gamma_{k}: k xy+ \big(xy+a^{-1}(x+y)-1\big)\big(xy+a(x+y)-1\big)=0. 
\end{align}
The base points 
of this pencil on $\mathbb{P}^{1} \times \mathbb{P}^{1}$ are
$p_1(0,a)$, $p_2(0,a^{-1})$, $p_3(a,0)$, $p_4(a^{-1},0)$, 
$p_5(\infty, -a)$,  $p_6(\infty, -a^{-1})$,  $p_7(-a, \infty)$, and $p_8(-a^{-1}, \infty)$. Blowing them up 
we obtain a rational surface $\mathcal{X}$, as shown on Figure~\ref{fig:QRT-surface}.

\begin{figure}[h]
	\centering
	\begin{tikzpicture}[>=stealth, 
		elt/.style={circle,draw=red!100, fill=red!60, thick, inner sep=0pt,minimum size=2mm}]
		\draw[blue,very thick] (-0.5,0) -- (7.5,0) (-0.5,6) -- (7.5,6) (0,-0.5) -- (0,6.5) (7,-0.5) -- (7,6.5);
		\draw[olive,thick] (7.2,1.5) --  (1,1.5) .. controls (0.6,1.5) and (0.4,1.8) .. (0.3,2.0);
		\draw[olive,thick] (-0.2,2.5) --  (6,2.5) .. controls (6.4,2.5) and (6.6,2.3) .. (6.7,2.1);
		\draw[olive,thick] (7.2,3.5) --  (1,3.5) .. controls (0.6,3.5) and (0.4,3.8) .. (0.3,4.0);
		\draw[olive,thick] (-0.2,4.5) --  (6,4.5) .. controls (6.4,4.5) and (6.6,4.3) .. (6.7,4.1);
		\draw[red,thick] (-0.7,1.2)--(0.7,1.8) (-0.7,3.2)--(0.7,3.8) (6.4,2.2)--(7.6,2.8) (6.4,4.2)--(7.6,4.8);
		\draw[red,thick] (1.8,-0.5)--(2.2,0.5) (4.3,-0.5)--(4.7,0.5) (4.8,5.5)--(5.2,6.5) (2.8,5.5)--(3.2,6.5);
		\draw[dashed,->] (0.1,0.8)--(0.8,0.1); 	\draw[dashed,->] (6.2,0.1)--(6.9,0.8);
		\draw[dashed,->] (6.9,5.1)--(6.2,5.9);	\draw[dashed,->] (0.8,5.9)--(0.1,5.1);
		\draw[dashed,->] (0.1,3.4)--(1.9,0.1);	\draw[dashed,->] (0.1,1.4)--(4.4,0.1);
		\draw[dashed,->] (6.9,4.6)--(3.1,5.9);	\draw[dashed,->] (6.9,2.6)--(5.1,5.9);
		\draw[dashed,->] (2,0)--(2,1.5); \draw[dashed,->] (4.5,0)--(4.5,3.5);
		\draw[dashed,->] (3,6)--(3,2.5); \draw[dashed,->] (5,6)--(5,4.5);
		\node [below] at (-1,-0.5) {$H_{x} - E_{1} - E_{2}$};
		\node [below] at (7,-0.5) {$H_{x} - E_{5} - E_{6}$};
		\node [left] at (-0.5,0) {$H_{y} - E_{3} - E_{4}$};
		\node [left] at (-0.5,6) {$H_{y} - E_{7} - E_{8}$};
		\node [right] at (7.2,1.5) {$H_{y} - E_{1}$};		
		\node [right] at (7.2,3.5) {$H_{y} - E_{2}$};		
		\node [left] at (-0.2,2.5) {$H_{y} - E_{5}$};		
		\node [left] at (-0.2,4.5) {$H_{y} - E_{6}$};		
		\node[style=elt] (p1) at (0,1.5) {}; 	\node [above left] at (p1) {\small $p_{1}$};
		\node [left] at (-0.6,1.2) {$E_{1}$};
		\node[style=elt] (p2) at (0,3.5) {}; 	\node [above left] at (p2) {\small $p_{2}$};
		\node [left] at (-0.6,3.2) {$E_{2}$};
		\node[style=elt] (p3) at (2,0) {}; 	\node [below right] at (p3) {\small $p_{3}$};
		\node [below] at (1.8,-0.5) {$E_{3}$};
		\node[style=elt] (p4) at (4.5,0) {}; 	\node [below right] at (p4) {\small $p_{4}$};
		\node [below] at (4.3,-0.5) {$E_{4}$};
		\node[style=elt] (p5) at (7,2.5) {}; 	\node [below right] at (p5) {\small $p_{5}$};
		\node [right] at (7.5,2.8) {$E_{5}$};
		\node[style=elt] (p6) at (7,4.5) {}; 	\node [below right] at (p6) {\small $p_{6}$};
		\node [right] at (7.5,4.8) {$E_{6}$};
		\node[style=elt] (p7) at (3,6) {}; 	\node [above left] at (p7) {\small $p_{7}$};
		\node [above] at (3.2,6.5) {$E_{7}$};
		\node[style=elt] (p8) at (5,6) {}; 	\node [above left] at (p8) {\small $p_{8}$};
		\node [above] at (5.2,6.5) {$E_{8}$};
		\end{tikzpicture}.	
	\caption{Action of $\varphi_{*}$ on $\operatorname{Div}(\mathcal{X})$.}
	\label{fig:phi-action}
\end{figure}

The expression \eqref{eq:generic-phi} 
for the ``half'' of the QRT mapping $\varphi=\sigma_{xy}\circ r_y$ becomes
\begin{equation}\label{eq:phi-map}
\varphi:(x,y)\mapsto \left(\frac{(x-a)(x-a^{-1})}{y(x+a)(x+a^{-1})},\ x\right).
\end{equation}
This mapping preserves the elliptic fibration~\eqref{exy} and lifts to an automorphism $\varphi$ of the surface $\mathcal{X}$,
$\varphi:\mathcal{X}\to \bar{\mathcal{X}}(=\mathcal{X})$. 
On the Picard lattice 
$\operatorname{Pic}(\mathcal{X}) = \operatorname{Span}_{\mathbb{Z}}\{ \mathcal{H}_{x}, \mathcal{H}_{y},\mathcal{E}_{1},\dots,\mathcal{E}_{8} \}$, 
the push-forward and the pull-back actions of the mapping are given by
\begin{align}
\varphi_*: &\begin{array}{l}
\mathcal{H}_x\mapsto \bar{\mathcal{H}}_y,\quad 
\mathcal{H}_y\mapsto \bar{\mathcal{H}}_x+2\bar{\mathcal{H}}_y-\bar{\mathcal{E}}_1-\bar{\mathcal{E}}_2-\bar{\mathcal{E}}_5-\bar{\mathcal{E}}_6,\\
\mathcal{E}_1\mapsto \bar{\mathcal{E}}_4,\quad 
\mathcal{E}_2\mapsto \bar{\mathcal{E}}_3,\quad
\mathcal{E}_3\mapsto \bar{\mathcal{H}}_y-\bar{\mathcal{E}}_1,\quad
\mathcal{E}_4\mapsto \bar{\mathcal{H}}_y-\bar{\mathcal{E}}_2,\\
\mathcal{E}_5\mapsto \bar{\mathcal{E}}_8,\quad
\mathcal{E}_6\mapsto \bar{\mathcal{E}}_7,\quad
\mathcal{E}_7\mapsto \bar{\mathcal{H}}_y-\bar{\mathcal{E}}_5,\quad
\mathcal{E}_8\mapsto \bar{\mathcal{H}}_y-\bar{\mathcal{E}}_6
\end{array}
\label{eq:varphi_*-map}
\end{align}
\begin{align}
\varphi^*: &\begin{array}{l}
\bar{\mathcal{H}}_x \mapsto  2\mathcal{H}_x+\mathcal{H}_y-\mathcal{E}_3-\mathcal{E}_4-\mathcal{E}_7-\mathcal{E}_8, \quad \bar{\mathcal{H}}_y\mapsto \mathcal{H}_x\\
\bar{\mathcal{E}}_1\mapsto \mathcal{H}_x-\mathcal{E}_3,\quad \bar{\mathcal{E}}_2\mapsto \mathcal{H}_x-\mathcal{E}_4,\quad \bar{\mathcal{E}}_3\mapsto \mathcal{E}_2,\quad  \bar{\mathcal{E}}_4\mapsto \mathcal{E}_1\\
\bar{\mathcal{E}}_5\mapsto \mathcal{H}_x-\mathcal{E}_7,\quad \bar{\mathcal{E}}_6\mapsto \mathcal{H}_x-\mathcal{E}_8,\quad \bar{\mathcal{E}}_7\mapsto \mathcal{E}_6,\quad  \bar{\mathcal{E}}_8\mapsto \mathcal{E}_5. \end{array}
\label{eq:varphi^*-map}
\end{align}

The push-forward map $\varphi_{*}$ on $\operatorname{Div}(\mathcal{X})$ can also be described by the diagram on 
Figure~\ref{fig:phi-action}.

\begin{figure}[h]
	\centering
\begin{tikzpicture}[>=stealth,
	elt/.style={circle,draw=purple!100, fill=purple!60, thick, inner sep=0pt,minimum size=2mm}]
	\draw[black,thick,->] (-1,0) -- (13,0);
	\node at (13.5,0) {$k$};
	\node[style=elt] (p0) at (0,0) {}; \node [below] at (p0.south) {$0$};
	\node[style=elt] (p1) at (2,0) {}; \node [below] at (p1.south) {\scriptsize $4 \mathfrak{i} (a + a^{-1})$};
	\node[style=elt] (p2) at (4,0) {}; \node [below] at (p2.south) {\scriptsize \quad $-4 \mathfrak{i} (a + a^{-1})$};
	\node[style=elt] (p3) at (7,0) {}; \node [below] at (p3.south) {\scriptsize $(a-a^{-1})^{2}$};
	\node[style=elt] (p4) at (9,0) {}; \node [below] at (p4.south) {\scriptsize \quad $-(a-a^{-1})^{2}$};
	\node[style=elt] (p5) at (11,0) {}; \node [below] at (p5.south) {$\infty$};
	\draw[black, thick,dotted,rounded corners] (-1,5) -- (-1,8) -- (13,8) -- (13,2) -- (-1,2) -- (-1,5);
	\draw[red, thick, ->] (5.5,1.8) -- (5.5,0.2); \node[red] at (5.7,1) {$\pi$};
	\draw[red, dashed, ->] (0,1.8) -- (0,0.2); 	\draw[red, dashed, ->] (11,1.8) -- (11,0.2);
	\draw[red, dashed, ->] (8,1.8) -- (8,1) -- (7,0.2);	\draw[red, dashed, ->] (8,1) -- (9,0.2);
	\draw[red, dashed, ->] (3,1.8) -- (3,1) -- (2,0.2);	\draw[red, dashed, ->] (3,1) -- (4,0.2);
	\begin{scope}[xshift=5.5cm,yshift=5cm]
	\draw[thick,purple] (0,0) ellipse (0.8cm and 2cm);
	\node at (0,-2.5) {generic $A_{0}^{(1)}$};
	\end{scope}
	\begin{scope}[xshift=0cm,yshift=3cm]
	\draw[thick,purple] (-0.5,0) .. controls (1,1) and (1,3) .. (-0.5,4);
	\draw[thick,purple] (0.5,0) .. controls (-1,1) and (-1,3) .. (0.5,4);
	\node at (0,-0.5) {$A_{1}^{(1)}$};
	\end{scope}
	\begin{scope}[xshift=8cm,yshift=3cm]
	\draw[thick,purple] (-0.5,0) .. controls (1,1) and (1,3) .. (-0.5,4);
	\draw[thick,purple] (0.5,0) .. controls (-1,1) and (-1,3) .. (0.5,4);
	\node at (0,-0.5) {$A_{1}^{(1)}$};
	\end{scope}
	\begin{scope}[xshift=3cm,yshift=3cm]
	\draw[thick,purple] (-0.5,4) .. controls (1.5,1) and (1,0) .. (0,0) .. controls (-1,0) and (-1.5,1) .. (0.5,4);
	\node at (0,-0.5) {$A_{0}^{(1)*}$};
	\end{scope}
	\begin{scope}[xshift=11cm,yshift=3cm]
	\draw[thick,purple] (-0.5,0) -- (1.5,2.5) (-1.5,1.5) -- (0.5,4);
	\draw[thick,purple] (0.5,0) -- (-1.5,2.5) (1.5,1.5) -- (-0.5,4);
	\node at (0,-0.5) {$A_{3}^{(1)}$};
	\end{scope}
	\node at (5.5,7.5) {$\mathbb{P}^{1} \times  \mathbb{P}^{1}$};
	\end{tikzpicture}	
	\caption{Different Types of Fibers of the QRT Map.}
	\label{fig:fibers}
\end{figure}

In order to see the singular fibers of our elliptic fibration, we put curves $\Gamma_{k}$ 
in this pencil to the Weierstrass normal form using Schwartz's algorithm (see Appendix~\ref{app:CFS}) 
and compute the elliptic discriminant $\Delta$. For our choice of the matrices 
$\mathbf{A}$ and $\mathbf{B}$  singular fibers appear at the parameter values
$k=0$, $k=\pm4i(a+a^{-1})$, $k=\pm(a-a^{-1})$, and $k=\infty$, 
as shown on Figure~\ref{fig:fibers}.

Choosing fibers of different type we obtain different deautonomizations of the same mapping $\varphi$, as we
describe in the next few subsections.

\subsection{Smooth fiber of type $A_{0}^{(1)}$}\label{Sect_generic}  
For a generic value of $k$, $\Gamma_{k}$ given by \eqref{exy} is a smooth elliptic curve. 
For this choice of a generic fiber the surfaces in the family $\mathcal{X}_A$ are of type
$A_{0}^{(1)}$ and the symmetry group of the family is the affine Weyl group $W(E_8^{(1)})$.
It is generated by reflections $w_{i}$ defined by simple roots $\alpha_{i}$, where 
$w_{i}$ act on $\operatorname{Pic}(\mathcal{X})$ in the usual way, for $\mathcal{D} \in \operatorname{Pic}(\mathcal{X})$,
$w_i^{*}(\bar{\mathcal{D}})= \mathcal{D}+ (\alpha_i\bullet\mathcal{D})\alpha_i$. 
We take simple roots $\alpha_{i}$ to be the following:
\begin{equation}\label{eq:a8roots}
\begin{aligned}
	\alpha_0&=\mathcal{E}_1-\mathcal{E}_2, & 
	\alpha_1&=\mathcal{H}_x-\mathcal{H}_y,  &	
	\alpha_2&=\mathcal{H}_y-\mathcal{E}_1-\mathcal{E}_2, \\
	\alpha_3& =\mathcal{E}_2-\mathcal{E}_3,&
	\alpha_4&=\mathcal{E}_3-\mathcal{E}_4, &
	\alpha_5&=\mathcal{E}_4-\mathcal{E}_5, \\
	\alpha_6&=\mathcal{E}_5-\mathcal{E}_6, &	
	\alpha_7&=\mathcal{E}_6-\mathcal{E}_7, &
	\alpha_8&=\mathcal{E}_7-\mathcal{E}_8.
	\end{aligned} 	
\end{equation}
It is easy to check that the configuration of these roots $\alpha_{i}$ is described by the affine Dynkin diagram $E_8^{(1)}$,
as shown on Figure~\ref{fig:dyn-e8}, where we also indicated the coefficients of the representation of the 
null root vector $\delta$, which corresponds to the class of the anti-canonical divisor $-\mathcal{K}_{\mathcal{X}}$, 
for the $E_{8}^{(1)}$ Cartan matrix in terms of the simple roots $\alpha_{i}$,
\begin{equation*}
\delta=-\mathcal{K}_{\mathcal{X}}=3\alpha_0+2\alpha_1+4\alpha_2+6\alpha_3+3\alpha_6+5\alpha_4+4\alpha_5+2\alpha_7+\alpha_8.
\end{equation*}

\begin{figure}[ht]
	\centering
	\begin{tikzpicture}[>=stealth, elt/.style={circle,draw=black!100,thick, inner sep=0pt,minimum size=2mm,baseline=-20pt}]
	\path 	( -2,0) 	node 	(D1) [elt] {}
	        ( -1,0) 	node 	(D2) [elt] {}
	        ( 0,0) 		node  	(D3) [elt] {}
	        ( 1,0) 		node 	(D4) [elt] {}
	        ( 2,0) 		node 	(D5) [elt] {}
	        ( 3,0) 		node 	(D6) [elt] {}
	        ( 4,0) 		node 	(D7) [elt] {}
	        ( 5,0) 		node 	(D8) [elt] {}
	        ( 0,1) 		node 	(D0) [elt] {};
	\draw [black,line width=1pt ] (D1) -- (D2) -- (D3) -- (D4) -- (D5) -- (D6) -- (D7) -- (D8);
	\draw [black,line width=1pt ] (D3) -- (D0);

	\node[below] at (D1.south) {$\alpha_{1}$};
	\node[below] at (D2.south) {$\alpha_{2}$};
	\node[below] at (D3.south) {$\alpha_{3}$};
	\node[below] at (D4.south) {$\alpha_{4}$};
	\node[below] at (D5.south) {$\alpha_{5}$};
	\node[below] at (D6.south) {$\alpha_{6}$};
	\node[below] at (D7.south) {$\alpha_{7}$};
	\node[below] at (D8.south) {$\alpha_{8}$};
	\node[above] at (D0.north) {$\alpha_{0}$};
	
						\node[red,above] at (D1.east) {\small $\ 2$};
						\node[red,above] at (D2.east) {\small $\ 4$};
						\node[red,above] at (D3.east) {\small $\ 6$};
						\node[red,above] at (D4.east) {\small $\ 5$};
						\node[red,above] at (D5.east) {\small $\ 4$};
						\node[red,above] at (D6.east) {\small $\ 3$};
						\node[red,above] at (D7.east) {\small $\ 2$};
						\node[red,above] at (D8.east) {\small $\ 1$};
						\node[red,below] at (D0.east) {\small $\ 3$};
	
		\end{tikzpicture}	
		\caption{Affine Dynkin diagram $E_{8}^{(1)}$, the nodes represent $\alpha_{i}\bullet\alpha_{i} = -2$ 
		and nodes corresponding to $\alpha_{i}$ and $\alpha_{j}$ are connected with an edge when 
		$\alpha_{i}\bullet\alpha_{j} = 1$.
		}
	\label{fig:dyn-e8}
\end{figure}

The action~\eqref{eq:varphi^*-map} of $\varphi^{*}$, when restricted on the symmetry root sublattice
$Q = Q(E_{8}^{(1)}) = \operatorname{Span}_{\mathbb{Z}}\{\alpha_{0},\dots,\alpha_{8}\} = (- \mathcal{K}_{\mathcal{X}})^{\perp}$, is 
\begin{align}
\bar{\alpha}_0&\mapsto-\alpha_4\notag\\
\bar{\alpha}_1&\mapsto2\alpha_0+\alpha_1+2\alpha_2+4\alpha_3+3\alpha_4+2\alpha_5+2\alpha_6+2\alpha_7+\alpha_8\notag\\
\bar{\alpha}_2&\mapsto-\alpha_0-\alpha_1-\alpha_2-2\alpha_3-\alpha_4\notag\\
\bar{\alpha}_3&\mapsto\alpha_0+\alpha_1+\alpha_2+\alpha_3+ \alpha_{4}\notag\\
\bar{\alpha}_4&\mapsto-\alpha_0\\
\bar{\alpha}_5&\mapsto-\alpha_1-\alpha_2-\alpha_3-\alpha_4-\alpha_5-\alpha_6-\alpha_7\notag\\
\bar{\alpha}_6&\mapsto-\alpha_8\notag\\
\bar{\alpha}_7&\mapsto\alpha_0+\alpha_1+\alpha_2+2\alpha_3+2\alpha_4+2\alpha_5+2\alpha_6+\alpha_7+\alpha_8\notag\\
\bar{\alpha}_8&\mapsto-\alpha_6.\notag
\end{align}
Thus, $\varphi^{*}$ is not a translation on $Q$ but we can check that $(\varphi^{4})^{*}$ is:
\begin{equation}\label{eq:f4-act}
(\varphi^{4})^{*}:(\bar{\alpha}_0,...,\bar{\alpha}_8)\mapsto
(\alpha_0,...,\alpha_8)+(0,2,-2,1,0,-1,0,1,0)\delta.	
\end{equation}	




The mapping $\varphi$ induces the mapping $\varphi_{*} = (\varphi^{*})^{-1}$ on the Picard lattice. Using the 
standard technique of Lemma~3.11 of \cite{Kac}, we can represent $\varphi_{*}$ in terms of the generators
of $W(E_{8}^{(1)})$ as
\begin{align}
\varphi_{*}=& 	w_{6} w_{5} w_{4}\,  w_{3} w_{2} w_{0}\,  w_{3} w_{4} w_{5}\,  
				w_{6} w_{7} w_{8}\,  w_{5} w_{4} w_{3}\,  w_{2} w_{0} w_{3}\,  w_{4} w_{5} w_{6} \notag\\ 
			&	w_{7} w_{6} w_{0}\,  w_{2} w_{3} w_{4}\,  w_{0} w_{3} w_{1}\, w_{0}.
\label{decomp_E8}\end{align}
One way to perform the deautonomization procedure is to find the birational representation of the 
affine Weyl group (we give an example of how to do it in Appendix~\ref{app:bir-rep}), and take the composition
of these birational maps. However, the following issues arise.
\begin{enumerate}[(i)]\itemsep0em
\item We need to construct birational transformations for all of the generators.
\item We need to find the decomposition of $\varphi$ in terms of the generators of the group, as in \eqref{decomp_E8}.
\item Most significantly, we expect the resulting expression for the mapping to be very complicated and most likely it 
will not be written in a simple or factorized form.
\end{enumerate}
Therefore we use a different approach to deautonomization that is based on the factorization formulae of 
Section~\ref{SectFactorization}. 

Let us pick $\mathbf{a}\in \mathcal{A}$ and consider the corresponding 
surface $\mathcal{X}_{\mathbf{a}}$ that is obtained from $\mathbb{P}^{1} \times  \mathbb{P}^{1}$ by 
blowing up at eight points $(x_{i},y_{i})$ that lie on an elliptic curve $\mathcal{C}_{\mathbf{a}}$;
for now our parameters are the coordinates of the blowup points. There are $16$ of them, but we 
can use the $6$-parameter group  of M\"obius transformations 
$\operatorname{\mathbf{PGL}}_{2}(\mathbb{C}) \times  \operatorname{\mathbf{PGL}}_{2}(\mathbb{C})$ 
acting on each of the factors to normalize some of the parameters, and so the true number of parameters is $10$, 
as it should be. For our normalization, we use shifts to ensure that 
\begin{equation}\label{eq:norm-pts}
x_1=0,\qquad x_5=\infty,\qquad y_4=0,\qquad y_8=\infty,	
\end{equation}
exactly as it was in the autonomous case. This still 
leaves us with two free dilation parameters that we fix later. At this point we 
are able to apply Proposition~\ref{prop_blow} to obtain an explicit expressions for the deautonomized mapping. 
We have
\begin{align*}
	\varphi^{*}(V(\bar{x} - 0)) &= \pi_{*}\circ \tilde{\varphi}^{*}\circ \bar{\pi}^{*} (V(\bar{x} - 0))
	= \pi_{*} \circ \tilde{\varphi}^{*}((\bar{H}_{x} - \bar{E}_{1}) + \bar{E}_{1}) \\
	&= \pi_{*} ((H_{x} + H_{y} - E_{4} - E_{7} - E_{8}) + (H_{x} - E_{3}))\\
	&= V((x_4 - x_7) (x - x_8) y - (x_8 - x_7)(x - x_4) y_7) + V(x - x_{3}),\\
	\varphi^{*}(V(\bar{x} - \infty)) &= \pi_{*}\circ \tilde{\varphi}^{*}\circ \bar{\pi}^{*} (V(\bar{x} - \infty))
	= \pi_{*} \circ \tilde{\varphi}^{*}((\bar{H}_{x} - \bar{E}_{5}) + \bar{E}_{5}) \\
	&= \pi_{*} ((H_{x} + H_{y} - E_{3} - E_{4} - E_{8}) + (H_{x} - E_{7}))\\
	&= V((x_4 - x_3) (x - x_8) y - (x_8 - x_3) (x - x_4) y_3) + V(x - x_{7}),\\
	\varphi^{*}(V(\bar{y} - 0)) &= \pi_{*}\circ \tilde{\varphi}^{*}\circ \bar{\pi}^{*} (V(\bar{y} - 0))
	= \pi_{*} \circ \tilde{\varphi}^{*}((\bar{H}_{y} - \bar{E}_{4}) + \bar{E}_{4}) \\	
	&= \pi_{*} ((H_{x} - E_{1}) + (E_{1})) = V(x-0),\\
	\varphi^{*}(V(\bar{y} - \infty)) &= \pi_{*}\circ \tilde{\varphi}^{*}\circ \bar{\pi}^{*} (V(\bar{y} - \infty))
	= \pi_{*} \circ \tilde{\varphi}^{*}((\bar{H}_{y} - \bar{E}_{8}) + \bar{E}_{8}) \\	
	&= \pi_{*} ((H_{x} - E_{5}) + (E_{5})) = V(x-\infty).
\end{align*}
Thus, the deautonomized mapping is given by 
\begin{equation}\label{eq:ell_xyup}
	\left\{ 
	\begin{aligned}
		\bar{x} &= C_{1} \frac{(x-x_{3})}{(x-x_{7})}\cdot 
			\frac{(x_4 - x_7) (x - x_8) y - (x_8 - x_7)(x - x_4) y_7}{(x_4 - x_3) (x - x_8) y - (x_8 - x_3) (x - x_4) y_3},  \\
		\bar{y} &= C_{2} x,	
	\end{aligned}
	\right.
\end{equation}
and we can use the remaining dilation parameters to set, for example, $C_{1} = C_{2} = 1$.

\begin{remark}\label{rem:det-form}
	In making these computations the following observation is convenient. Expression of the form
	$\pi_{*} (H_{x} + H_{y} - E_{i} - E_{j} - E_{k})$ corresponds to a bi-degree $(1,1)$ curve
	passing through the points $p_{i}$, $p_{j}$, and $p_{k}$. The affine defining polynomial of such curve can 
	be computed using the following simple construction. To allow for coordinates of points to be 
	infinite, let $p_{i}$ have homogeneous coordinates
	$([X_{0}^{(i)}:X_{1}^{(i)}],[Y_{0}^{(i)}:Y_{1}^{(i)}])$. Then the defining polynomial $p(x,y)$
		can be expressed in the determinantal form as
	\begin{equation*}
	p(x,y) = \begin{vmatrix}
		1 & x & y & xy \\
		X_{1}^{(i)} Y_{1}^{(i)} & X_{0}^{(i)} Y_{1}^{(i)} & X_{1}^{(i)} Y_{0}^{(i)} & X_{0}^{(i)} Y_{0}^{(i)} \\
		X_{1}^{(j)} Y_{1}^{(j)} & X_{0}^{(j)} Y_{1}^{(j)} & X_{1}^{(j)} Y_{0}^{(j)} & X_{0}^{(j)} Y_{0}^{(j)} \\
		X_{1}^{(k)} Y_{1}^{(k)} & X_{0}^{(k)} Y_{1}^{(k)} & X_{1}^{(k)} Y_{0}^{(k)} & X_{0}^{(k)} Y_{0}^{(k)} \\	
	\end{vmatrix}.	
	\end{equation*}
	In particular, for $\pi_{*} (H_{x} + H_{y} - E_{4} - E_{7} - E_{8})$ with 	
	$p_{4}([x_{4}:1],[0:1])$, 	$p_{7}([x_{7}:1],[y_{7}:1])$, and	$p_{8}([x_{8}:1],[1:0])$,
	we get
	\begin{equation*}
	p(x,y) 
	= \begin{vmatrix}
		1 & x & y & xy \\
		1 & x_{4} & 0 & 0 \\
		1 & x_{7} & y_{7} & x_{7} y_{7} \\
		0 & 0 & 1 & x_{8}	
	\end{vmatrix}
	 =(x_8 - x_7)(x - x_4) y_7 -(x_4 - x_7) (x - x_8) y.
	\end{equation*}
\end{remark}

In the non-autonomous case, in addition to the mapping, we also need to keep track of the evolution of the 
parameters. It is possible to obtain the evolution of the remaining parameters $x_{i}$ and $y_{i}$ directly
from the mapping using \eqref{eq:varphi_*-map}. For example, 
from $\varphi_{*}(\mathcal{E}_{1}) = \bar{\mathcal{E}}_{4}$
we see that 
\begin{equation*}
\left(\bar{x}_{4},\bar{y}_{4}\right) = \varphi(x_{1},y_{1}) = \varphi(0,y_{1})
= \left(\frac{x_{3}}{x_{7}}\cdot 
\frac{(x_{4} - x_{7})x_{8}y_{1} - (x_{8} - x_{7})x_{4}y_{7}}{(x_{4} - x_{3})x_{8}y_{1} - (x_{8} - x_{3})x_{4}y_{3}},0 \right).\\
\end{equation*}
{Similarly, from $\varphi_{*}(\mathcal{H}_{x} - \mathcal{E}_{3}) = \bar{\mathcal{E}}_{1}$, we get
$\left(\bar{x}_{1},\bar{y}_{1}\right) = \varphi(x_{3},y) = (0,x_{3})$,
and so on. Unfortunately, the evolution of parameters obtained in this way is complicated and nonlinear. 
A better approach is to use the parameterization of the elliptic curve $\mathcal{C}_{\mathbf{a}}$ 
(which is essentially the period map).

The curve $\mathcal{C}_{\mathbf{a}}$ can be written in a parametric form as
\begin{equation*}
\iota: \mathbb{T} = \mathbb{C}/ (\mathbb{Z} + \mathbb{Z}\tau) 
\to \mathcal{C}_{\mathbf{a}}\subset \mathbb{P}^{1} \times \mathbb{P}^{1},\quad \iota(u) = (F(u),G(u)),
\end{equation*}
where $F$ and $G$ are some elliptic functions. 
In this way we obtain points
$e_{1},\dots,e_{8}\in \mathbb{T}$ such that $(x_{i},y_{i}) = \iota(e_{i})= (F(e_{i}),G(e_{i}))$, these 
points $e_{i}$ are our new parameters. 

We also get the pull-back map 
$\iota^{*}:\operatorname{Pic}(\mathcal{X}_{\mathbf{a}})\to \operatorname{Pic}(\mathbb{T})$. Since the abelian group structure on $\mathbb{T}$ is compatible with the linear equivalence relation, 
$[p_{1}] + [p_{2}] = [p_{1} + p_{2}]$, we can identify $\operatorname{Pic}(\mathbb{T})\simeq \mathbb{T}$.
Then $\iota^{*}(\mathcal{H}_{x}) = h_{x}$,  $\iota^{*}(\mathcal{H}_{y}) = h_{y}$ and
$\iota^{*}(\mathcal{E}_{i}) = e_{i}$, where $h_{x}$, $h_{y}$, and $e_{i}$ are just points in $\mathbb{T}$.
In general, for any $l_x \mathcal{H}_x+ l_y \mathcal{H}_y+ m_1\mathcal{E}_1+\dots+m_8\mathcal{E}_8\in \operatorname{Pic}(\mathcal{X}_{\mathbf{a}})$, we get
\begin{equation*}
\iota^*(l_x \mathcal{H}_x+ l_y \mathcal{H}_y+ m_1\mathcal{E}_1+\dots+m_8\mathcal{E}_8) =
l_x h_x+ l_y h_y+ m_1e_1+\dots+m_8e_8 \in \mathbb{T}.
\end{equation*}

Next, let $\varphi:\mathcal{X}=\mathcal{X}_{\mathbf{a}} \to \bar{\mathcal{X}}=\mathcal{X}_{\varphi(\mathbf{a})}$ 
be our deautonomized mapping. To see the evolution of parameters $e_{i},h_{x},h_{y}\in \mathbb{T}$,  
consider the following
two commuting diagrams:
%
%
\begin{center}
\begin{tikzpicture}[>=stealth,baseline=0cm]
	\begin{scope}
	\node (T)  at (0,0) {$\mathbb{T}$}; 	
	\node (X1)  at (0,1.5) {$\mathcal{X}$}; 	
	\node (X2)  at (2.5,1.5) {$\bar{\mathcal{X}}$}; 	
	\draw[->] (X1)--(X2) node [above,align=center,midway] {\scriptsize $\varphi$};
	\draw[->] (T)--(X1) node [left,align=center,midway] {\scriptsize $\iota$};
	\draw[->] (T)--(X2) node [below right,align=center,midway] {\scriptsize $\bar{\iota}$};
	\end{scope}
	\begin{scope}[xshift=5cm]
	\node (T)  at (0,0) {$\operatorname{Pic}(\mathbb{T})$}; 	
	\node (X1)  at (0,1.5) {$\operatorname{Pic}(\mathcal{X})$}; 	
	\node (X2)  at (3,1.5) {$\operatorname{Pic}(\bar{\mathcal{X}})$}; 	
	\draw[->] (X2)--(X1) node [above,align=center,midway] {\scriptsize $\varphi^{*}$};
	\draw[->] (X1)--(T) node [left,align=center,midway] {\scriptsize $\iota^{*}$};
	\draw[->] (X2)--(T) node [below right,align=center,midway] {\scriptsize $\bar{\iota}^{*}$};
	\end{scope}		
\end{tikzpicture}.
\end{center}
Thus, for any $\bar{D}\in \operatorname{Pic}(\bar{\mathcal{X}})$, 
we have $\bar{\iota}^*(\bar{D})=\iota^* \circ \varphi^*(\bar{D})$.
 Let 
$\bar{h}_x$, $\bar{h}_y$ and $\bar{e}_i$ denote the pull-backs 
$\bar{\iota}^*(\bar{\mathcal{H}}_x)$, $\bar{\iota}^*(\bar{\mathcal{H}}_y)$ and $\bar{\iota}^*(\bar{\mathcal{E}}_i)$.
Then \eqref{eq:varphi^*-map} gives us the following equations for the the parameter evolution on $\mathbb{T}$:  
\begin{alignat}{2}\label{eq:phi-act_points}
\bar{h}_x &=2h_x+h_y-e_3-e_4-e_7-e_8, & \quad \bar{h}_y&=h_x,\notag\\
\bar{e}_1&=h_x-e_3,\quad \bar{e}_2=h_x-e_4,\quad \bar{e}_3=e_2,&\quad  \bar{e}_4&=e_1,\\
\bar{e}_5&=h_x-e_7,\quad \bar{e}_6=h_x-e_8,\quad \bar{e}_7=e_6,&\quad  \bar{e}_8&=e_5.	\notag
\end{alignat}
Indeed, for example,
\begin{align*}
\bar{h}_{x} &= \bar{\iota}^*(\bar{\mathcal{H}}_x)=(\iota^* \circ \varphi^*)(\bar{\mathcal{H}}_x)
=\iota^*(2\mathcal{H}_x+\mathcal{H}_y-\mathcal{E}_3-\mathcal{E}_4-\mathcal{E}_7-\mathcal{E}_8)\\
&=2h_x+h_y-e_3-e_4-e_7-e_8.
\end{align*}
Such expression was originally found in 
\cite{KMNOY03} and later interpreted by using pull-back in \cite{ET05}. 
%

To determine the evolution of the points $(x_{i},y_{i})$ on $\mathbb{P}^{1} \times \mathbb{P}^{1}$, we need to 
describe the parameterization $\iota$ explicitly. Since
$\mathcal{C}_{\mathbf{a}}$ is given by an equation of bi-degree $(2,2)$, 
for a generic $x_{0}$, $\iota^{*}(V(x - x_{0})) = p_{1} + p_{2}\in \operatorname{Div(\mathbb{T})}$.
In view of the identification $\operatorname{Pic}(\mathbb{T})\simeq \mathbb{T}$, we have
$p_{1} + p_{2} = h_{x}$, and a similar statement is true for $\iota^{*}(V(y - y_{0}))$. Thus, we 
can write
\begin{alignat*}{2}
	\iota^{*}(V(x - 0)) &= u_{1} + (h_{x} - u_{1}), &\qquad 	\iota^{*}(V(y - 0)) &= u_{3} + (h_{y} - u_{3}),  \\
\iota^{*}(V(x - \infty)) &= u_{2} + (h_{x} - u_{2}), &\qquad \iota^{*}(V(y - \infty)) &= u_{4} + (h_{y} - u_{4}),
\end{alignat*}
for some points $u_1$, $u_2$, $u_3$, $u_4$ in $\mathbb{T}$. 
Hence the functions $F(u)$ and $G(u)$ parameterizing $\mathcal{C}_{\mathbf{a}}$ 
are elliptic functions of order $2$, i.e.~they have two zeroes and two poles (counted with multiplicities) 
in the fundamental domain $\mathbb{T}$.
Therefore, $F$ and $G$ can be explicitly written as
\begin{align}\label{FGu}
F(u)=c_1 \frac{[u-u_1][u-h_x+u_1]}{[u-u_2][u-h_x+u_2]},\quad
G(u)=c_2 \frac{[u-u_3][u-h_y+u_3]}{[u-u_4][u-h_y+u_4]},
\end{align}
for some normalization constants $c_1,\ c_2 \in \C^\times$. Here
$[t]$ denotes the  $\sigma$-function (or the odd theta function)\footnote{Recall that we have degenerations to  $[t]=\sin t$ (or $\sinh t$) and $[t]=t$ 
in the trigonometric and rational limits respectively.},
and thus $[-t]=-[t]$, $[0]=0$, and
the so called the Riemann relation
holds \cite{WW27},
\begin{align*}
&[t+a][t-a][b+c][b-c]+[t+b][t-b][c+a][c-a]\\
&\qquad   +[t+c][t-c][a+b][a-b]=0.
\end{align*}

Our chosen
$\operatorname{\mathbf{PGL}}_{2}(\mathbb{C}) \times  \operatorname{\mathbf{PGL}}_{2}(\mathbb{C})$ 
normalization \eqref{eq:norm-pts}  is equivalent to choosing  $u_1=e_1$, $u_2=e_5$, $u_3=e_4$, and $u_4=e_8$.
In this approach it is actually more convenient to use dilations to make $c_{1} = c_{2} = 1$.
Thus, we can assume that our parametric representation $\iota$ of $\mathcal{C}_{\mathbf{a}}$
is given by
\begin{align}\label{eq:FG}
F(u)=\frac{[u-e_1][u-h_x+e_1]}{[u-e_5][u-h_x+e_5]},\quad
G(u)=\frac{[u-e_4][u-h_y+e_4]}{[u-e_8][u-h_y+e_8]},
\end{align}
and similarly for $\bar{F}$ and $\bar{G}$. 
Then the action of $\varphi^{*}$ given by \eqref{eq:phi-act_points} results in the following equations:
%
\begin{align}\label{eq:FGup-phi}
\bar{F}(u)&=\frac{[u-\bar{e}_1][u-\bar{h}_x+\bar{e}_1]}{[u-\bar{e}_5][u-\bar{h}_x+\bar{e}_5]}
=\frac{[u-h_x+e_3][u-h_x-h_y+e_4+e_7+e_8]}{[u-h_x+e_7][u-h_x-h_y+e_3+e_4+e_8]},\notag\\
\bar{G}(u)&=\frac{[u-\bar{e}_4][u-\bar{h}_y+\bar{e}_4]}{[u-\bar{e}_8][u-\bar{h}_y+\bar{e}_8]}=
\frac{[u-e_1][u-h_x+e_1]}{[u-e_5][u-h_x+e_5]}=F(u).
\end{align}

\begin{remark} Note that in this way we have completely fixed the M\"obius transformation
	ambiguity. However, not only is $\bar{\mathbf{a}}$ in general different from $\mathbf{a}$,
	which is exactly the deautonomization phenomenon, but also the curve can move, 
	$\bar{\mathcal{C}}$ can in general be different from 
	$\mathcal{C}$.
\end{remark}
%
%
%
%
%
%
%
%
To evaluate the normalization constants $C_{1}$ and $C_{2}$, we note that, in view of \eqref{eq:varphi^*-map}, \eqref{eq:ell_xyup}, $\varphi^*(\bar{E}_2)=H_x-E_4$ implies
\begin{align*}
(\bar{x}_{2},\bar{y}_{2}) &=\varphi_*(V(x-x_4))= (C_{1},C_{2}x_{4}),
\end{align*}
while from \eqref{eq:FG}, \eqref{eq:FGup-phi} and \eqref{eq:phi-act_points} we have 
\begin{align*}
(\bar{x}_{2},\bar{y}_{2}) &= \left(\bar{F}(\bar{e}_{2}),\bar{G}(\bar{e_{2}})\right) 
= \left(\bar{F}(h_{x} - e_{4}),F(h_{x} - e_{4})\right)\\
&= \left(\frac{[e_{4} - e_{3}][h_{y} - e_{7} - e_{8}]}{[e_{4} - e_{7}][h_{y} - e_{3} - e_{8}]}, x_{4} \right),
\end{align*}
and therefore 
\begin{align}\label{ell_C12}
C_1=\frac{[e_4-e_3][h_y-e_7-e_8]}{[e_4-e_7][h_y-e_3-e_8]} \text{ and } C_2=1.
\end{align}

To summarize, the mapping $\varphi$ is deautonomized as \eqref{eq:ell_xyup} with normalization constant
given by \eqref{ell_C12}
and the transformation of parameters governed by \eqref{eq:phi-act_points}. We also want to point
out that an additive analogue of this mapping was considered in \cite{RaGr-2017}, where it was called
Class $I$, $(M, N, P, Q) = (2, 2, 2, 2)$ equation. 

%

\begin{remark}
	It is important to note that, since $\varphi_{*}$ is not a translation on the symmetry root lattice, 
	the resulting equation is not a so-called \emph{elliptic difference equation}. For an elliptic
	difference equation, the coefficients in the equation should change translationally in the argument of some
	elliptic function. 
	However, $\varphi^{4}$ is an elliptic 
	difference equation with the evolution of parameters $(x_{i},y_{i})$ under the mapping $(\varphi^{4})^{n}$
	given by 
	\begin{alignat*}{2}
		(x_{1},y_{1}) &= \left(0, G(e_{1} + n d )\right),\qquad & 
		(x_{2},y_{2}) &= \left(F(e_{2}), G(e_{2} + n d )\right), \\
		(x_{3},y_{3}) &= \left(F(e_{3} - n d), G(e_{3})\right),\qquad & 
		(x_{4},y_{4}) &= \left(F(e_{4} - n d), 0 \right), \\
		(x_{5},y_{5}) &= \left(\infty, G(e_{5} + n d )\right),\qquad & 
		(x_{6},y_{6}) &= \left(F(e_{6}), G(e_{6} + n d )\right), \\
		(x_{7},y_{7}) &= \left(F(e_{7} - n d), G(e_{7})\right),\qquad & 
		(x_{8},y_{8}) &= \left(F(e_{8} - n d ), G(e_{8} )\right), 
	\end{alignat*}
	where $F(u)$ and $G(u)$ are given by \eqref{eq:FG}, and 
	\begin{equation*}
	d = \iota^{*}(\delta) = 2 h_{x} + 2 h_{y} - e_{1} - e_{2} - e_{3} - e_{4} - e_{5} - e_{6} - e_{7} - e_{8}
	\end{equation*}
	is the pull-back on $\mathbb{T}$ of the class of the anticanocnical divisor on $\mathcal{X}$ (which is invariant
	under our mapping).
	We return to this observation in Section~\ref{sec:Special}, where we impose  constraints on the parameters 
	to ensure that either $\varphi^{2}$ or $\varphi$ become elliptic difference equations. This leads to examples
	of elliptic discrete Painlev\'e equations whose symmetry groups do not explicitly appear in the Sakai classification scheme.
\end{remark}


\subsection{Singular fiber of type $A_{0}^{(1)*}$}
\label{subs:A01}

This singular fiber occurs when $k=\pm 4i(a+a^{-1})$. 
In this case the curve $\Gamma_{k}$ given by \eqref{exy} is an irreducible nodal curve, 
and so, similarly to the generic case, 
the symmetry group of $\mathcal{X}_A$ is $W(E_8^{(1)})$ and the 
deautonomized mapping is given by same equation \eqref{eq:ell_xyup}. However, the evolution 
of parameters is \emph{different}. 

An irreducible nodal curve can be parameterized by 
$\iota: \mathbb{P}^{1}\to \mathbb{P}^{1} \times \mathbb{P}^{1}$ with $\iota(u) = (F(u),G(u))$
with the additional constraint $\iota(0) = \iota(\infty)$ that corresponding to the node. 
Thus, outside of the node $\mathcal{C}_{\mathbf{a}}$
can be parameterized by 
$\iota: \mathbb{C}^{\times}\to \mathbb{P}^{1}\times \mathbb{P}^{1}$. Since $\mathcal{C}_{\mathbf{a}}$
is of bi-degree $(2,2)$, $F$ and $G$ are \emph{rational} functions of degree $2$ 
on $\mathbb{C}^{\times}$ that can be written as 
\begin{align*}
F(u)=c_1 \frac{(u-u_{1})(u-u_{2})}{(u-u_{3})(u-u_{4})},\quad
G(u)=c_2 \frac{(u-u_{5})(u-u_{6})}{(u-u_{7})(u-u_{8})},
\end{align*}
where, in view of the nodal condition $\iota(0) = \iota(\infty)$ parameters
$u_{i}$ must satisfy $u_{1} u_{2} = u_{3} u_{4}$ and $u_{5} u_{6} = u_{7} u_{8}$.
On $\mathbb{C}^{\times }$ any two divisors of the same degree are linearly equivalent,
which is too strong for our purposes. Instead, we introduce the following weaker equivalence
relation on $\operatorname{Div}(\mathbb{C}^{\times})$:
\begin{equation}\label{eq:equivC*}
	\sum_{i=1}^{I} p_{i} \sim \sum_{j=1}^{J} q_{j}\qquad\text{if and only if}\qquad 
	I = J \text{ and } p_{1}\cdots p_{I} = q_{1}\cdots q_{J}.
\end{equation}
For this equivalence relation it is convenient to identify an equivalence class with the point
$p_{1}\cdots p_{I}\in \mathbb{C}^{\times}$.
%
In particular,
it is easy to see that pullbacks of two linearly equivalent divisors on 
$\mathcal{X}$ via $\iota^{*}$ are also equivalent in that way. 
For example, 
\begin{equation*}
u_{1} + u_{2} = \iota^{*}(V(x-0)) \sim u_{3} + u_{4}= \iota^{*}(V(x-\infty))\sim s + t= \iota^{*}(V(x-k)),
\end{equation*}
and so we put $h_{x} = u_{1}u_{2} = u_{3}u_{4}=st$. Similarly, $h_{y} = u_{5} u_{6} = u_{7} u_{8}$,
where we consider $h_{x}$ and $h_{y}$ as equivalence classes in $\operatorname{Div}(\mathbb{C}^{\times })/\sim$,
and so on.

Using this convention, we can write our parameterization of $\mathcal{C}_{\mathbf{a}}$ as 
\begin{align}\label{A0*FGu}
F(u)=c_1 \frac{(u-u_1)(u-h_x/u_1)}{(u-u_2)(u-h_x/u_2)},\quad
G(u)=c_2 \frac{(u-u_3)(u-h_y/u_3)}{(u-u_4)(u-h_y/u_4)},
\end{align}
and we can impose the same normalization \eqref{eq:norm-pts} as before, 
$c_1=c_2=1$, $u_1=e_1$, $u_2=e_5$, $u_3=e_4$, and $u_4=e_8$.
The parameter evolution 
now takes the multiplicative form
\begin{alignat}{2}\label{nodal_act_points}
\bar{h}_x &=h_x^2h_ye_3^{-1}e_4^{-1}e_7^{-1}e_8^{-1}, &\quad \bar{h}_y&=h_x,\notag\\
\bar{e}_1&=h_x e_3^{-1},\quad  \bar{e}_2=h_x e_4^{-1},&\quad \bar{e}_3&=e_2,\quad  \bar{e}_4=e_1,\\
\bar{e}_5&=h_xe_7^{-1},\quad \bar{e}_6=h_xe_8^{-1},&\quad \bar{e}_7&=e_6,\quad  \bar{e}_8=e_5.\notag
\end{alignat} 
In particular, the normalization constant $C_{1}$ in \eqref{eq:ell_xyup} becomes 
\begin{equation}
C_1=\bar{F}(\bar{e}_{2}) = \frac{(e_{4} - e_{3}) (h_{y} - e_{7} e_{8})}{(e_{4} - e_{7})(h_{y} - e_{3} e_{8})}. 
\end{equation}

\begin{remark}
	This parameterization and the multiplicative structure of the parameter evolution 
	can also be obtained via trigonometric 
	degeneration of \eqref{eq:FG}, but the above description is more concrete and 
	so is better for our purposes. 
\end{remark}


%

\subsection{Singular fiber of type $A_{1}^{(1)}$}

This case corresponds to $k=0$, or to $k = \pm \left(a-a^{-1}\right)^{2}$, so we need to consider three different cases.
We explain the case $k=0$ in detail and briefly outline the changes needed for the other two cases.

\subsubsection{Case $k=0$} 
\label{ssub:case_k_0}

When $k=0$, the equation defining the fiber $\Gamma_{0}$ factors as 
\begin{equation*}
(xy+a(x+y)-1)(xy+a^{-1}(x+y)-1) = 0, 
\end{equation*}
and hence an anti-canonical divisor decomposes as $D_{0} + D_{1}$, where the prime divisors
$D_0 = V(xy+a(a+y)-1)$ and $D_1 = V(xy+a^{-1}(x+y)-1)$ have classes
\begin{alignat*}{2}
{\mathcal D}_0&=\mathcal{H}_x+\mathcal{H}_y-\mathcal{E}_1-\mathcal{E}_3-\mathcal{E}_6-\mathcal{E}_8,
\qquad & &\ \mathcal{D}_{0} \bullet \mathcal{D}_{1} = 1, \\
{\mathcal D}_1&=\mathcal{H}_x+\mathcal{H}_y-\mathcal{E}_2-\mathcal{E}_4-\mathcal{E}_5-\mathcal{E}_7,
\qquad & 
&\begin{tikzpicture}[baseline=-0.2cm]
	\node (c1) at (0,0) [circle, thick, draw=black!100, inner sep=0pt,minimum size=1.3ex] {};
	\node (c2) at (1,0) [circle, thick, draw=black!100, inner sep=0pt,minimum size=1.3ex] {};
	\draw[thick, double distance = .4ex] (c1) -- (c2);
	\node[left] at (c1.west) {$\mathcal{D}_{0}$};
	\node[right] at (c2.east) {$\mathcal{D}_{1}$};
\end{tikzpicture}.					
\end{alignat*}
Thus, in this case we get the family $\mathcal{X}_{\mathcal{A}}$ of type $A_{1}^{(1)}$ with the
symmetry group of type $E_7^{(1)}$.

\begin{remark}	
We can take the simple roots $\alpha_{i}$ for the symmetry root sublattice $Q(E_{7}^{(1)}) = \operatorname{Span}_{\mathbb{Z}}\{\alpha_{0},\dots,\alpha_{7}\}$  
of $E_{7}^{(1)}$ to be the following:
\begin{align*}
	\alpha_{0} &=\mathcal{H}_{x}-\mathcal{H}_{y}, & 
	\alpha_{1}&=\mathcal{E}_{6}-\mathcal{E}_{8},  &	
	\alpha_{2} &=\mathcal{E}_{3}-\mathcal{E}_{6}, & 
	\alpha_{3}&=\mathcal{E}_{1}-\mathcal{E}_{3},  \\	
	\alpha_{4} &=\mathcal{H}_{y}-\mathcal{E}_{1}-\mathcal{E}_{2}, &
	\alpha_{5}&=\mathcal{E}_{2}-\mathcal{E}_{4}, &
	\alpha_{6} &=\mathcal{E}_{4}-\mathcal{E}_{5}, &
	\alpha_{7}&=\mathcal{E}_{5}-\mathcal{E}_{7}. \\
\end{align*} 

\begin{figure}[h]
	\centering
	\begin{tikzpicture}[>=stealth, elt/.style={circle,draw=black!100,thick, inner sep=0pt,minimum size=2mm,baseline=-20pt}]
	\path 	( -3,0) 	node 	(D1) [elt] {}
	        ( -2,0) 	node 	(D2) [elt] {}
	        ( -1,0) 	node  	(D3) [elt] {}
	        ( 0,0) 		node 	(D4) [elt] {}
	        ( 1,0) 		node 	(D5) [elt] {}
	        ( 2,0) 		node 	(D6) [elt] {}
	        ( 3,0) 		node 	(D7) [elt] {}
	        ( 0,1) 		node 	(D0) [elt] {};
	\draw [black,line width=1pt ] (D1) -- (D2) -- (D3) -- (D4) -- (D5) -- (D6) -- (D7);
	\draw [black,line width=1pt ] (D4) -- (D0);

	\node[below] at (D1.south) {$\alpha_{1}$};
	\node[below] at (D2.south) {$\alpha_{2}$};
	\node[below] at (D3.south) {$\alpha_{3}$};
	\node[below] at (D4.south) {$\alpha_{4}$};
	\node[below] at (D5.south) {$\alpha_{5}$};
	\node[below] at (D6.south) {$\alpha_{6}$};
	\node[below] at (D7.south) {$\alpha_{7}$};
	\node[above] at (D0.north) {$\alpha_{0}$};
	
						\node[red,above] at (D1.east) {\small $\ 1$};
						\node[red,above] at (D2.east) {\small $\ 2$};
						\node[red,above] at (D3.east) {\small $\ 3$};
						\node[red,above] at (D4.east) {\small $\ 4$};
						\node[red,above] at (D5.east) {\small $\ 3$};
						\node[red,above] at (D6.east) {\small $\ 2$};
						\node[red,above] at (D7.east) {\small $\ 1$};
						\node[red,below] at (D0.east) {\small $\ 2$};
	
		\end{tikzpicture}	
		\caption{Affine Dynkin diagram $E_{7}^{(1)}$.}
	\label{fig:dyn-e7}
\end{figure}

The configuration of $\alpha_{i}$ and the coefficients of the representation of the 
null root vector $\delta$ in terms of $\alpha_{i}$ are 
shown on Figure~\ref{fig:dyn-e7}, and hence the anti-canonical divisor class can be written as
\begin{equation*}
-\mathcal{K}_{\mathcal{X}}=\delta=\mathcal{D}_{0}+\mathcal{D}_{1}
=2 \alpha_{0}+ \alpha_{1} + 2\alpha_{2} + 3\alpha_{3} + 4\alpha_{4} + 3\alpha_{5} + 2\alpha_{6} + \alpha_{7}.
\end{equation*}

The action~\eqref{eq:varphi^*-map} of $\varphi^{*}$ on $\alpha_{i}$ is 
\begin{align*}
\bar{\alpha}_0&\mapsto \alpha_{0} + \alpha_{1} + \alpha_{2} + 2(\alpha_{3} + \alpha_{4} + \alpha_{5}) + \alpha_{6} + \alpha_{7},\\
\bar{\alpha}_1&\mapsto \alpha_{0} + \alpha_{1} + \alpha_{2} + \alpha_{3} + \alpha_{4} + \alpha_{5} + \alpha_{6},\\
\bar{\alpha}_2&\mapsto - (\alpha_{0} + \alpha_{1} + \alpha_{2} + \alpha_{3} + \alpha_{4}),\\
\bar{\alpha}_3&\mapsto \alpha_{0} + \alpha_{3} + \alpha_{4}, \\
\bar{\alpha}_4&\mapsto - (\alpha_{0} + \alpha_{3} + \alpha_{4} + \alpha_{5}),\\
\bar{\alpha}_5&\mapsto \alpha_{0} + \alpha_{4} + \alpha_{5}, \\
\bar{\alpha}_6&\mapsto  - (\alpha_{0} + \alpha_{4} + \alpha_{5} + \alpha_{6} + \alpha_{7}),\\
\bar{\alpha}_7&\mapsto \alpha_{0} + \alpha_{2} + \alpha_{3} + \alpha_{4} + \alpha_{5} + \alpha_{6} + \alpha_{7}.
\end{align*}

So $\varphi^{*}$ is not a translation but we can check that again, $(\varphi^{4})^{*}$ is:
\begin{equation*}
(\varphi^{4})^{*}:(\bar{\alpha}_{0},...,\bar{\alpha}_{7})\mapsto
(\alpha_{0},...,\alpha_{7})+(2,1,-1,1,-2,1,-1,1)\delta.
\end{equation*}	

The action of $\varphi_{*}$ on $\operatorname{Pic}(\mathcal{X})$ 
can be written in the generators of $\tilde{W}(E_{7}^{(1)})$ as
\begin{equation*}
\varphi_{*} = \sigma\, w_{1} w_{2} w_{3}\, w_{2} w_{0} w_{5}\, w_{4} w_{3} w_{7}\, w_{6} w_{5} w_{6}\, w_{4} w_{3} w_{2},
\end{equation*}
where $\sigma$ is the reflection symmetry of the $E_{7}^{(1)}$ Dynkin diagram
exchanging $\alpha_{1}$ with $\alpha_{7}$, $\alpha_{2}$ with $\alpha_{6}$, and $\alpha_{3}$ with $\alpha_{5}$.

\end{remark}

Consider now the evolution of parameters. First, we parameterize the irreducible divisors 
$D_{0}$ and $D_{1}$ by $\iota_i:\mathbb{P}^{1} \to \mathbb{P}^1\times \mathbb{P}^1$, where 
$\iota_{i}(u) = (F_i(u),G_i(u))$, $i=0,1$. In this case 
$F_i(u)$ and $G_i(u)$ are a rational function of degree $1$ on $\mathbb{P}^{1}$,
\begin{equation}\label{eq:param-A11}
	\begin{aligned}
		F_{0}(u) &= c_{0} \frac{u - u_{1}}{u - u_{2}}, &\qquad G_{0}(u) &= d_{0} \frac{u - u_{3}}{u - u_{4}},\\
		F_{1}(u) &= c_{1} \frac{u - u_{5}}{u - u_{6}}, &\quad G_{1}(u) &= d_{1} \frac{u - u_{7}}{u - u_{8}}.		
	\end{aligned}
\end{equation}
 The 
divisors $D_{0}$ and $D_{1}$ intersect at two points, and without loss of generality
we can assume that the corresponding values of the parameters are $0$ and $\infty$; in fact,
it is convenient to require $\iota_{0}(0) = \iota_{1}(\infty)$ and $\iota_{0}(\infty) = \iota_{1}(0)$,
which results in
\begin{equation}\label{eq:param-constr}
	c_{0} u_{1} = c_{1} u_{2},\quad d_{0}u_{3} = d_{1}u_{4}, \quad c_{0}u_{6} = c_{1}u_{5}, 
	\quad d_{0}u_{8} = d_{1} u_{7},
\end{equation}
and so $u_{1}u_{5} = u_{2}u_{6}$ and $u_{3}u_{7} = u_{4}u_{8}$. 
The part of $\mathcal{C}_{\mathbf{a}}$ away from the intersection points is parameterized by 
$\iota_i:\mathbb{C}^{\times } \to \mathbb{P}^1\times \mathbb{P}^1$, and that is where our 
mapping parameters lie.

In view of \eqref{eq:param-constr} we can write 
\begin{equation*}
	F_{1}(u) = c_{0} \frac{u u_{1} - u_{1} u_{5}}{u u_{2} - u_{1} u_{5}}, \qquad
	G_{1}(u) = d_{0} \frac{u u_{3} - u_{3} u_{7}}{u u_{4} - u_{3} u_{7}}.
\end{equation*}

For a divisor $D\in \operatorname{Div(\mathcal{X})}$,
its pre-image $\iota^{*}(D)\in \mathbb{C}^{\times }$ is now defined as 
$\iota^{*}(D) = \iota_{0}^{*}(D) + \iota_{1}^{*}(D)$. Also, 
as in Subsection~\ref{subs:A01}, we impose a weak equivalence relation \eqref{eq:equivC*}
on $\operatorname{Div}(\mathbb{C}^{\times})$.
Then $\iota^{*}V(x-k) = s + t\in \mathbb{C}^{\times}$, where $st = u_{1}u_{5} = u_{2}u_{6}$,
so we again put $h_{x}=u_{1}u_{5}\in \operatorname{Div}(\mathbb{C}^{\times })/\sim$, 
$h_{y} = u_{3}u_{7}$, and so on. It is now clear that this weak equivalence relation 
is compatible with the linear equivalence on $\operatorname{Pic}(\mathcal{X})$,
and in this way we get the multiplicative parameter evolution, as in \eqref{nodal_act_points}. 

The parametrerization \eqref{eq:param-A11} then takes the form
\begin{equation}
	\begin{aligned}
		F_{0}(u) &= c \frac{u - u_{1}}{u - u_{2}}, &\qquad G_{0}(u) &= d \frac{u - u_{3}}{u - u_{4}},\\
		F_{1}(u) &= c \frac{u u_{1} - h_{x}}{u u_{2} - h_{x}}, &\quad G_{1}(u) &= d \frac{u u_{3} - h_{y}}{u u_{4} - h_{y}}.		
	\end{aligned}
\end{equation}

Using M\"obius transformations to ensure the normaliztion \eqref{eq:norm-pts}, we see that 
$u_{1} = e_{1}$, $u_{2} = h_{x}/e_{5}$, $u_{3} = h_{y}/e_{4}$, and $u_{4} = e_{8}$. 
To also ensure that $\bar{y}=x$, we use rescaling to put $c = 1$ and $d = (e_{4}e_{8})/h_{y}$.
Our final parameterization expressions then are:
\begin{equation}\label{A1*FGu2}
	\begin{aligned}
		F_{0}(u) &= \frac{u - e_{1}}{u - h_{x}/e_{5}}, &\qquad G_{0}(u) &= \frac{e_{4}e_{8}}{h_{y}}  \frac{u - h_{y}/e_{4}}{u - e_{8}},\\
		F_{1}(u) &= \frac{e_{1}e_{5}}{h_{x}}  \frac{u - h_{x}/e_{1}}{u - e_{5}}, &\quad G_{1}(u) &=  \frac{u -e_{4}}{u - h_{y}/e_{8}}.		
	\end{aligned}
\end{equation}
Thus, the deautonomization of the mapping $\varphi$ in this case is again given by~\eqref{eq:ell_xyup}, with the normalization 
\begin{equation}
	C_{1} = \frac{e_4(h_y-e_7e_8)}{h_y(e_4-e_7)},\qquad C_{2} = 1,
\end{equation}
and parameter evolution given by \eqref{nodal_act_points}.

\begin{remark}
	Note that this equation is equivalent, modulo M\"obius transformations, to Equation (2.16) of \cite{RWGCS05}. 
	Since equation (2.16) of \cite{RWGCS05} is given in the implicit form, instead of Proposition~\ref{prop_blow}
	we should use Proposition~\ref{prop_d} and apply it to the following decomposition of pull-backs into 
	deterministic classes:
	\begin{multline*}
	\varphi^{*}\left((\bar{H}_x+\bar{H}_y-\bar{E}_1-\bar{E}_3-\bar{E}_6-\bar{E}_8) +(\bar{E}_1)+(\bar{E}_3)+(\bar{E}_6)+(\bar{E}_8)\right) = \\
	(H_x+H_y-E_2-E_4-E_5-E_7) +(H_x-E_3)+(E_2)+(H_x-E_8)+(E_5)
	\end{multline*} and 
	\begin{multline*}
	\varphi^{*}\left((\bar{H}_x+\bar{H}_y-\bar{E}_2-\bar{E}_4-\bar{E}_5-\bar{E}_7) +(\bar{E}_2)+(\bar{E}_4)+(\bar{E}_5)+(\bar{E}_7)\right) = \\
	(H_x+H_y-E_1-E_3-E_6-E_8) +(H_x-E_4)+(E_1)+(H_x-E_7)+(E_6).
	\end{multline*}
	This kind of decompositions of the pull-back action on the Picard group are heuristic and appear only in cases lower than $E_7^{(1)}$.
\end{remark}

\subsubsection{Case $k=(a-a^{-1})^{2}$} 
\label{ssub:case_k_a_a_1_2}

This case is very similar, the only change is in the decomposition of the anticanonical
divisor, and the  resulting change in the parameterizations $\iota_{i}$. We have
$D_0=V(xy+ax+a^{-1}y-1)$, $D_1=V(xy+a^{-1}x+ay-1)$, their classes are
\begin{alignat*}{2}
{\mathcal D}_0&=\mathcal{H}_x+\mathcal{H}_y-\mathcal{E}_1-\mathcal{E}_4-\mathcal{E}_5-\mathcal{E}_8,
\qquad & &\ \mathcal{D}_{0} \bullet \mathcal{D}_{1} = 1, \\ 
{\mathcal D}_1&=\mathcal{H}_x+\mathcal{H}_y-\mathcal{E}_2-\mathcal{E}_3-\mathcal{E}_6-\mathcal{E}_7,
\qquad & 
&\begin{tikzpicture}[baseline=-0.2cm]
	\node (c1) at (0,0) [circle, thick, draw=black!100, inner sep=0pt,minimum size=1.3ex] {};
	\node (c2) at (1,0) [circle, thick, draw=black!100, inner sep=0pt,minimum size=1.3ex] {};
	\draw[thick, double distance = .4ex] (c1) -- (c2);
	\node[left] at (c1.west) {$\mathcal{D}_{0}$};
	\node[right] at (c2.east) {$\mathcal{D}_{1}$};
\end{tikzpicture},
\end{alignat*}   
%
%
and the parameterizations $\iota_{i}$ are now given by 
\begin{equation*}
	\begin{aligned}
		F_{0}(u) &= \frac{u - e_{1}}{u - e_{5}}, &\qquad G_{0}(u) &= \frac{u - e_{4}}{u - e_{8}},\\
		F_{1}(u) &= \frac{e_{1}}{e_{5}} \frac{u - h_{x}/e_{1}}{u - h_{x}/e_{5}}, 
		&\quad G_{1}(u) &= \frac{e_{4}}{e_{8}} \frac{u -h_{y}/e_{4}}{u - h_{y}/e_{8}}.		
	\end{aligned}
\end{equation*}
The deautonomization of the mapping $\varphi$ is given by~\eqref{eq:ell_xyup}, with the normalization 
\begin{equation}\label{eq:norm-A11}
	C_{1} = \frac{h_{y}-e_{7}e_{8}}{h_y - e_{3}e_{4}},\qquad C_{2} = 1,
\end{equation}
and parameter evolution given by \eqref{nodal_act_points}.



\subsubsection{Case $k= - (a - a^{-1})^{2}$} 
\label{ssub:case_k_ma_a_1_2}
In this case
$D_0=V(xy+ax+ay-a^2)$, $D_1= V(a^2xy+ax+ay-1)$,
\begin{alignat*}{2}
{\mathcal D}_0&=\mathcal{H}_x+\mathcal{H}_y-\mathcal{E}_1-\mathcal{E}_3-\mathcal{E}_5-\mathcal{E}_7,
\qquad & &\ \mathcal{D}_{0} \bullet \mathcal{D}_{1} = 1, \\ 
{\mathcal D}_1&=\mathcal{H}_x+\mathcal{H}_y-\mathcal{E}_2-\mathcal{E}_4-\mathcal{E}_6-\mathcal{E}_8\qquad & 
&\begin{tikzpicture}[baseline=-0.2cm]
	\node (c1) at (0,0) [circle, thick, draw=black!100, inner sep=0pt,minimum size=1.3ex] {};
	\node (c2) at (1,0) [circle, thick, draw=black!100, inner sep=0pt,minimum size=1.3ex] {};
	\draw[thick, double distance = .4ex] (c1) -- (c2);
	\node[left] at (c1.west) {$\mathcal{D}_{0}$};
	\node[right] at (c2.east) {$\mathcal{D}_{1}$};
\end{tikzpicture},
\end{alignat*}  
and the parameterizations change to
\begin{equation*}
	\begin{aligned}
		F_{0}(u) &= \frac{u - e_{1}}{u - e_{5}}, &\qquad G_{0}(u) &= \frac{e_{4}}{e_{8}}  \frac{u - h_{y}/e_{4}}{u - h_{y}/e_{8}},\\
		F_{1}(u) &= \frac{e_{1}}{e_{5}} \frac{u - h_{x}/e_{1}}{u - h_{x}/e_{5}}, 
		&\quad G_{1}(u) &= \frac{u -e_{4}}{u - e_{8}}.		
	\end{aligned}
\end{equation*}
The deautonomization of the mapping $\varphi$ is given by~\eqref{eq:ell_xyup}, normalization stays the same as
\eqref{eq:norm-A11},
and parameter evolution is again given by \eqref{nodal_act_points}.

%
%


\subsection{Singular fiber of type $A_{3}^{(1)}$}

This case is well-known and it corresponds to the famous $q$-difference equation q-$P_{\text{VI}}$ of Jimbo-Sakai \cite{JS96}.
It occurs when $k=\infty$, i.e., we just take 
the bi-quadratic curve $\Gamma_{\infty}$ defined by the matrix 
$\mathbf{A}$. When written in the homogeneous coordinates, 
$\Gamma_{\infty}$ is given by the equation $X_{0} X_{1} Y_{0} Y_{1} = 0$,
and so the anti-canonical divisor decomposes as $- \mathcal{K}_{X} = D_{0} + D_{1}+D_{2}+D_{3}$, where the prime divisors
$D_{i}$ are
\begin{alignat*}{2}
	D_{0} &= V(x - 0) &&= H_{x} - E_{1} - E_{2} \in 	\mathcal{H}_{x}-\mathcal{E}_{1}-\mathcal{E}_{2},\\
	D_{1} &= V(y - 0) &&= H_{y} - E_{3} - E_{4} \in 	\mathcal{H}_{y}-\mathcal{E}_{3}-\mathcal{E}_{4},\\
	D_{2} &= V(x - \infty) &&= H_{x} - E_{5} - E_{6} \in 	\mathcal{H}_{x}-\mathcal{E}_{5}-\mathcal{E}_{6},\\
	D_{3} &= V(y - \infty) &&= H_{y} - E_{7} - E_{8} \in 	\mathcal{H}_{y}-\mathcal{E}_{7}-\mathcal{E}_{8},
\end{alignat*}
and we get the family $\mathcal{X}_{\mathcal{A}}$ of type $A_{3}^{(1)}$ with the
symmetry group $W(D_5^{(1)})$.

The parameters of the mapping are coordinates $e_{i}$
of the blowup points $p_1(0,e_{1})$, $p_2(0,e_{2})$, $p_3(e_{3},0)$, $p_4(e_{4},0)$, 
$p_5(\infty, e_{5})$,  $p_6(\infty, e_{6})$,  $p_7(e_{7}, \infty)$, and $p_8(e_{8}, \infty)$. 
Note that in this case the degenerate structure of $\Gamma_{\infty}$ fixes 
the translational part of the M\"obius transformations action on the factors --- we require that 
$D_{0} = V(x - 0)$, $D_{1} = V(y - 0)$, $D_{2} = V(x - \infty)$, and $D_{3} = V(y - \infty)$, but the 
rescaling freedom on each factor still remains.

The deautonomized mapping can be obtained by direct application of Proposition~\ref{prop_blow} 
and equations \eqref{eq:varphi^*-map}. We have
\begin{align*}
	\varphi^{*}(V(\bar{x} - 0)) &= \pi_{*}\circ \tilde{\varphi}^{*}\circ \bar{\pi}^{*} (V(\bar{x} - 0)) \\
	&= \pi_{*} \circ \tilde{\varphi}^{*}((\bar{H}_{x} - \bar{E}_{1} - \bar{E}_{2}) + \bar{E}_{1} + \bar{E}_{2}) \\
	&= \pi_{*} (( H_{y} -  E_{7} - E_{8}) + (H_{x} - E_{3}) + (H_{x} - E_{4}))\\
	&= V(y - \infty) + V(x - e_{3}) + V(x - e_{4}),\\
	\varphi^{*}(V(\bar{x} - \infty)) &= \pi_{*}\circ \tilde{\varphi}^{*}\circ \bar{\pi}^{*} (V(\bar{x} - \infty))\\
	&= \pi_{*} \circ \tilde{\varphi}^{*}((\bar{H}_{x} - \bar{E}_{5} - \bar{E}_{6}) + \bar{E}_{5} + \bar{E}_{6}) \\
	&= \pi_{*} ((H_{y} - E_{3} - E_{4}) + (H_{x} - E_{7}) + (H_{x} - E_{8}))\\
	&= V(y - 0) + V(x - e_{7}) + V(x - e_{8}),\\
	\varphi^{*}(V(\bar{y} - 0)) &= \pi_{*}\circ \tilde{\varphi}^{*}\circ \bar{\pi}^{*} (V(\bar{y} - 0))\\
	&= \pi_{*} \circ \tilde{\varphi}^{*}((\bar{H}_{y} - \bar{E}_{3} - \bar{E}_{4}) + \bar{E}_{3} + \bar{E}_{4}) \\	
	&= \pi_{*} ((H_{x} - E_{1} - E_{2}) + E_{1} + E_{2}) = V(x-0),\\
	\varphi^{*}(V(\bar{y} - \infty)) &= \pi_{*}\circ \tilde{\varphi}^{*}\circ \bar{\pi}^{*} (V(\bar{y} - \infty))\\
	&= \pi_{*} \circ \tilde{\varphi}^{*}((\bar{H}_{y} - \bar{E}_{7} - \bar{E}_{8}) + \bar{E}_{7} + \bar{E}_{8}) \\	
	&= \pi_{*} ((H_{x} - E_{5} - E_{6}) + E_{5} + E_{6}) = V(x-\infty),
\end{align*}
and so the deautonomized mapping $\varphi$ is given by
\begin{equation}\label{a31_xyup}
	\left\{ 
	\begin{aligned}
		\bar{x} &= C_{1} \frac{(x-e_{3})(x-e_{4})}{y(x-e_{7})(x-e_{8})},  \\
		\bar{y} &= C_{2} x.	
	\end{aligned}
	\right.,
\end{equation}
where $C_{1}$ and $C_{2}$ are arbitrary constants. 

\begin{remark}
In the previous subsections, we needed to know the action of $\varphi$ on parameters before deautonomization. This is because 
 the characteristic curves themselves depend on those parameters and 
it is difficult to determine the coefficients $C_1$ and $C_2$ without that information,
while this problem does not arise in degenerated cases.   
\end{remark}

Evolution of the parameters can now be computed directly by looking at the coordinates of the blowup points. 
For example, from \eqref{eq:varphi^*-map} we see that
\begin{align*}
	\bar{p}_{1}(0,\bar{e}_{1}) &= \varphi(x = x_{3}) = \left(0,C_{2}e_{3}\right)\qquad\text{ or that}\\
	\bar{p}_{3}(\bar{e}_{3},0) &= \varphi(p_{2}) = \varphi(0,e_{2}) = \left(C_{1}\frac{e_{3} e_{4}}{e_{2}e_{7}e_{8}},0\right).
\end{align*}
Other points are computed in the similar fashion and we get
\begin{equation*}
(e_{1},\ldots,e_{8})\mapsto 
\left(
C_{2}e_{3}, C_{2}e_{4}, \frac{C_{1} e_{3} e_{4}}{e_{2} e_{7} e_{8}}, \frac{C_{1} e_{3}e_{4}}{e_{1}e_{7}e_{8}},
 C_{2} e_{7}, C_{2} e_{8}, \frac{C_{1}}{e_{6}}, \frac{C_{1}}{e_{5}}   
\right),
\end{equation*}
and so $q=e_{3}e_{4}e_{5}e_{6} (e_{1}e_{2}e_{7}e_{8})^{-1}$ is invariant under this mapping. It is convenient to choose
$C_{1} = e_{5} e_{6}$ and $C_{2} = 1$. Then the parameter evolution becomes 
\begin{align*}
(e_{1},e_{2},e_{3},e_{4},e_{5},e_{6},e_{7},e_{8})\mapsto (e_{3}, e_{4}, q e_{1}, q e_{2}, e_{7}, e_{8}, e_{5}, e_{6}). 
\end{align*}
\begin{figure}[ht]
	\centering
\begin{equation*}
	\raisebox{-40pt}{\begin{tikzpicture}[
			elt/.style={circle,draw=black!100,thick, inner sep=0pt,minimum size=2mm}]
		\path 	(-2,1.3) 	node 	(a0) [elt] {}
		        (-2,-1.3) 		node 	(a1) [elt] {}
		        ( -1,0) 	node  	(a2) [elt] {}
		        ( 1,0) 	node  	(a3) [elt] {}
		        ( 2,1.3) 		node 	(a4) [elt] {}
		        ( 2,-1.3)		node 	(a5) [elt] {};
		\draw [black,line width=1pt ] (a1) -- (a2) -- (a3) -- (a5)  (a3) -- (a4)  (a0) -- (a2); 
			\node at ($(a0.east) + (-0.5,0)$) 	{$\alpha_{0}$};
			\node at ($(a1.east) + (-0.5,0)$) 	{$\alpha_{1}$};
			\node at ($(a2.east) + (-0.5,0)$) 	{$\alpha_{2}$};
			\node at ($(a3.west) + (0.5,0)$) 	{$\alpha_{3}$};		
			\node at ($(a4.west) + (0.5,0)$) 	{$\alpha_{4}$};		
			\node at ($(a5.west) + (0.5,0)$) 	{$\alpha_{5}$};		
			
						\node[red,right] at (a0) {\small $\ 1$};
						\node[red,right] at (a1) {\small $\ 1$};
						\node[red,above] at (a2.east) {\small $\ 2$};
						\node[red,above] at (a3.west) {\small $\ 2$};
						\node[red,left] at (a4) {\small $\ 1$};
						\node[red,left] at (a5) {\small $\ 1$};
			
			\end{tikzpicture}
			} \hskip1in
			\begin{aligned}
			\alpha_{0} &= \mathcal{E}_{1} - \mathcal{E}_{2}, \\
			\alpha_{1} &= \mathcal{E}_{5} - \mathcal{E}_{6}, \\
			\alpha_{2} &= \mathcal{H}_{y} - \mathcal{E}_{1} - \mathcal{E}_{5}, \\
			\alpha_{3} &= \mathcal{H}_{x} - \mathcal{E}_{3} - \mathcal{E}_{7}, \\
			\alpha_{4} &= \mathcal{E}_{3} - \mathcal{E}_{4}, \\
			\alpha_{5} &= \mathcal{E}_{7} - \mathcal{E}_{8}.
			\end{aligned}
\end{equation*}
	\caption{Affine Dynkin diagram $D_{5}^{(1)}$.}
	\label{fig:dyn-d5}
\end{figure}

In this case the symmetry root sublattice $Q(D_{5}^{(1)}) = \operatorname{Span}_{\mathbb{Z}}\{\alpha_{0},\dots,\alpha_{5}\}$ is of 
type $D_{5}^{(1)}$ and we take the simple roots 
$\alpha_{i}$ as shown on Figure~\ref{fig:dyn-d5}, and so 
\begin{equation*}
\delta = - \mathcal{K}_{\mathcal{X}} = \mathcal{D}_{0} +  \mathcal{D}_{1} + 
 \mathcal{D}_{2} +  \mathcal{D}_{3} = \alpha_{0} + \alpha_{1} + 2 \alpha_{2} + 2 \alpha_{3} + \alpha_{4} + \alpha_{5}. 
\end{equation*}
The action of $\varphi^{*}$ on $Q(D_{5}^{(1)})$ is given by 
\begin{equation*}
(\varphi)^{*}:(\bar{\alpha}_{0},\bar{\alpha}_{1},\bar{\alpha}_{2},\bar{\alpha}_{3},\bar{\alpha}_{4},\bar{\alpha}_{5})\mapsto
(-\alpha_{4},-\alpha_{5},-\alpha_{3}, \delta - \alpha_{2}, -\alpha_{0}, -\alpha_{1}),
\end{equation*} 
which is not a translation, but $(\varphi^{*})^{2}$ is,
\begin{equation*}
(\varphi^{2})^{*}:(\bar{\alpha}_{0},\bar{\alpha}_{1},\bar{\alpha}_{2},\bar{\alpha}_{3},\bar{\alpha}_{4},\bar{\alpha}_{5})\mapsto
(\alpha_{0},\ldots,\alpha_{5}) + (0,0,-1,1,0,0)\delta.
\end{equation*}

This is exactly the $q$-difference equation q-$P_{\text{VI}}$ of Jimbo-Sakai \cite{JS96}, whose 
reduction form also appeared in \cite{PNGR92}:
\begin{align*}
\varphi^2:\left\{
\ba{rcl}
\bar{x}&=&\disp \frac{e_7e_8 (\bar{y}-e_1q)(\bar{y}-e_2q)}{x(\bar{y}-e_5)(\bar{y}-e_6)}\\
\bar{y}&=&\disp \frac{e_5e_6 (x-e_3)(x-e_4)}{y(x-e_7)(x-e_8)}
\ea\right.
\end{align*}
with the parameter evolution given by
\begin{align*}
(e_{1},e_{2},e_{3},e_{4},e_{5},e_{6},e_{7},e_{8})\mapsto
(q e_{1},q e_{2},q e_{3},q e_{4},e_{5},e_{6},e_{7},e_{8}).
\end{align*}

We also remark that the actions of $\varphi_{*}$ 
on $\operatorname{Pic}(\mathcal{X})$ 
can be written in the generators of $\tilde{W}(D_{5}^{(1)})$ as
\begin{align*}
\varphi_{*} &= \sigma\, w_{5} w_{4} w_{2}\, w_{1} w_{0} w_{2}\, w_{1} w_{0},
\end{align*}
where $\sigma$ is the automorphism of the $D_{5}^{(1)}$ Dynkin diagram that in the standard cycle notation can be written as
$\sigma = (\alpha_{0} \alpha_{5} \alpha_{1} \alpha_{4})(\alpha_{2} \alpha_{3})$.

\section{Elliptic difference systems with symmetry groups of type $D_{6}^{(1)}$ and
 $(A_{1}+A_{1}+A_{1})^{(1)}$}\label{sec:Special}

We now return to the mapping considered in Section~\ref{Sect_generic} that is obtained as a deautonomization of
our chosen QRT map and a choice of the generic elliptic fiber. This mapping $\varphi$ is given by 
\eqref{eq:ell_xyup} with the evolution of parameters given by \eqref{eq:phi-act_points}. Since
$\varphi$ does not induce a translation on the symmetry sublattice, $\varphi$ is not an elliptic difference system,
the parameters do not change translationally when we iterate the mapping. However, imposing certain constraints on the 
parameters makes it an elliptic difference system. Such constraints select a subfamily 
$\tilde{\mathcal{A}}\subset \mathcal{A}$ whose symmetry group does not appear explicitly in the Sakai classification scheme \cite{Sakai01}, since this is no longer a generic situation. 
In this section we consider two different examples of such constraints. 
We use the same notation as in Section~\ref{Sect_generic}.

\subsection{Restrictions with $D_{6}^{(1)}$ symmetry}\label{ssec:D6}
We know that $(\varphi^{4})^{*}$ acts on the symmetry sublattice $Q$ as a translation, see~\eqref{eq:f4-act}. 
In this section we investigate the constraints that we need to impose on the parameters
so that $(\varphi^{2})^{*}$ becomes a translation as well.

Via the embedding $\iota:\mathbb{T}\to \mathcal{X}$, \eqref{eq:f4-act} becomes an 
actual translation on $\mathbb{T}$,
\begin{equation*}
(\bar{a}_{0},...,\bar{a}_{8})=(a_{0},...,a_{8})+(0,2,-2,1,0,-1,0,1,0)d,
\end{equation*}
where $a_{i} = \iota^{*}(\alpha_{i})$, $d = \iota^{*}(\delta)$, and
$\bar{a}_{i} = \iota^{*}\circ(\varphi^{4})^{*}(\bar{\alpha}_{i})$.
Then, for $(\varphi^{2})^{*}$ to become a translation on $\mathbb{T}$, it has to be
\begin{align}\label{eq:half_trans}
(\bar{a}_{0},...,\bar{a}_{8})=
(a_{0},...,a_{8})+(0,2,-2,1,0,-1,0,1,0)\frac{d}{2}, 
\end{align}
where now $\bar{a}_{i} = \iota^{*}\circ(\varphi^{2})^{*}(\bar{\alpha}_{i})$.

Using \eqref{eq:varphi^*-map} we compute the action of $(\varphi^{2})^{*}$ on $\operatorname{Pic}(\mathbb{T})$
to be 
\begin{equation}\label{eq:act_points2}
\begin{aligned}
\bar{h}_x &= 5h_x+2h_y-e_{1256}-2e_{3478}, &\quad \bar{h}_y &= 2h_x+h_y-e_{3478},\\
\bar{e}_1 &= 2h_x+h_y-e_{23478},&\quad \bar{e}_2 &= 2h_x+h_y-e_{13478},\\ 
\bar{e}_3 &= h_x-e_4,&\quad  \bar{e}_4 &= h_x-e_3,\\
\bar{e}_5 &= 2h_x+h_y-e_{34678},&\quad \bar{e}_6 &= 2h_x+h_y-e_{34578},\\ 
\bar{e}_7 &= h_x-e_8,&\quad  \bar{e}_8&=h_x-e_7,
\end{aligned}
\end{equation}
where we have abbreviated $e_{i_1}+e_{i_2}+\dots+e_{i_s}$ as $e_{i_1i_2\dots i_s}$.
From \eqref{eq:a8roots} we get
\begin{equation}\label{eq:ais}
\begin{aligned}
a_0 &= e_1-e_2,  &  a_1 &= h_x-h_y,  &  a_2 &= h_y-e_1-e_2,  &  a_3 &= e_2-e_3,\\
a_4 &= e_3-e_4,  &  a_5 &= e_4-e_5,  &  a_6 &= e_5-e_6,  &      a_7 &= e_6-e_7,\\
a_8 &= e_7-e_8,  &  d &= 2h_x + \rlap{$2h_y  -e_{12345678},$}.
\end{aligned}	
\end{equation}
%
Using \eqref{eq:ais} and \eqref{eq:act_points2} in \eqref{eq:half_trans}
we see that, for $(\varphi^{2})^{*}$ to become a translation on $\mathbb{T}$,
the following constraints (on $\mathbb{T}$) must hold:
\begin{align}\label{eq:cond_phi2}
e_1+e_2-e_5-e_6=0, \quad e_3+e_4-e_7-e_8=0.
\end{align}

\begin{remark}
	It is important to note that condition \eqref{eq:cond_phi2} holds on $\mathbb{T}$  and not on 
		$\operatorname{Pic}(\mathcal{X})$, where similar expressions
		\begin{equation*}
			\mathcal{E}_1+\mathcal{E}_2-\mathcal{E}_5-\mathcal{E}_6=0,\quad  
			\mathcal{E}_3+\mathcal{E}_4-\mathcal{E}_7-\mathcal{E}_8=0
		\end{equation*}
		do not hold.
\end{remark}

Under constraints \eqref{eq:cond_phi2} $\varphi^2$ becomes
an elliptic difference equation.
Its explicit expression is obtained by iterating the mapping
\eqref{eq:ell_xyup} twice and noticing that, under constraints \eqref{eq:cond_phi2},
the constant $C_{1}$ in \eqref{eq:ell_xyup} and the mapping $\bar{\iota}$ given by 
\eqref{eq:FGup-phi} become
\begin{equation*}
C_{1} = G(e_{3}) = y_{3},\qquad \bar{\iota}(e_{i}) = (G(e_{i} - (h_{x} - e_{3} - e_{4})), F(e_{i})).
\end{equation*}
Using this and the parameter evolution under $\varphi$ given by \eqref{eq:phi-act_points}
we get the follownig explicit expression for $\varphi^2$,
\begin{equation}
\left\{	
\begin{aligned}
	\bar{x} &= x_{2} \frac{(\bar{y} - y_{2}')}{(\bar{y} - y_{6}')}\cdot 
	\frac{(y_{1}' - y_{6}') (\bar{y} - y_{5}') x - (y_{5}' - y_{6}') (\bar{y} - y_{1}') x_{6}}{
	(y_{1}' - y_{2}') (\bar{y} - y_{5}') x - (y_{5}' - y_{2}')(\bar{y} - y_{1}') x_{2}},\\[3pt]
	\bar{y} &= y_{3}
\frac{(x - x_{3})}{(x - x_{7})} \cdot
\frac{(x_{4} - x_{7}) (x - x_{8}) y - (x_{8} - x_{7})(x - x_{4}) y_{7}}{
(x_{4} - x_{3}) (x - x_{8}) y - (x_{8} - x_{3}) (x - x_{4}) y_{3}},
\end{aligned}	
\right.
\end{equation}
%
%
where $y_i'=G\left(e_{i}-(h_{x}-e_{3}-e_{4})\right)$.

Looking at the above expressions we observe that it is convenient to add to our dynamic a translation 
$\tau: \mathbb{T}\to \mathbb{T}$ given by $\tau(u) = \bar{u} = u - (h_{x} - e_{3} - e_{4})$ to get the 
following commutative diagram:
\begin{center}
\begin{tikzpicture}[>=stealth,baseline=0cm]
	\begin{scope}
	\node (T1)  at (0,0) {$\mathbb{T}$}; 	
	\node (T2)  at (2.5,0) {$\mathbb{T}$}; 	
	\node (X1)  at (0,1.5) {$\mathcal{X}$}; 	
	\node (X2)  at (2.5,1.5) {$\bar{\mathcal{X}}$}; 	
	\draw[->] (X1)--(X2) node [above,align=center,midway] {\scriptsize $\varphi^{2}$};
	\draw[->] (T1)--(T2) node [above,align=center,midway] {\scriptsize $\tau$};
	\draw[->] (T1)--(X1) node [left,align=center,midway] {\scriptsize $\iota$};
	\draw[->] (T2)--(X2) node [right,align=center,midway] {\scriptsize $\bar{\iota}$};
	\end{scope}
	\begin{scope}[xshift=5cm]
	\node (T1)  at (0,0) {$\operatorname{Pic}(\mathbb{T})$}; 	
	\node (T2)  at (3.5,0) {$\operatorname{Pic}(\mathbb{T})$}; 	
	\node (X1)  at (0,1.5) {$\operatorname{Pic}(\mathcal{X})$}; 	
	\node (X2)  at (3.5,1.5) {$\operatorname{Pic}(\bar{\mathcal{X}})$}; 	
	\draw[->] (X2)--(X1) node [above,align=center,midway] {\scriptsize $(\varphi^{2})^{*}$};
	\draw[->] (T2)--(T1) node [above,align=center,midway] {\scriptsize $\tau^{*}$};
	\draw[->] (X1)--(T1) node [left,align=center,midway] {\scriptsize $\iota^{*}$};
	\draw[->] (X2)--(T2) node [right,align=center,midway] {\scriptsize $\bar{\iota}^{\,*}$};
	\end{scope}
\end{tikzpicture}.
\end{center}
Then 
\begin{alignat*}{2}
i^{*}\circ (\varphi^{2})^{*} (\bar{\mathcal{E}}_{i}) &= \tau^{*}\circ \bar{\iota}^{*}(\mathcal{E}_{i}) & &= 
\bar{e}_{i} + (h_{x} - e_{3} - e_{4}),\\
i^{*}\circ (\varphi^{2})^{*} (\bar{\mathcal{H}}_{x}) &= \tau^{*}\circ \bar{\iota}^{*}(\mathcal{H}_{x}) & &= 
\bar{h}_{x} + 2(h_{x} - e_{3} - e_{4}),\\
i^{*}\circ (\varphi^{2})^{*} (\bar{\mathcal{H}}_{y}) &= \tau^{*}\circ \bar{\iota}^{*}(\mathcal{H}_{y}) & &= 
\bar{h}_{y} + 2(h_{x} - e_{3} - e_{4}),
\end{alignat*}
and so the new transformation of parameters is given by
\begin{align*}
&\bar{h}_x=h_x+d,\qquad \bar{h}_y=h_y,\\
&\bar{e}_1=e_1+d/2,\qquad \bar{e}_2=e_2+d/2,\qquad \bar{e}_3=e_3,\qquad \bar{e}_4=e_4,\\ 
&\bar{e}_5=e_5+d/2,\qquad \bar{e}_6=e_6+d/2,\qquad \bar{e}_7=e_7,\qquad \bar{e}_8=e_8.
\end{align*}
The curve $\bar{\mathcal{C}}$ is then parameterized by 
\begin{align}\label{eq:changeC}
\bar{F}(\bar{u})=\frac{[\bar{u}-\bar{e}_1][\bar{u}-\bar{h}_x+\bar{e}_1]}{[\bar{u}-\bar{e}_5][\bar{u}-\bar{h}_x+\bar{e}_5]}, \quad
\bar{G}(\bar{u})=\frac{[\bar{u}-\bar{e}_4][\bar{u}-\bar{h}_y+\bar{e}_4]}{[\bar{u}-\bar{e}_8][\bar{u}-\bar{h}_y+\bar{e}_8]}.
\end{align}
%
%

\begin{remark}
%
The translation $\tau$ on $\mathbb{T}$ 
is \emph{non-autonomous}, at each step the 
direction can change, and so 
$\bar{\bar{u}} = \bar{\tau}(\bar{u})$ should be computed as 
\begin{equation*}
\bar{\bar{u}}= \bar{\tau}(\bar{u}) = \bar{u}-(\bar{h}_x-\bar{e}_3-\bar{e}_4).
\end{equation*}
\end{remark}

\begin{theorem}
The subgroup of elements in $W(E_8^{(1)})$ preserving condition \eqref{eq:cond_phi2}
is isomorphic to the extended affine Weyl group $\widetilde{W}(D_6^{(1)})$. We can take
generators of ${W}(D_6^{(1)})$ to be the roots
\begin{alignat*}{2}
	\beta_{0} &= \mathcal{E}_{1} - \mathcal{E}_{2} &&= \alpha_{0},\\
	\beta_{1} &= \mathcal{E}_{5} - \mathcal{E}_{6} &&= \alpha_{6}, \\
	\beta_{2} &= \mathcal{H}_{y} - \mathcal{E}_{1} - \mathcal{E}_{5} &&= \alpha_{2} + \alpha_{3} + \alpha_{4} + \alpha_{5},\\
\beta_{3} &= \mathcal{H}_{x} - \mathcal{H}_{y} &&= \alpha_{1},\\
	\beta_{4} &= \mathcal{H}_{y} - \mathcal{E}_{3} - \mathcal{E}_{7}  &&= \alpha_{1} + 
	\alpha_{2} + 2\alpha_{3} + \alpha_{4} + \alpha_{5} + \alpha_{6} + \alpha_{7},\\
	\beta_{5} &= \mathcal{E}_{3} - \mathcal{E}_{4} &&= \alpha_{4},\\
	\beta_{6} &= \mathcal{E}_{7} - \mathcal{E}_{8} &&= \alpha_{8},
\end{alignat*}
whose configuration is described by the Dynkin diagram of type $D_{6}^{(1)}$,
\begin{center}
       \begin{tikzpicture}[
			elt/.style={circle,draw=black!100,thick, inner sep=0pt,minimum size=2mm}]
		\path 	(-2,1.3) 	node 	(g0) [elt] {}
		        (-2,-1.3) 	node 	(g1) [elt] {}
		        ( -1,0) 	node  	(g2) [elt] {}
		        (0.5,0) 	node  	(g3) [elt] {}
		        ( 2,0) 		node  	(g4) [elt] {}
		        ( 3,1.3) 	node 	(g5) [elt] {}
		        ( 3,-1.3)	node 	(g6) [elt] {};
		\draw [black,line width=1pt ] (g1) -- (g2) -- (g3) -- (g4) -- (g5)  (g0) -- (g2)  (g4) -- (g6); 
			\node at ($(g0.east) + (-0.5,0)$) 	{$\beta_{0}$};
			\node at ($(g1.east) + (-0.5,0)$) 	{$\beta_{1}$};
			\node at ($(g2.east) + (-0.6,0)$) 	{$\beta_{2}$};
			\node at ($(g3.south) + (0,-0.3)$) 	{$\beta_{3}$};		
			\node at ($(g4.west) + (0.6,0)$) 	{$\beta_{4}$};		
			\node at ($(g5.west) + (0.5,0)$) 	{$\beta_{5}$};		
			\node at ($(g6.west) + (0.5,0)$) 	{$\beta_{6}$};		

						\node[red,right] at (g0) {\small $\ 1$};
						\node[red,right] at (g1) {\small $\ 1$};
						\node[red,above] at (g2.east) {\small $\ 2$};
						\node[red,above] at (g3) {\small $\ 2$};
						\node[red,above] at (g4.west) {\small $\ 2$};
						\node[red,left] at (g5) {\small $\ 1$};
						\node[red,left] at (g6) {\small $\ 1$};

			\end{tikzpicture}.
\end{center}
The extended affine Weyl group is 
$\widetilde{W}(D_{6}^{(1)}) = \operatorname{Aut}(D_{6}^{(1)}) \ltimes W(D_{6}^{(1)})$, where 
the group of $D_{6}^{(1)}$ Dynkin diagram automorphism is isomorphic to the dihedral group 
$\mathbb{D}_{4}$,
\begin{equation*}
\operatorname{Aut}(D_{6}^{(1)}) \simeq \mathbb{D}_{4} = \langle \sigma_{1}, \sigma_{2} \mid 
\sigma_{1}^{4} = \sigma_{2}^{2} = (\sigma_{2}\sigma_{1})^{2} = e \rangle, 
\end{equation*}
with the generating automorphisms $\sigma_{1}$ and $\sigma_{2}$, when written in the standard cycle notation,
given by
\begin{alignat*}{2}
\sigma_{1} &= (\beta_{0}\beta_{5}\beta_{1}\beta_{6})(\beta_{2}\beta_{4}) & &= 
(\mathcal{E}_{1} \mathcal{E}_{3} \mathcal{E}_{5} \mathcal{E}_{7})
(\mathcal{E}_{2} \mathcal{E}_{4}\mathcal{E}_{6} \mathcal{E}_{8}),\\
\sigma_{2} &= (\beta_{0}\beta_{1})
& &= (\mathcal{E}_{1}\mathcal{E}_{5})(\mathcal{E}_{2}\mathcal{E}_{6})(\mathcal{E}_{3}\mathcal{E}_{7})
(\mathcal{E}_{4}\mathcal{E}_{8}).
\end{alignat*}
The induced action $\varphi^{*}$ is 
\begin{multline*}
\varphi^{*}:(\bar{\beta}_{0},
\ldots,\bar{\beta}_{6}) \mapsto
(-\beta_{5},-\beta_{6},-\beta_{3}-\beta_{4},\beta_{3} + 2 \beta_{4} + \beta_{5} + \beta_{6},\\  \beta_{0} + \beta_{1} + \beta_{2} + \beta_{3},
-\beta_{0},-\beta_{1}),
\end{multline*}	
which is not a translation on the sublattice $Q_{B}=\operatorname{Span}_{\mathbb Z}\{\beta_{0},\dots,\beta_{6}\}$, but 
$(\varphi^{2})^{*}$ is,
\begin{equation}\label{eq:d6-act}
(\varphi^{2})^{*}:(\bar{\beta}_{0},\ldots,\bar{\beta}_{6})\mapsto
(\beta_{0},\ldots,\beta_{6})+(0,0,-1,1,0,0,0)\delta,	
\end{equation}	
where $\delta$ now can be represented in terms of the simple roots $\beta_{i}$ as
\begin{equation*}
\delta=-\mathcal{K}_{\mathcal{X}}=\beta_{0} + \beta_{1} + 2 \beta_{2} + 2 \beta_{3} + 2 \beta_{4} + \beta_{5} + \beta_{6}.
\end{equation*}
The actions of $(\varphi)_{*}$ and $(\varphi^{2})_{*}$ on $\operatorname{Pic}(\mathcal{X})$ 
can be written in terms of the generators of $\widetilde{W}(D_{6}^{(1)})$ as
\begin{align*}
\varphi_{*} &= \sigma_{1}^{3}\, w_{\beta_6} w_{\beta_5} w_{\beta_3}\, w_{\beta_2} w_{\beta_1} w_{\beta_0}\, 
 w_{\beta_2} w_{\beta_1} w_{\beta_0},\\
(\varphi^{2})_{*} &= \sigma_{1}^{2}\, w_{\beta_3} w_{\beta_4} w_{\beta_6}\, w_{\beta_5} w_{\beta_4} w_{\beta_3}\, 
w_{\beta_2} w_{\beta_1} w_{\beta_0}\, w_{\beta_2},
\end{align*}
where $\sigma_{1}^{2}$ is the reflection symmetry of the Dynkin diagram
exchanging $\beta_{0}$ with $\beta_{1}$ and $\beta_{5}$ with $\beta_{6}$.
\end{theorem}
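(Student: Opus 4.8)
The plan is to recast the entire statement in lattice-theoretic terms and to identify the stabilizer with an orthogonal-complement construction inside the $E_8^{(1)}$ root lattice. Write $v_1 = \mathcal{E}_1 + \mathcal{E}_2 - \mathcal{E}_5 - \mathcal{E}_6$ and $v_2 = \mathcal{E}_3 + \mathcal{E}_4 - \mathcal{E}_7 - \mathcal{E}_8$. A one-line computation with the intersection form gives $v_1^2 = v_2^2 = -4$, $v_1 \bullet v_2 = 0$, and $v_i \bullet \mathcal{K}_{\mathcal{X}} = 0$, so $L := \mathbb{Z}v_1 \oplus \mathbb{Z}v_2$ is a rank-two sublattice of the $E_8^{(1)}$ root lattice $Q = (-\mathcal{K}_{\mathcal{X}})^{\perp}$. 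Under the period map $\iota^*$ of Section~\ref{Sect_generic}, condition \eqref{eq:cond_phi2} reads exactly $\iota^*(v_1) = \iota^*(v_2) = 0$, i.e. the period point annihilates $L$. Since $w \in W(E_8^{(1)})$ acts on period points by precomposition with its Picard action and fixes $-\mathcal{K}_{\mathcal{X}}$, the locus $\{\chi \mid \chi|_L = 0\}$ is preserved precisely when $w$ stabilizes the rational span $L_{\mathbb{Q}}$ as a subspace; here the genericity assumption on $\mathbf{a} \in \widetilde{\mathcal{A}}$ is what makes this condition necessary, not merely sufficient. Thus the conceptually decisive first step is to prove that the subgroup preserving \eqref{eq:cond_phi2} equals $\operatorname{Stab}_{W(E_8^{(1)})}(L_{\mathbb{Q}})$.

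Next I would analyze this stabilizer through the restriction homomorphism $\rho \colon \operatorname{Stab}(L_{\mathbb{Q}}) \to O(L)$. Its kernel is the pointwise stabilizer of $L$, which by the theory of reflection subgroups (the pointwise stabilizer of a subspace in a crystallographic reflection group is generated by the reflections it contains) is generated by the reflections $w_\alpha$ in roots $\alpha \in Q$ with $\alpha \perp L$. I would then show this orthogonal root system is exactly of type $D_6^{(1)}$ with simple roots $\beta_0, \dots, \beta_6$: each $\beta_i$ is a root orthogonal to both $v_1$ and $v_2$, a rank count gives $\operatorname{rk}(L^{\perp} \cap Q) = 9 - 2 = 7$, and a direct enumeration confirms that the $\beta_i$ already account for all roots orthogonal to $L$. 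For the image of $\rho$, note that $L \cong \langle -4 \rangle \oplus \langle -4 \rangle$ has exactly four vectors of norm $-4$, namely $\pm v_1, \pm v_2$, so $O(L)$ is the group of signed permutations of $\{v_1, v_2\}$, which is $\mathbb{D}_4$ and matches $\operatorname{Aut}(D_6^{(1)})$. Realizing all of $O(L)$ by honest elements of the stabilizer (for instance $\varphi^*$ sends $v_1 \mapsto -v_2$, $v_2 \mapsto v_1$, an order-four rotation, while the exceptional-divisor permutation defining $\sigma_2$ gives a reflection) yields the split exact sequence $1 \to W(D_6^{(1)}) \to \operatorname{Stab}(L_{\mathbb{Q}}) \to \mathbb{D}_4 \to 1$, i.e. $\operatorname{Stab}(L_{\mathbb{Q}}) \cong \widetilde{W}(D_6^{(1)})$. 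This identification of the orthogonal root system together with the surjectivity of $\rho$ onto $\mathbb{D}_4$ is the part I expect to demand the most care, since one must rule out extra orthogonal roots and confirm that the pointwise stabilizer is genuinely a reflection group.

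The remaining structural assertions are verifications. I would confirm that $\{\beta_i\}$ realizes the affine $D_6^{(1)}$ diagram by computing $\beta_i^2 = -2$ and the pairwise products $\beta_i \bullet \beta_j \in \{0,1\}$ from the intersection form, recovering the two-fork shape with its marks $(1,1,2,2,2,1,1)$, and checking $\delta = -\mathcal{K}_{\mathcal{X}} = \beta_0 + \beta_1 + 2\beta_2 + 2\beta_3 + 2\beta_4 + \beta_5 + \beta_6$. For the diagram automorphisms I would verify that the stated cycles on the $\mathcal{E}_i$ (fixing $\mathcal{H}_x, \mathcal{H}_y$) induce the claimed permutations of the $\beta_i$ and satisfy the dihedral relations $\sigma_1^4 = \sigma_2^2 = (\sigma_2 \sigma_1)^2 = e$.

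Finally, the action of $\varphi^*$ on the $\beta_i$ follows by applying \eqref{eq:varphi^*-map} to each $\beta_i$ written in the $\mathcal{H}, \mathcal{E}$ basis and re-expanding in the $\beta_j$; for example $\varphi^*(\beta_0) = (\mathcal{H}_x - \mathcal{E}_3) - (\mathcal{H}_x - \mathcal{E}_4) = \mathcal{E}_4 - \mathcal{E}_3 = -\beta_5$, and similarly for the rest. Composing yields $(\varphi^2)^*(\beta_i) = \beta_i + c_i \delta$ with $c = (0,0,-1,1,0,0,0)$, which, because $\delta$ is the null root with $\delta \bullet \beta_i = 0$, is a genuine translation on $Q_B = \operatorname{Span}_{\mathbb{Z}}\{\beta_i\}$, proving \eqref{eq:d6-act}. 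The decompositions of $\varphi_*$ and $(\varphi^2)_*$ into the generators of $\widetilde{W}(D_6^{(1)})$ are then obtained by the same reduced-word algorithm (Lemma~3.11 of \cite{Kac}) already used in Section~\ref{Sect_generic}, with the diagram-automorphism factor read off from the induced action on $L$.
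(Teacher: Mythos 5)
Your reduction of the problem to computing $\operatorname{Stab}_{W(E_8^{(1)})}(L_{\mathbb{Q}})$ for $L=\mathbb{Z}\gamma_1\oplus\mathbb{Z}\gamma_2$ matches the paper's first step, but from there you take a genuinely different route (pointwise stabilizer via a Steinberg-type theorem, plus the restriction map $\rho\colon\operatorname{Stab}(L_{\mathbb{Q}})\to O(L)$), and that route has a concrete gap: you never confront the fact that $Q_B\oplus Q_G$ has index greater than one in $Q$. The base change \eqref{eq:lattice-base-change} has denominators $2$ and $4$, so the actions of a stabilizing element on the two orthogonal pieces $Q_B$ and $Q_G$ are \emph{coupled} by integrality; not every pair consisting of a root-system automorphism of $Q_B$ and an element of $O(L)$ extends to an automorphism of $Q$. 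The paper's proof spends its most delicate passage---\eqref{eq:lattice-base-change}, \eqref{eq:cond-gamma}, and the observation that $\gamma_1\pm\gamma_2\notin 2Q$---precisely on this coupling, and your argument has no substitute for it.

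The gap surfaces at your claim that $\operatorname{im}\rho=O(L)\simeq\mathbb{D}_4$. Your two witnesses do not generate $O(L)$: by \eqref{eq:varphi^*-map}, $\varphi^*$ sends $(\gamma_1,\gamma_2)\mapsto(-\gamma_2,\gamma_1)$, an order-four rotation $r$ with $r^2=-\operatorname{id}$, while $\sigma_2$ acts on $L$ as $-\operatorname{id}=r^2$ exactly --- in rank two this has determinant $+1$ and is \emph{not} a reflection, contrary to what you assert. So $\langle\varphi^*|_L,\sigma_2|_L\rangle\simeq\mathbb{Z}/4\subsetneq O(L)$, and surjectivity of $\rho$ is unproven; indeed the integrality constraints are exactly what decide whether an honest reflection of $L$ (say $\gamma_1\leftrightarrow\gamma_2$) is realized by any element of $W(E_8^{(1)})$ preserving \eqref{eq:cond_phi2}, and a check of $w^*(\alpha_2)-\alpha_2=-(w^*(\gamma_1)-\gamma_1)/2$ against $(\gamma_1\pm\gamma_2)/2\notin Q$ shows this is far from automatic. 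Without that analysis, your exact sequence only bounds the stabilizer between $\mathbb{Z}/4\ltimes W(D_6^{(1)})$ and $\mathbb{D}_4\ltimes W(D_6^{(1)})$, and the matching of $O(L)$ with $\operatorname{Aut}(D_6^{(1)})$ is a numerical coincidence of orders rather than an identification of groups. (A secondary, repairable issue: the Steinberg fixed-point theorem you invoke for the kernel of $\rho$ is stated for finite reflection groups; for the linear action of $W(E_8^{(1)})$ on $Q$ you must split into the finite part plus translations and verify separately that the translations fixing $L$ pointwise are exactly those by $Q(D_6)$, a lattice computation you do not perform. The paper sidesteps this entirely by appealing to \S5.10 of \cite{Kac} for the automorphism group of the affine root lattice $Q_B$.)
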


\begin{proof}
First, note that the generators $w_{\beta_{i}}$ and $\sigma_{j}$ of $\widetilde{W}(D_{6}^{(1)})$ act on the 
parameters as follows:	
\begin{equation}\label{D6_parameters} 
	\begin{aligned}
		w_{\beta_{0}}&:\quad & \bar{e}_{1} &= e_{2},\ & \bar{e}_{2} &= e_{1},\\
		w_{\beta_{1}}&:\quad & \bar{e}_{5} &= e_{6},\ & \bar{e}_{6} &= e_{5},\\
		w_{\beta_{2}}&:\quad & \bar{h}_{x} &= h_{x} + h_{y} - e_{1} - e_{5},\ & \bar{e}_{1} &= h_{y} - e_{5},\ & \bar{e}_{5} &= h_{y} - e_{1},\\
		w_{\beta_{3}}&:\quad & \bar{h}_{x} &= h_{y},\ & \bar{h}_{y} &= h_{x},\\
		w_{\beta_{4}}&:\quad & \bar{h}_{x} &= h_{x} + h_{y} - e_{3} - e_{7},\ & \bar{e}_{3} &= h_{y} - e_{7},\ & \bar{e}_{7} &= h_{y} - e_{3},\\
		w_{\beta_{5}}&:\quad & \bar{e}_{3} &= e_{4},\ & \bar{e}_{4} &= e_{3},\\
		w_{\beta_{6}}&:\quad & \bar{e}_{7} &= e_{8},\ & \bar{e}_{8} &= e_{7},\\
		\sigma_{1}&:    & \rlap{\hskip5pt$(\bar{e}_{1},\bar{e}_{2},\bar{e}_{3},\bar{e}_{4},\bar{e}_{5},\bar{e}_{6},\bar{e}_{7},\bar{e}_{8})
		=(e_{7},e_{8},e_{1},e_{2},e_{3},e_{4},e_{5},e_{6})$,}\\
		\sigma_{2}&:    & \rlap{\hskip5pt$(\bar{e}_{1},\bar{e}_{2},\bar{e}_{3},\bar{e}_{4},\bar{e}_{5},\bar{e}_{6},\bar{e}_{7},\bar{e}_{8})
		=(e_{5},e_{6},e_{7},e_{8},e_{1},e_{2},e_{3},e_{4})$,}\\
	\end{aligned}
\end{equation}
and so they preserve constraints \eqref{eq:cond_phi2}.
%
%
%
%
The decomposition of the actions of $(\varphi)_{*}$ and $(\varphi^{2})_{*}$ on $\operatorname{Pic}(\mathcal{X})$ 
is obtained in the standard way.

The main part of the proof is to show that $\widetilde{W}(D_6^{(1)})$-subgroup 
of $W(E_8^{(1)})$, described in the statement of the theorem,	 is the full stabilizer 
of constraints \eqref{eq:cond_phi2}, i.e., 
if $w^*:\operatorname{Pic}(\bar{\mathcal{X}})\to \operatorname{Pic}(\mathcal{X}) \in W(E_8^{(1)})$ 
under the canonical identification $\operatorname{Pic}(\bar{\mathcal{X}})\simeq \operatorname{Pic}(\mathcal{X})$ 
is such that, when pulled back to 
$\operatorname{Pic}(\mathbb{T})$, $w^{*}$ preserves \eqref{eq:cond_phi2}, 
then $w\in \widetilde{W}(D_6^{(1)})$. 

For that, in addition to the roots $\beta_{i}$, let us consider two more elements $\gamma_{1}$ and $\gamma_{2}$ in the full symmetry 
sublattice $Q = Q(E_{8}^{(1)})$ that are given by 
\begin{equation}\label{eq:gamma-alpha}
\begin{aligned}
	\gamma_{1} &= \mathcal{E}_1+\mathcal{E}_2-\mathcal{E}_5-\mathcal{E}_6 = \alpha_{0} + 2 (\alpha_{3} + \alpha_{4} + \alpha_{5}) + \alpha_{6},\\
	\gamma_{2} &= \mathcal{E}_3+\mathcal{E}_4-\mathcal{E}_7-\mathcal{E}_8 = \alpha_{4} + 2 (\alpha_{5} + \alpha_{6} + \alpha_{7}) + \alpha_{8}.
\end{aligned}	
\end{equation}
Then
\begin{equation}
	\begin{aligned}\label{eq:lattice-base-change}
		\alpha_{0} &= \beta_{0},\\
		\alpha_{1} &= \beta_{3},\\
		\alpha_{2} &= \beta_{2} + (\beta_{0} + \beta_{1} - \gamma_{1})/2, \\
		\alpha_{3} &= (-3 \beta_{0} - \beta_{1} - 2 \beta_{2} + 2 \beta_{4} - \beta_{5} + \beta_{6} + \gamma_{1} - \gamma_{2})/4,\\
		\alpha_{4} &= \beta_{5},\\
		\alpha_{5} &= (\beta_{0} - \beta_{1} + 2 \beta_{2} - 2 \beta_{4} - 3 \beta_{5} - \beta_{6} + \gamma_{1} + \gamma_{2})/4,\\
		\alpha_{6} &= \beta_{1},\\
		\alpha_{7} &= (-\beta_{0} - 3 \beta_{1} - 2 \beta_{2} + 2 \beta_{4} + \beta_{5} - \beta_{6} - \gamma_{1} + \gamma_{2})/4,\\
		\alpha_{8} &= \beta_{6}.
	\end{aligned}	
\end{equation}
Thus, $\beta_{i}$ and $\gamma_{j}$ do not form a lattice basis of $Q$, but they do form a basis of the associated vector space 
$Q^{\mathbb{Q}} = Q\otimes_{\mathbb{Z}} \mathbb{Q}$. Put $Q_{G} = \operatorname{Span}_{\mathbb{Z}}\{ \gamma_{1},\gamma_{2} \}$ and
let $\operatorname{Pic}^{\mathbb{Q}}(\mathcal{X})$, $Q^{\mathbb{Q}}_{B}$, $Q^{\mathbb{Q}}_{G}$ be the associated vector spaces.
Then $Q^{\mathbb{Q}} = Q^{\mathbb{Q}}_{B} \oplus Q^{\mathbb{Q}}_{G}$.

Consider now the commuting diagram
\begin{center}
\begin{tikzcd}[row sep=1.0cm,column sep=1.5cm]
   \operatorname{{\rm Pic}}(\mathcal{X})
     \arrow[transform canvas={yshift=.5ex}]{r}{w_*}
     \arrow[transform canvas={yshift=-.5ex},leftarrow]{r}[swap]{w^*}
     \arrow{d}[swap]{\iota^*}
 & \operatorname{{\rm Pic}}(\bar{\mathcal{X}})\simeq 
            \operatorname{{\rm Pic}}(\mathcal{X})
  \arrow{ld}{\bar{\iota}^*} \\
   \operatorname{{\rm Pic}}(\mathbb{T})  &  
\end{tikzcd}
\end{center}
and put $g_i=\iota^*(\gamma_i)$, $b_i=\iota^*(\beta_i)$,
 $\bar{g}_i=\bar{\iota}^*(\gamma_i)$, $\bar{b}_i=\bar{\iota}^*(\beta_i)$, then 
\begin{align*}
g_1&=e_1+e_2-e_5-e_6,\quad  g_2=e_3+e_4-e_7-e_8\\
\bar{g}_1&=\bar{e}_1+\bar{e}_2-\bar{e}_5-\bar{e}_6,\quad  
\bar{g}_2=\bar{e}_3+\bar{e}_4-\bar{e}_7-\bar{e}_8.
\end{align*}

We can write 
\begin{align*}
w^{*}(\gamma_{i}) &= k_{i0}\beta_{0} + \cdots + k_{i6}\beta_{6} + l_{i1} \gamma_{1} + l_{i2} \gamma_{2},\quad i=1,2.\\
\intertext{Pulling it back to $\operatorname{Pic}(\mathbb{T})$, we get the equations}
\bar{g}_{i} &= k_{i0} b_{0} + \cdots + k_{i6} b_{6} + l_{i1} g_{1} + l_{i2} g_{2}.
\end{align*}
Requiring that the pullback of $w^{*}$ on $\operatorname{Pic}(\mathbb{T})$ preserves \eqref{eq:cond_phi2}
means that $\bar{g}_{i} = 0$ whenever $g_{i} = 0$, or that $k_{i0} b_{0} + \cdots + k_{i6} b_{6} = 0$, which,
under the usual genericity assumption, means that $k_{i0} = \cdots = k_{i6} = 0$. Thus, 
$w^{*}(\mathbb{Q}^{\mathbb{Q}}_{G})\subset \mathbb{Q}^{\mathbb{Q}}_{G}$. However, since 
$w^{*}\in \mathbb{GL}(Q^{\mathbb{Q}})$, 
$w^{*}(\mathbb{Q}^{\mathbb{Q}}_{G})= \mathbb{Q}^{\mathbb{Q}}_{G}$ and hence
$w^{*}(\mathbb{Q}^{\mathbb{Q}}_{B})= \mathbb{Q}^{\mathbb{Q}}_{B}$. Further, since $w\in W(E_{8}^{(1)})$,
$w^{*}(Q) = Q$. Since $Q_{G} = Q^{\mathbb{Q}}_{G}\cap Q$, $Q_{B} = Q^{\mathbb{Q}}_{B}\cap Q$, we get that 
$w^{*}({Q}_{G})= {Q}_{G}$ and $w^{*}({Q}_{B})= {Q}_{B}$.

According to \S 5.10 of \cite{Kac}, if $w^{*}(Q_{B}) = Q_{B}$, since $Q_{B}$ is a root lattice, 
the action of $w^{*}$ on $Q_{B}$ coincides with the action of 
$w'\in \pm \widetilde{W}(D_{6}^{(1)})$. But, since $w\in W(E_{8}^{(1)})$, 
$w^{*}(-\mathcal{K}_{\mathcal{X}}) = -\mathcal{K}_{\mathcal{X}}$, which implies that  
$w'\in \widetilde{W}(D_{6}^{(1)})$. To complete the proof it remains to show that the action of 
$w^{*}$ on $Q$ coincides with the extension of the action of $(w')^{*}$ on $Q$ or, equivalently, on
$Q_{G}$. Note that the generators of $\widetilde{W}(D_{6}^{(1)})$ act on $\gamma_{i}$ as follows:
\begin{equation*}
w_{\beta_{i}}^{*}(\gamma_{j}) = \gamma_{j},\quad \sigma_{1}(\gamma_{1}) = \gamma_{2},\quad
\sigma_{1}(\gamma_{2}) = -\gamma_{1},\quad \sigma_{2}(\gamma_{i}) = -\gamma_{i}.
\end{equation*} 
Let $w_{0} = w^{-1}\circ w'$, we want to show that $w_{0}^{*}$ acts as an identity on $Q$. We do know that
$w_{0}^{*}$ acts on $Q$ and also that it acts as an identity on $Q_{B}$, so it remains to show that 
$w_{0}^{*}$ acts as an identity of $Q_{G}$ as well.

Since $w_{0}^{*}(Q_{G}) = Q_{G}$, $w_{0}^{*}(\gamma_{i}) = l_{i1}\gamma_{1} + l_{i2}\gamma_{2}$, where $l_{ij}\in \mathbb{Z}$.
We know that $w^{*}$ preserves the intersection pairing, $\gamma_{i}\bullet \gamma_{j} = - 4 \delta_{ij}$, which in turn
implies that 
\begin{equation*}
l_{11}^2+l_{12}^2=1, \quad l_{21}^2+l_{22}^2=1, \quad
l_{11}l_{21}+l_{12}l_{22}=0,
\end{equation*}
and so $w_{0}^{*}(\gamma_{1},\gamma_{2}) = (\varepsilon_{1} \gamma_{i},\varepsilon_{2}\gamma_{j})$, where 
$\varepsilon_{1} = \pm1$, $\varepsilon_{2} = \pm1$, $i,j = 1,2$ and $i\neq j$. However, $w_{0}^{*}(\alpha_{i})\in Q$
together with~\eqref{eq:lattice-base-change} requires that 
\begin{equation}\label{eq:cond-gamma}
	w_{0}^{*}(\gamma_{1}) - \gamma_{1}\in 2Q,\qquad 
	(w_{0}^{*}(\gamma_{1}) - \gamma_{1}) - (w_{0}^{*}(\gamma_{2}) - \gamma_{2})\in 4Q.
\end{equation}
Indeed, we can write
\begin{align*}
	w_{0}^{*}(\alpha_{2}) &= \alpha_{2} + (\gamma_{1} - w^{*}(\gamma_{1}))/2,\\
	w_{0}^{*}(\alpha_{3}) &= \alpha_{3} + ((w_{0}^{*}(\gamma_{1}) - \gamma_{1}) - (w_{0}^{*}(\gamma_{2}) - \gamma_{2}))/4.
\end{align*}
Combining~\eqref{eq:cond-gamma} and~\eqref{eq:gamma-alpha} and observing that $\gamma_{1}\pm \gamma_{2}\notin 2Q$, 
we see that the only possibility is 
$w^{*}(\gamma_{1}) = \gamma_{1}$ and $w^{*}(\gamma_{2}) = \gamma_{2}$, which completes the proof of the theorem.
\end{proof}

We conclude this section by giving an explicit description of a birational representation of $\widetilde{W}(D_{6}^{(1)})$. 
For this it is convenient to use a different normalization $x_{1} = y_{1} = \infty$, $x_{2} = y_{2} = 0$, instead of our usual
normalization~\eqref{eq:norm-pts}. Let
\begin{equation*}
F_{ij}(u)=\frac{[u-e_j][u+e_j-h_x]}{[u-e_i][u+e_i-h_x]}, 
\qquad G_{ij}(u)=\frac{[u-e_j][u+e_j-h_y]}{[u-e_i][u+e_i-h_y]}.
\end{equation*}
The embedding $\iota: \mathbb{T}\to \mathbb{P}^{1}\times \mathbb{P}^{1}$ for this normalization is then given by 
\begin{equation}\label{eq:FG12}
	\iota(u) = (F_{12}(u),G_{12}(u)).
\end{equation}
Changing normalization back to~\eqref{eq:norm-pts} with the embedding
of $\mathbb{T}$ given by~\eqref{eq:FG}, $\iota(u) = (F_{51}(u),G_{84}(u))$, is explicitly given by 
the coordinate transformation $\nu: (x,y)\mapsto (\tilde{x},\tilde{y})$, where
%
\begin{alignat*}{3}
&\nu:\qquad & \tilde{x}&=\frac{F_{52}(e_1)}{x-F_{12}(e_5)},\quad & 
\tilde{y}&=G_{84}(e_1)\,\frac{y-G_{12}(e_4)}{y-G_{12}(e_8)}, \\
&\nu^{-1}:\qquad & x&=F_{12}(e_5)\,\frac{\tilde{x}-F_{51}(e_2)}{\tilde{x}},\quad &  
y&=G_{12}(e_8)\,\frac{\tilde{y}-G_{84}(e_2)}{\tilde{y}-G_{84}(e_1)}.
\end{alignat*}

Using Proposition~\ref{prop_blow} then immediately gives us the following result.

\begin{corollary}\label{cor:bira-D6}
	A birational representation of the extended affine Weyl group $\widetilde{W}(D_{6}^{(1)})$ is given by the following maps:
	\begin{align*}
	w_{\beta_i}:\quad& \bar{x}=x,\ \bar{y}=y\quad (i=1,5,6);\\[3pt] 
	w_{\beta_0}:\quad& \bar{x}=\frac{1}{x},\ \bar{y}=\frac{1}{y};\\[3pt]
	w_{\beta_2}:\quad&  \bar{x}-\bar{F}_{12}(\bar{e}_5)=-\frac{\bar{F}_{12}(\bar{e}_{5}) G_{12}(e_{5})}{F_{12}(e_{5})}\cdot
	 \frac{x-F_{12}(e_5)}{y-G_{12}(e_5)},\ \bar{y}=\frac{G_{52}(e_1)y}{(y-G_{12}(e_5))};\\[3pt]
	w_{\beta_3}:\quad& \bar{x}=y,\ \bar{y}=x;\\[3pt]
	w_{\beta_4}:\quad& \frac{\bar{x}-\bar{F}_{12}(\bar{e}_3)}{\bar{x}-\bar{F}_{12}(\bar{e}_7)}
	=\frac{(x-F_{12}(e_3))(y-G_{12}(e_7))}{(x-F_{12}(e_7))(y-G_{12}(e_3))},\ \bar{y} = y; \\[3pt]
	\sigma_{1}:\quad&
	 \bar{x}=F_{78}(e_1)\,\frac{x-F_{12}(e_8)}{x-F_{12}(e_7)},\ 
	\bar{y}=G_{78}(e_1)\,\frac{y-G_{12}(e_8)}{y-G_{12}(e_7)};\\[3pt]
	\sigma_{2}:\quad&
	 \bar{x}=F_{56}(e_1)\frac{x-F_{12}(e_6)}{x-F_{12}(e_5)},\ 
	\bar{y}=G_{56}(e_1)\frac{y-G_{12}(e_6)}{y-G_{12}(e_5)},
	\end{align*}
with the transformation of parameters \eqref{D6_parameters}, where
\begin{equation*}
\bar{F}_{12}(u)=\frac{[u-\bar{e}_2][u-\bar{h}_x+\bar{e}_2]}{[u-\bar{e}_1][u-\bar{h}_x+\bar{e}_1]},\qquad \bar{G}_{12}(u)=\frac{[u-\bar{e}_2][u-\bar{h}_y+\bar{e}_2]}{[u-\bar{e}_1][u-\bar{h}_y+\bar{e}_1]}.
\end{equation*}
\end{corollary}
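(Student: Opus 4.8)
The plan is to realize each generator of $\widetilde{W}(D_6^{(1)})$ as a birational map by a direct application of Proposition~\ref{prop_blow} (together with its refinement Proposition~\ref{prop_d}), exactly as the deautonomized mapping itself was recovered in Section~\ref{Sect_generic}. Since $\widetilde{W}(D_6^{(1)})$ is generated by the reflections $w_{\beta_0},\dots,w_{\beta_6}$ and the Dynkin-diagram automorphisms $\sigma_1,\sigma_2$, and since each of these acts on $\operatorname{Pic}(\mathcal{X})$ either by the reflection formula $w_{\beta_i}^{*}(\bar{\mathcal{D}})=\mathcal{D}+(\beta_i\bullet\mathcal{D})\beta_i$ or by the prescribed permutation of the classes $\mathcal{E}_j$, the input required by Proposition~\ref{prop_blow} is available generator by generator. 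Throughout I would work in the shifted normalization $x_1=y_1=\infty$, $x_2=y_2=0$ with the embedding $\iota(u)=(F_{12}(u),G_{12}(u))$ of \eqref{eq:FG12}, so that $p_1=\iota(e_1)=(\infty,\infty)$ and $p_2=\iota(e_2)=(0,0)$, and the defining polynomials of the vertical and horizontal lines through $p_i$ are simply $x-F_{12}(e_i)$ and $y-G_{12}(e_i)$.

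For a fixed generator $w$ I would first compute $w^{*}(\bar{\mathcal{H}}_x)$, $w^{*}(\bar{\mathcal{H}}_y)$ and the images of the classes $\bar{\mathcal{H}}_x-\bar{\mathcal{E}}_i$, $\bar{\mathcal{H}}_y-\bar{\mathcal{E}}_i$ that are needed, and decompose these pullbacks into deterministic prime-divisor classes. For instance, for $w_{\beta_2}$ with $\beta_2=\mathcal{H}_y-\mathcal{E}_1-\mathcal{E}_5$ one gets $w^{*}(\bar{\mathcal{H}}_x-\bar{\mathcal{E}}_5)=\mathcal{H}_x-\mathcal{E}_5$ and $w^{*}(\bar{\mathcal{E}}_1)=\mathcal{H}_y-\mathcal{E}_5$, so that pulling back the vertical lines $V(\bar{x}-\bar{F}_{12}(\bar{e}_5))$ and $V(\bar{x}-\infty)$ yields, via $\varphi^{*}=\pi_*\circ\tilde{\varphi}^{*}\circ\bar{\pi}^{*}$, the factors $x-F_{12}(e_5)$ and $y-G_{12}(e_5)$, whence Proposition~\ref{prop_blow} gives the implicit form $\bar{x}-\bar{F}_{12}(\bar{e}_5)=C_1(x-F_{12}(e_5))/(y-G_{12}(e_5))$. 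The reflections $w_{\beta_1},w_{\beta_5},w_{\beta_6}$ only permute exceptional classes lying off the normalized coordinate axes and therefore descend to the identity on $\mathbb{P}^1\times\mathbb{P}^1$; $w_{\beta_0}$ interchanges $(\infty,\infty)$ and $(0,0)$, which through $\bar{F}_{12}=1/F_{12}$, $\bar{G}_{12}=1/G_{12}$ becomes $x\mapsto 1/x$, $y\mapsto 1/y$; and $w_{\beta_3}$ interchanges $\mathcal{H}_x$ and $\mathcal{H}_y$, hence swaps the two factors. The genuinely birational generators $w_{\beta_2},w_{\beta_4},\sigma_1,\sigma_2$ are handled by the same class computation, in which the relevant $(1,1)$-classes split as a vertical plus a horizontal line through the two distinguished points, so all factors are linear in a single variable (the determinantal formula of Remark~\ref{rem:det-form} is available should an irreducible $(1,1)$-class ever occur, but is not needed here). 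The remaining work in this step is to pin down the proportionality constants $C_1,C_2$: following the derivation of \eqref{ell_C12}, I would evaluate the map at a point whose image is forced by a relation of the form $w^{*}(\bar{\mathcal{E}}_i)=(\text{deterministic class})$ and read off $C_1,C_2$ in terms of the $F_{ij},G_{ij}$ at the appropriate $e$-parameters, with \eqref{D6_parameters} supplying the barred parameters.

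Finally, I would confirm that this assignment is a representation, i.e.\ that the listed maps satisfy the defining relations of $\widetilde{W}(D_6^{(1)})$. Rather than verifying the Coxeter and braid relations by direct composition, I would invoke the uniqueness modulo M\"obius transformations established after the definition of deautonomization: once the normalization $x_1=y_1=\infty$, $x_2=y_2=0$ and the scalings of $F_{12},G_{12}$ are fixed, each generator's map is the unique birational map inducing the prescribed action on $\operatorname{Pic}(\mathcal{X})$ and the prescribed parameter evolution. Hence for any word $w_iw_j\cdots$ both its composite map and the map attached to the corresponding element induce the same Picard action (because the $w_\bullet^{*}$ already satisfy the relations of $\widetilde{W}(D_6^{(1)})$) and the same parameter evolution, so they coincide. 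I expect the main obstacle to be precisely the constant-fixing for the mixed generators $w_{\beta_2},w_{\beta_4},\sigma_1,\sigma_2$: tracking which barred parameter enters each $F_{ij},G_{ij}$ and using the Riemann relation for $[\,\cdot\,]$ to collapse the resulting cross-ratios into the compact forms listed is the one genuinely delicate computation, while the underlying geometry and the homomorphism property are then routine.
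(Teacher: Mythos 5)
Your proposal is correct and follows essentially the same route as the paper's proof: a generator-by-generator application of Proposition~\ref{prop_blow}, decomposing the pulled-back line classes into deterministic classes and then fixing the proportionality constants from the pull-back of an exceptional class (the paper carries this out explicitly for $w_{\beta_4}$ and notes the rest are analogous, just as you do for $w_{\beta_2}$). Your added remark on why the assignment is automatically a group representation, via uniqueness of the map realizing a given Picard action modulo M\"obius transformations, is a sound supplement that the paper leaves implicit.
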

\begin{proof} As yet another illustration of Proposition~\ref{prop_blow}, we show how to obtain the formula for $\bar{x}$ for the
$w_{\beta_{4}}$ mapping. On the level of the Picard lattice, we have
\begin{equation*}
w_{\beta_{4}}^{*}(\bar{\mathcal{H}}_{x}) = \mathcal{H}_{x} + \mathcal{H}_{y} - \mathcal{E}_{3} - \mathcal{E}_{7},\quad
w_{\beta_{4}}^{*}(\bar{\mathcal{E}}_{3}) = \mathcal{H}_{y} - \mathcal{E}_{7},\quad
w_{\beta_{4}}^{*}(\bar{\mathcal{E}}_{7}) = \mathcal{H}_{y} - \mathcal{E}_{3}.
\end{equation*}
Using $\bar{D}_{1} = V(\bar{x} - \bar{x}_{3})$ and $\bar{D}_{2} = V(\bar{x} - \bar{x}_{7})$ in 
Proposition~\ref{prop_blow}, we get
\begin{align*}
w_{\beta_{4}}^{*}(D_{1}) &= \pi_{*}\circ \tilde{w}_{\beta_{4}}^{*} ((\bar{H}_{x} - \bar{E}_{3}) + \bar{E}_{3}) \\
&= \pi_{*} ((H_{x} - E_{3}) + (H_{y} - E_{7})) = V((x-x_{3})(y-y_{7})),\\
w_{\beta_{4}}^{*}(D_{2}) &= \pi_{*}\circ \tilde{w}_{\beta_{4}}^{*} ((\bar{H}_{x} - \bar{E}_{7}) + \bar{E}_{7}) \\
&= \pi_{*} ((H_{x} - E_{7}) + (H_{y} - E_{3})) = V((x-x_{7})(y-y_{3})).
\end{align*}
Thus,
\begin{align*}
\frac{\bar{x} - \bar{x}_{3}}{\bar{x} - \bar{x}_{7}} &= C \frac{(x - x_{3})(y - y_{7})}{(x - x_{7})(y - y_{3})},
\end{align*}
where we can evaluate the normalization constant $C$ using the pull-back 
$w_{\beta_{4}}^{*}(\bar{E}_2) = E_2$ as
\begin{align*}
\frac{\bar{x}_{3}}{\bar{x}_{7}} &= C \frac{x_{3} y_{7}}{x_{7} y_{3}}\quad\implies\quad
C = \frac{\bar{F}_{12}(\bar{e}_{3}) F_{12}(e_{7}) G_{12}(e_{3})}{\bar{F}_{12}(\bar{e}_{7}) F_{12}(e_{3}) G_{12}(e_{7})}  = 1,
\end{align*}
which gives the desired result. Other formulas are established in the same way.	
\end{proof}

\subsection{Restrictions with $(A_1+A_1+A_1)^{(1)}$ symmetry}

Along the same lines, we can impose more constraints on the parameters to make 
$\varphi$ itself an elliptic difference system. 
In this case we should have
\begin{align}\label{fourth_trans}
(\bar{a}_0,...,\bar{a}_8) = (a_0,...,a_8) + (0,2,-2,1,0,-1,0,1,0)\frac{d}{4},
\end{align}
where $\bar{a}_{i} = \iota^{*} \circ \varphi^{*}(\bar{\alpha}_{i})$.
This condition, together with \eqref{eq:ais} and \eqref{eq:phi-act_points}, 
results in following constraints on $\mathbb{T}$:
\begin{equation}\label{cond_phi}
	\begin{aligned}
		e_1+e_2-e_5-e_6&=0, &\quad e_3+e_4-e_7-e_8&=0,\\
		e_1+e_3-e_2-e_4&=0, &\quad e_5+e_7-e_6-e_8&=0,\\
		\rlap{\hskip-25pt$(h_x-e_1-e_2)-(h_y-e_3-e_4)=0$.} 
	\end{aligned}
\end{equation}
Note that these constraints also imply that 
\begin{equation*}
e_{1}+e_{3} = e_{5} + e_{7},\qquad e_{3} - e_{2} = e_{4} - e_{1} = e_{7} - e_{6} = e_{8} - e_{5} = (h_{y} - h_{x})/2. 
\end{equation*}
Subject to constraints \eqref{cond_phi}, $\varphi$ becomes 
an elliptic difference system \eqref{eq:ell_xyup} with
$C_1=x_2$ and $C_2=1$.

Similar to Section~\ref{ssec:D6}, if we add to our dynamic a (non-autonomous) translation 
$\tau: \mathbb{T}\to \mathbb{T}$ given by $\tau(u) = \bar{u} = u - (e_{3} - e_{2})$,
the transformation of the parameters takes a simple form
\begin{align*}
&\bar{h}_x=h_x+d/2,\qquad \bar{h}_y=h_y,\\
&\bar{e}_1=e_1+d/4,\qquad \bar{e}_2=e_2+d/4,\qquad \bar{e}_3=e_3,\qquad \bar{e}_4=e_4,\\ 
&\bar{e}_5=e_5+d/4,\qquad \bar{e}_6=e_6+d/4,\qquad \bar{e}_7=e_7,\qquad \bar{e}_8=e_8.
\end{align*}
This changes the embedding of the curve $\bar{\mathcal{C}}$, which is now then parameterized by 
\begin{align*}
\bar{F}(\bar{u})=\frac{[\bar{u}-\bar{e}_1][\bar{u}-\bar{h}_x+\bar{e}_1]}{[\bar{u}-\bar{e}_5][\bar{u}-\bar{h}_x+\bar{e}_5]}, \quad
\bar{G}(\bar{u})=\frac{[\bar{u}-\bar{e}_4][\bar{u}-\bar{h}_y+\bar{e}_4]}{[\bar{u}-\bar{e}_8][\bar{u}-\bar{h}_y+\bar{e}_8]}.
\end{align*}

Consider now the roots $\beta_{0}^{i}$, $i=1,\dots,3$, given by
\begin{alignat*}{2}
	\beta_{0}^{1} &= 2 \mathcal{H}_{y} - \mathcal{E}_{1}- \mathcal{E}_{2}- \mathcal{E}_{5}- \mathcal{E}_{6} 
	&&= \alpha_{0} + 2(\alpha_{2} + \alpha_{3} + \alpha_{4} + \alpha_{5}) + \alpha_{6}, \\
	\beta_{0}^{2} &= \mathcal{E}_{1}- \mathcal{E}_{2}- \mathcal{E}_{3}+ \mathcal{E}_{4} &&= \alpha_{0} - \alpha_{4}, \\
	\beta_{0}^{3} &= \mathcal{E}_{5}- \mathcal{E}_{6}- \mathcal{E}_{7} + \mathcal{E}_{8} &&= \alpha_{6} - \alpha_{8}, 
\end{alignat*}
as well as the complemetary roots $\beta_{1}^{i}  = (- \mathcal{K}_{\mathcal{X}}) - \beta_{0}^{i}$,
\begin{align*}
		\beta_{1}^{1} &= 2 \mathcal{H}_{x} - \mathcal{E}_{3}- \mathcal{E}_{4}- \mathcal{E}_{7}- \mathcal{E}_{8}, \\
		\beta_{1}^{2} &= 2 \mathcal{H}_{x} + 2 \mathcal{H}_{y} - 2\mathcal{E}_{1} - 2 \mathcal{E}_{4} -  
		\mathcal{E}_{5}- \mathcal{E}_{6}- \mathcal{E}_{7}- \mathcal{E}_{8}, \\
		\beta_{1}^{3} &= 2 \mathcal{H}_{x} + 2 \mathcal{H}_{y} - \mathcal{E}_{1} -  \mathcal{E}_{2} -  
		\mathcal{E}_{3}- \mathcal{E}_{4}- 2\mathcal{E}_{5 }- 2\mathcal{E}_{8}.
\end{align*}
Each pair $(\beta_{0}^{i},\beta_{1}^{i})$ generates an $A_{1}^{(1)}$-sublattice 
$Q_{B_{i}} = \operatorname{Span}_{\mathbb{Z}}\{(\beta_{0}^{i},\beta_{1}^{i})\}$ of the lattice $Q = Q(E_{8}^{(1)})$. 
Put $Q_{B} = \operatorname{Span}_{\mathbb{Z}}\{\beta_{0}^{1},\beta_{1}^{1},\beta_{0}^{2},\beta_{1}^{2},\beta_{0}^{3},\beta_{1}^{3}\}$; 
we can choose $\beta_{0}^{1}$, $\beta_{1}^{1}$, $\beta_{0}^{2}$, and $\beta_{0}^{3}$ as a basis of $Q_{B}$.
Next, consider  additional roots corresponding to the constraints,
\begin{alignat*}{2}
	\gamma_{1} &= \mathcal{E}_{1} + \mathcal{E}_{2} - \mathcal{E}_{5} - \mathcal{E}_{6} &&= 
	\alpha_{0} + 2(\alpha_{3} + \alpha_{4} + \alpha_{5}) + \alpha_{6}, \\
	\gamma_{2} &= \mathcal{E}_{3} + \mathcal{E}_{4} - \mathcal{E}_{7} - \mathcal{E}_{8} &&= 
	\alpha_{4} + 2(\alpha_{5} + \alpha_{6} + \alpha_{7}) + \alpha_{8}, \\
	\gamma_{3} &= \mathcal{E}_{1} + \mathcal{E}_{3} - \mathcal{E}_{2} - \mathcal{E}_{4} &&= 
	\alpha_{0} +  \alpha_{4}, \\
	\gamma_{4} &= \mathcal{E}_{5} + \mathcal{E}_{7} - \mathcal{E}_{6} - \mathcal{E}_{8} &&= 
	\alpha_{6} +  \alpha_{8}, \\
	\gamma_{5} &= (\mathcal{H}_{x} - \mathcal{E}_{1} - \mathcal{E}_{2}) - (\mathcal{H}_{y} - \mathcal{E}_{3} - \mathcal{E}_{4}) 
	&&= - \alpha_{0} + \alpha_{1} - 2 \alpha_{3} - \alpha_{4}, 
\end{alignat*}
and put $Q_{G} = \operatorname{Span}_{\mathbb{Z}}\{\gamma_{1},\gamma_{2},\gamma_{3},\gamma_{4},\gamma_{5}\}$. 
Then $Q_{B} = (Q_{G})^{\perp}$.

Since the roots $\beta_{j}^{i}$ are not simple, $(\beta_{j}^{i})^{2} = -4$, they do not act on $\operatorname{Pic}(\mathcal{X})$. 
A possible workaround is to represent $\beta_{j}^{i} = \beta_{j1}^{i} + \beta_{j2}^{i}$, where 
$(\beta_{jk}^{i})^{2} = -2$ and $\beta_{j1}^{i} \bullet \beta_{j2}^{i} = 0$. Then
we can define the action of $w^{*}_{\beta_{j}^{i}}$ on $\operatorname{Pic}(\mathcal{X})$
as a composition of two commuting reflections, 
$w_{\beta_{i}^{j}}^{*}=w_{\beta_{i1}^j}^*\circ w_{\beta_{i2}^j}^*$, 
\begin{equation*}
 w^{*}_{\beta_{j}^{i}}(\bar{\mathcal{D}}) = \mathcal{D} + (\mathcal{D}\bullet \beta_{j1}^{i})\beta_{j1}^{i} 
 + (\mathcal{D}\bullet \beta_{j2}^{i})\beta_{j2}^{i}.
\end{equation*}
Explicitly, we can put
\begin{alignat*}{2}
	w_{\beta_0^1} &= w_{\mathcal{H}_y-\mathcal{E}_1-\mathcal{E}_2}\circ w_{\mathcal{H}_y-\mathcal{E}_5-\mathcal{E}_6}\\
       	w_{\beta_1^1} &= w_{\mathcal{H}_x-\mathcal{E}_3-\mathcal{E}_4}\circ w_{\mathcal{H}_x-\mathcal{E}_7-\mathcal{E}_8}\\
	w_{\beta_{0}^2} &= w_{\mathcal{E}_1-\mathcal{E}_2}\circ w_{\mathcal{E}_4-\mathcal{E}_3} \\
	w_{\beta_{1}^2} &= w_{\mathcal{H}_x+\mathcal{H}_y-\mathcal{E}_1-\mathcal{E}_4-\mathcal{E}_5-\mathcal{E}_7} 
 	                \circ w_{\mathcal{H}_x+\mathcal{H}_y-\mathcal{E}_1-\mathcal{E}_4-\mathcal{E}_6-\mathcal{E}_8} \\
	w_{\beta_{0}^3} &= w_{\mathcal{E}_5-\mathcal{E}_6}\circ w_{\mathcal{E}_8-\mathcal{E}_7} \\
	w_{\beta_{1}^3} &= w_{\mathcal{H}_x+\mathcal{H}_y-\mathcal{E}_1-\mathcal{E}_3-\mathcal{E}_5-\mathcal{E}_8} 
			\circ w_{\mathcal{H}_x+\mathcal{H}_y-\mathcal{E}_2-\mathcal{E}_4-\mathcal{E}_5-\mathcal{E}_8}.
\end{alignat*}
On $Q_B$ this action can be written in the usual way $w_{\beta_{i}}^*(\beta_j) = \beta_j - c_{ij} \beta_i$,
where $c_{ij} = 2(\beta_i \bullet \beta_j)/(\beta_i \bullet \beta_i)$, e.g.,
\begin{equation*}
w_{\beta_{0}^{1}}^{*}: (\beta_{0}^{1}, \beta_{1}^{1} ; \beta_{0}^{2}, \beta_{1}^{2}; \beta_{0}^{3}, \beta_{1}^{3}) \mapsto
(-\beta_{0}^{1}, \beta_{1}^{1} + 2 \beta_{0}^{1} ; \beta_{0}^{2}, \beta_{1}^{2}; \beta_{0}^{3}, \beta_{1}^{3}).
\end{equation*}

Then $w^{*}_{\beta_{j}^{i}}$, together with the automorphism $\sigma$ interchanging $\beta_{0}^{i}$ and 
$\beta_{1}^{i}$,
\begin{equation*}
\sigma = (\mathcal{H}_{x}\mathcal{H}_{y})(\mathcal{E}_{1}\mathcal{E}_{4})(\mathcal{E}_{2}\mathcal{E}_{3})
(\mathcal{E}_{5}\mathcal{E}_{8})(\mathcal{E}_{6}\mathcal{E}_{7}) = 
(\beta_{0}^{1}\beta_{1}^{1})(\beta_{0}^{2}\beta_{1}^{2})(\beta_{0}^{3}\beta_{1}^{3}),
\end{equation*}
that also acts on the lattice $Q_{G}$ as $\sigma(\gamma_{1})  = \gamma_{2}$, $\sigma(\gamma_{2})  = \gamma_{1}$,
$\sigma(\gamma_{3})  = -\gamma_{3}$, $\sigma(\gamma_{4})  = -\gamma_{4}$, and $\sigma(\gamma_{5})  = -\gamma_{5}$,
generate an extended affine Weyl group of type $(A_{1} + A_{1} + A_{1})^{(1)}$ that is a symmetry group of our equation,
\begin{align*}
\widetilde{W}((A_{1} + A_{1} + A_{1})^{(1)}) &= \langle w^{*}_{\beta_{0}^{i}}, w^{*}_{\beta_{1}^{i}}, \sigma \mid 
\sigma w^{*}_{\beta_{0}^{i}} = w^{*}_{\beta_{1}^{i}} \sigma, \\
&\qquad\qquad (w^{*}_{\beta_{0}^{i}})^{2} = (w^{*}_{\beta_{1}^{i}})^{2} = (w^{*}_{\beta_{0}^{i}}w^{*}_{\beta_{1}^{i}})^{\infty} = \sigma^{2}= e
\rangle.
\end{align*}
Note that $\varphi^{*}$ acts on $Q_{B}$ as
\begin{equation*}
\varphi^{*}: (\beta_{0}^{1}, \beta_{1}^{1} ; \beta_{0}^{2}, \beta_{1}^{2}; \beta_{0}^{3}, \beta_{1}^{3}) \mapsto
(\beta_{0}^{1} - \delta, \beta_{1}^{1} + \delta  ; \beta_{0}^{2}, \beta_{1}^{2}; \beta_{0}^{3}, \beta_{1}^{3}),
\end{equation*}
and so $\varphi^{*} = \sigma \circ w^{*}_{\beta_{0}^{1}}$.

\subsection*{Acknowledgement}
The authors are very grateful to the Tsinghua Sanya International Mathematics Forum where all of them were invited to a workshop on discrete integrable dystems in April, 2016. A.~D. was partially suported by the University of Northern Colorado 2016 Summer Support Initiative.
T.~T. was supported by the Japan Society for the
Promotion of Science, Grand-in-Aid (C) (25400106).

\appendix

\section{Divisor map computation for a rational mapping}\label{app:rational}
In this section we explain how to compute the mapping $\tilde{\varphi}^{*}$ in 
Example~\ref{ex:rational}. Recall that $\mathcal{X}_{12} = \bar{\mathcal{X}}_{12}$ is a 
blowup of $\mathbb{P}^{1} \times \mathbb{P}^{1}$ at the points
$A_{1}(0,0)$ and $A_{2}(\infty,\infty)$. 
Let us first introduce local affine coordinate charts  
on $\mathbb{P}^{1} \times \mathbb{P}^{1}$ and $\mathcal{X}_{12}$. We have the standard four charts on 
$\mathbb{P}^{1} \times \mathbb{P}^{1}$ with affine coordinates $(x,y)$, $(X,y)$, $(X,Y)$, $(x,Y)$, and the usual
transition functions $x X = 1$ and $y Y = 1$. Introducing the blowup coordinates
\begin{align*}
x &= u_{1} = U_{1}V_{1},\! & y &= u_{1}v_{1} = V_{1}, \! &  u_{1}&=x, & v_{1}&=y/x,\! & U_{1} &= x/y,\! & V_{1}&= y,\\
X &= u_{2} = U_{2}V_{2},\! & Y &= u_{2} v_{2} = V_{2},\! & u_{2}&= X, & v_{2}&= Y/X,\! & U_{2}&= X/Y,\! & V_{2}&=Y,
\end{align*}
we get the following six charts on $\mathcal{X}_{12}$, also shown on the picture below,
\begin{align*}
	\mathfrak{U}_{1} &=\{(U_{1},V_{1})\}, & 
	\mathfrak{U}_{2} &=\{(u_{1},v_{1})\}, & 
	\mathfrak{U}_{3} &=\{(X,y)\}, \\
	\mathfrak{U}_{4} &=\{(U_{2},V_{2})\}, & 
	\mathfrak{U}_{5} &=\{(u_{2},v_{2})\}, & 		
	\mathfrak{U}_{6} &=\{(x,Y)\};	
\end{align*}

\begin{center}
	\begin{tikzpicture}[>=stealth]
		\begin{scope}
		\draw [black] (-0.5,0) -- (3.5,0) (0,-0.5)--(0,3) (3,-0.5)--(3,3) (-0.5,2.5)--(3.5,2.5);
		\draw [teal, thick, ->] (0,0)--(0.5,0); 
		\draw [teal, thick, ->] (0,0)--(0,0.5); 		
		\draw [teal, thick, ->] (3,0)--(2.5,0); 		
		\draw [teal, thick, ->] (3,0)--(3,0.5); 		
		\draw [teal, thick, ->] (3,2.5)--(3,2); 		
		\draw [teal, thick, ->] (3,2.5)--(2.5,2.5); 		
		\draw [teal, thick, ->] (0,2.5)--(0.5,2.5); 		
		\draw [teal, thick, ->] (0,2.5)--(0,2); 		
		\node at (0,0) [circle, draw=red!100, fill=red!100, thick, inner sep=0pt,minimum size=1.2mm] {};
		\node at (0,0) [red, below left] {\small $A_{1}$};
		\node at (3,2.5) [circle, draw=red!100, fill=red!100, thick, inner sep=0pt,minimum size=1.2mm] {};
		\node at (3,2.5) [red, above right] {\small $A_{2}$};
		\node at (0.5,0) [below left, teal] {\small $x$};
		\node at (0,0.5) [below left, teal] {\small $y$};
		\node at (2.5,0) [below right, teal] {\small $X$};
		\node at (3,0.5) [below right, teal] {\small $y$};
		\node at (0.5,2.5) [above left, teal] {\small $x$};
		\node at (0,2) [above left, teal] {\small $Y$};
		\node at (2.5,2.5) [above right, teal] {\small $X$};
		\node at (3,2) [above right, teal] {\small $Y$};
		\end{scope}	
		\draw [dashed , ->] (5,1.5) -- (3.5,1.5) node [above,align=center,midway] {$\pi$};
		\begin{scope}[xshift=7.5cm]
			\draw [red, -] (-1,1) -- (1,-1);
			\draw [red, -] (2.5,3.5) -- (4.5,1.5);
			\draw [black, - ] (-1.5,0) to [out=0,in=-135] (-0.5,0.5) to [out=45,in=-90]  	(0,1.5) -- (0,3);
			\draw [black, - ] (0,-1.5) to [out=90,in=-135] (0.5,-0.5) to [out=45,in=180]  	(1.5,0) -- (4,0);
			\draw [black, - ] (-0.5,2.5) -- (2,2.5) to [out=0,in=-135] (3,3) to [out=45,in=-90]  	(3.5,4);
			\draw [black, - ] (3.5,-0.5) -- (3.5,1) to [out=90,in=-135] (4,2) to [out=45,in=180]  (5,2.5);

			\draw [teal, thick, ->] (-0.01,2.5)--(0.5,2.5);
			\node at (0.5,2.5) [above left, teal] {\small $x$};
			\draw [teal, thick, ->] (0,2.5) -- (0,2);
			\node at (0,2) [above left, teal] {\small $Y$};
			\draw [teal, thick, ->] (3.5,0) -- (3.5,0.5);
			\node at (3.5,0.5) [below right, teal] {\small $y$};
			\draw [teal, thick, ->] (3.51,0) -- (3,0);
			\node at (3,0) [below right, teal] {\small $X$};
			\draw [teal, thick, ->] (0.51,-0.51)--(0.1,-0.1);
			\node at (0.1,-0.1) [below, teal] {\small $v_{1}$};
			\draw [teal, thick, ->] (0.5,-0.5) to [out=45,in=-145]  (0.94,-0.14);
			\node at (0.94,-0.2) [below, teal] {\small $u_{1}$};

			\draw [teal, thick, ->] (-0.51,0.51)--(-0.1,0.1);
			\node at (-0.1,0.1) [left, teal] {\small $U_{1}$};
			\draw [teal, thick, ->] (-0.5,0.5) to [out=45,in=-125]  (-0.14,0.94);
			\node at (-0.14,0.94) [left, teal] {\small $V_{1}$};

			\draw [teal, thick, ->] (2.99,3.01)--(3.4,2.6);
			\node at (3.4,2.6) [above, teal] {\small $v_{2}$};
			\draw [teal, thick, ->] (3,3) to [out=-135,in=35]  (2.56,2.64);
			\node at (2.56,2.7) [above, teal] {\small $u_{2}$};

			\draw [teal, thick, ->] (4.01,1.99)--(3.6,2.4);
			\node at (3.6,2.4) [right, teal] {\small $U_{2}$};
			\draw [teal, thick, ->] (4,2) to [out=-135,in=55] (3.64,1.56);
			\node at (3.64,1.56) [right, teal] {\small $V_{2}$};
			
			\node at (-1,1) [above left, teal] {\tiny $V_{1} = 0$};
			\node at (-1.5,0) [left, teal] {\tiny $U_{1} = 0$};

			\node at (0,-1.5) [below, teal] {\tiny $v_{1} = 0$};
			\node at (1,-1) [below right, teal] {\tiny $u_{1} = 0$};

			\node at (4,0) [right, teal] {\tiny $y = 0$};
			\node at (3.5,-0.5) [below, teal] {\tiny $X = 0$};

			\node at (5,2.5) [right, teal] {\tiny $U_{2} = 0$};
			\node at (4.5,1.5) [below right, teal] {\tiny $V_{2} = 0$};

			\node at (2.5,3.5) [above left, teal] {\tiny $u_{2} = 0$};
			\node at (3.5,4) [above, teal] {\tiny $v_{2} = 0$};

			\node at (0,3) [above, teal] {\tiny $x = 0$};
			\node at (-0.5,2.5) [left, teal] {\tiny $Y = 0$};

			\node at (1.5,2.5) [below] {\tiny $H_{y} - E_{2}$};
			\node at (0,1.5) [below,rotate=90] {\tiny $H_{x} - E_{1}$};
			\node at (0,0) [above right, red] {\tiny $E_{1}$};

			\node at (2,0) [above] {\tiny $H_{y} - E_{1}$};
			\node at (3.5,1) [above,rotate=90] {\tiny $H_{x} - E_{2}$};
			\node at (3.5,2.5) [below left, red] {\tiny $E_{2}$};
			
		\end{scope}	
	\end{tikzpicture}	
\end{center}	

From this picture we immediately see that, for example, the local defining expressions for the divisor 
$E_{1}$ are $E_{1} = \left\{ (\mathfrak{U_{1}}; V_{1}), (\mathfrak{U_{2}}; u_{1}) \right\}$. Then
$\tilde{\varphi}^{*}(\bar{E}_{1})$ is given by the collections 
$(\mathfrak{U}_{i}\cap \tilde{\varphi}^{-1}(\bar{\mathfrak{U}}_{1}); \tilde{\varphi}^{*}(\bar{V}_{1}))$
and
$(\mathfrak{U}_{i}\cap \tilde{\varphi}^{-1}(\bar{\mathfrak{U}}_{2}); \tilde{\varphi}^{*}(\bar{u}_{1}))$.
Writing down the expressions for $(\bar{U}_{1},\bar{V}_{1})$ in all six coordinate charts,
\begin{align*}
	\left(\bar{U}_{1}, \bar{V}_{1}\right) &= \left( \frac{V_{1} - 1}{U_{1} V_{1}^{2}}, U_{1}^{2} V_{1}  \right)
	= \left( \frac{u_{1} v_{1} - 1}{u_{1}^{2} v_{1}}, \frac{u_{1}}{v_{1}}   \right) 
	= \left( \frac{X(y-1)}{y}, \frac{1}{X^{2} y}   \right)\\
	&= \left( U_{2}V_{2} (1 - V_{2}), \frac{1}{U_{2}^{2} V_{2}}  \right)
	= \left(u_{2}(1 - u_{2} v_{2}), \frac{v_{2}}{u_{2}}  \right) 
	= \left( \frac{1 - Y}{x}, x^{2} Y \right),
\end{align*}
we note that, for example, 
$\mathfrak{U}_{1}\cap \tilde{\varphi}^{-1}(\bar{\mathfrak{U}}_{1}) = 
\mathfrak{U}_{1} \backslash \left(\{U_{1} = 0\}\cup \{V_{1} = 0\}\right)$, since the lines 
$U_{1} = 0$ and $V_{1} = 0$ map to the boundary $\bar{v}_{1} = 0$ of the $\bar{\mathfrak{U}}_{1}$ chart. Since
$\tilde{\varphi}^{*}(\bar{V}_{1}) = U_{1}^{2} V_{1}$, it does not vanish on 
$\mathfrak{U}_{1}\cap \tilde{\varphi}^{-1}(\bar{\mathfrak{U}}_{1})$. On the other hand, 
$\mathfrak{U}_{6}\cap \tilde{\varphi}^{-1}(\bar{\mathfrak{U}}_{1}) = 
\mathfrak{U}_{6} \backslash \left(\{x = 0\}\right)$, 
$\tilde{\varphi}^{*}(\bar{V}_{1}) = x^{2}Y$, which vanishes on $Y=0$ in 
$\mathfrak{U}_{6}\cap \tilde{\varphi}^{-1}(\bar{\mathfrak{U}}_{1})$, which is a local defining equation 
for $H_{y} - E_{2}$ in $\mathfrak{U}_{6}$. Proceeding along the same lines for the remaining charts, we get
\begin{align*}
	\tilde{\varphi}^{*}\left( (\mathfrak{U_{1}}; V_{1}) \right) &= 
	\left\{  \left( \mathfrak{U}_{5} \backslash  \{u_{2} = 0\}; v_{2} \right), 
	\left( \mathfrak{U}_{6} \backslash \{x = 0\}; Y \right)
	\right\},\\
	\tilde{\varphi}^{*}\left( (\mathfrak{U_{2}}; u_{1}) \right) &= 
	\left\{  \left( \mathfrak{U}_{1} \backslash  \left(\{V_{1} = 0\}\cup \{V_{1} = 1\}\right); U_{1} \right),\right.
	\\ & \qquad
	\left.
	\left( \mathfrak{U}_{5} \backslash \left(\{u_{2} = 0\}\cup \{u_{2} v_{2} = 1\}\right); v_{2} \right),
	\left( \mathfrak{U}_{6} \backslash  \{Y = 1\}; xY \right)
	\right\}.
\end{align*}
Thus, $\tilde{\varphi}^{*}(\bar{E}_{1}) = (H_{x} - E_{1}) + (H_{y} - E_{2})$. In exactly the same way we get the following 
mapping between the $-1$-curves,
\begin{alignat*}{2}
	\tilde{\varphi}^{*}(\bar{E}_{2}) &= (H_{x} - E_{2}) + (H_{y} - E_{1}), &\quad 
	\tilde{\varphi}^{*}(\bar{H}_{y} - \bar{E}_{1}) &= (H_{x} - E_{1}) + E_{1},\\
	\tilde{\varphi}^{*}(\bar{H}_{x} - \bar{E}_{2}) &= (H_{y} - E_{1}) + E_{1}, &\quad 
	\tilde{\varphi}^{*}(\bar{H}_{y} - \bar{E}_{2}) &= (H_{x} - E_{2}) + E_{2},\\
	\intertext{and also}
	\tilde{\varphi}^{*}(\bar{H}_{x} - \bar{E}_{1}) &= \pi^{*}(V(y-1))\in \mathcal{H}_{y}.	
\end{alignat*}
Passing to equivalence classes, we get
\begin{alignat*}{2}
	\tilde{\varphi}^{*}(\bar{\mathcal{H}}_{x}) &= \mathcal{H}_{x} + 2 \mathcal{H}_{y} - \mathcal{E}_{1} - \mathcal{E}_{2},
	&\quad 
	\tilde{\varphi}^{*}(\bar{\mathcal{E}}_{1}) &= \mathcal{H}_{x} + \mathcal{H}_{y} - \mathcal{E}_{1} - \mathcal{E}_{2},\\
	\tilde{\varphi}^{*}(\bar{\mathcal{H}}_{y}) &= 2\mathcal{H}_{x} + \mathcal{H}_{y} - \mathcal{E}_{1} - \mathcal{E}_{2},
	&\quad
	\tilde{\varphi}^{*}(\bar{\mathcal{E}}_{2}) &= \mathcal{H}_{x} + \mathcal{H}_{y} - \mathcal{E}_{1} - \mathcal{E}_{2}.
\end{alignat*}

\begin{remark}
	For a generic $k\in \mathbb{C}^{\times}$, 
$V(\bar{x} - k)\in \bar{\mathcal{H}}_{x}\in \operatorname{Pic}\left({\mathbb{P}^{1} \times \mathbb{P}^{1}}\right)$,
its lift to the surface is 
$\bar{\pi}^{*}(V(\bar{x} - k))\in \bar{\mathcal{H}}_{x}\in \operatorname{Pic}\left(\bar{\mathcal{X}}_{12}\right)$, and 
from the commutativity of the diagram we see that
\begin{align*}
\tilde{\varphi}^{*}(\bar{\mathcal{H}}_{x}) &= \tilde{\varphi}^{*}\circ \bar{\pi}^{*}([V(\bar{x} - k)]) = 
\left[\pi^{*}\circ \varphi^{*} (V(\bar{x} - k))\right] \\
&= \left[\pi^{*}\left(V\left(\frac{x(y-1)}{y^{2}}  - k\right)\right)\right] 
= \left[\pi^{*} \left(V\left(\frac{x(y-1) - k y^{2}}{y^{2}}\right)\right)\right]. 
\end{align*}
We need to be careful here. The curve $V(x(y-1) - k y^{2})$ is a $(1,2)$-curve, and so it belongs to the class 
$\mathcal{H}_{x} + 2 \mathcal{H}_{y}\in \operatorname{Pic}(\mathbb{P}^{1} \times \mathbb{P}^{1})$. Even though
it does pass through the points $A_{1}(0,0)$ and $A_{2}(\infty,\infty)$, its class in 
$\operatorname{Pic}(\mathcal{X}_{12})$ is still $\mathcal{H}_{x} + 2 \mathcal{H}_{y}$, and not 
$\mathcal{H}_{x} + 2 \mathcal{H}_{y} - \mathcal{E}_{1} - \mathcal{E}_{2}$. Indeed, looking at this equation 
in the chart $\mathfrak{U}_{2}$, we get
\begin{align*}
\pi^{*}\left(V(x(y-1) - k y^{2})\right) &= V(u_{1}(u_{1}v_{1} - 1 - k u_{1}v_{1}^{2}))  \\
&= V(u_{1}) + V(u_{1}v_{1} - 1 - k u_{1}v_{1}^{2}),
\end{align*}
where $V(u_{1})\in \mathcal{E}_{1}$ and 
$V(u_{1}v_{1} - 1 - k u_{1}v_{1}^{2})\in \mathcal{H}_{x} + 2 \mathcal{H}_{y} - \mathcal{E}_{1}$
(locally). However, \emph{since $A_{1}$ is an indeterminate point of the mapping}, 
the $u_{1}$-factor cancels out, and we get 
\begin{align*}
\pi^{*} \left(V\left(\frac{x(y-1) - k y^{2}}{y^{2}}\right)\right) &= 
V\left( \frac{u_{1}v_{1} - 1 - k u_{1}v_{1}^{2}}{u_{1}v_{1}^{21}}  \right) \\
&= 
V(u_{1}v_{1} - 1 - k u_{1}v_{1}^{2})\in \mathcal{H}_{x} + 2 \mathcal{H}_{y} - \mathcal{E}_{1} - \mathcal{E}_{2}.
\end{align*}


\end{remark}

\section{Deautonomization of a generic QRT mapping at a generic fiber}\label{app:deautoQRT}

In this section we show how to deautonomize a generic QRT mapping on a generic (i.e. elliptic) fiber,
thus obtaining simple expressions for the elliptic difference Painlev\'e equation.

Using the fact that $r_{y}^{*}$ is an involution preserving the intersection pairing on the Picard lattice,
it is possible to show (e.g., see Chapter 5.1 and in particular Lemma~5.1.2 of \cite{Duistermaat} 
that 
the pull-back action $\varphi^{*}$ of the half of a generic QRT mapping 
$\varphi=\sigma_{xy} \circ r_y:\mathcal{X}\to \mathcal{X}(=\bar{\mathcal{X}})$ 
on the Picard lattice is given by
\begin{align*}
\bar{\mathcal{H}}_x &\mapsto  4\mathcal{H}_x+\mathcal{H}_y-(\mathcal{E}_1+\mathcal{E}_2+\dots+\mathcal{E}_8),\\
\bar{\mathcal{H}}_y &\mapsto \mathcal{H}_x,\\
\bar{\mathcal{E}}_i &\mapsto \mathcal{H}_x-\mathcal{E}_i \quad (i=1,2,\dots,8).
\end{align*}	
%
It is well known that $(\varphi^2)^*$ acts as a translation of the $E_8^{(1)}$ lattice and its 
deautonomization gives the elliptic difference Painlev\'e equation \cite{KNY16, Murata04}.

Let us fix the embedding of the elliptic torus as in \eqref{eq:FG12}.
Then $\varphi$ acts on parameters as 
\begin{align*}
\bar{h}_x &=  4h_x+h_y-(e_1+e_2+\dots+e_8),\\
\bar{h}_y &= h_x,\\
\bar{e}_i &= h_x-e_i \quad (i=1,2,\dots,8),
\end{align*}
and if we change the embedding of $\mathcal{C}$, c.f.~\eqref{eq:changeC}, 
by adding a translation $\tau(u) = \bar{u} = u-(h_x-h_y+d)$,
$\varphi^2$ acts on $\operatorname{Pic}(\mathcal{X})$ as 
\begin{equation*}
\bar{h}_x =  h_x+d,\qquad 
\bar{h}_y = h_y-d,\qquad
\bar{e}_i = e_i \quad (i=1,2,\dots,8).	
\end{equation*}

Using Proposition~\ref{prop_blow}, we can obtain an explicit form of the mapping as follows. First,
observe that in the embedding \eqref{eq:FG12}, $\bar{x}_{1} = \infty$, $\bar{x}_{2} = 0$. Also,
\begin{equation*}
\varphi^{*}(V(\bar{x} - \bar{x}_{i})) = \varphi^{*}((\bar{H}_{x} - \bar{E}_{i}) + \bar{E}_{i})
= \left(3 H_{x} + H_{y} - \sum_{j=1,j\neq i}^{8} E_{j}\right) + (H_{x} - E_{i}).
\end{equation*}
Therefore, we get 
\begin{equation}\label{eq:generic_QRT}
	\left\{
	\begin{aligned}
		\bar{x} &= C_{1} \frac{x\, p_{2}(x,y)}{p_{1}(x,y)}\\ \bar{y} &= x 
	\end{aligned}
	\right.,
\end{equation}
where polynomials $p_{i}(x,y)$ can be explicitly obtained 
using Remark~\ref{rem:det-form},
\begin{align*}
p_1(x,y) &= -\begin{vmatrix} 
x&x^2&x^3&y&xy&x^2y&x^3y\\
x_3&x_3^2&x_3^3&y_3&x_3y_3&x_3^2y_3&x_3^3y_3\\
x_4&x_4^2&x_4^3&y_4&x_4y_4&x_4^2y_4&x_4^3y_4\\
x_5&x_5^2&x_5^3&y_5&x_5y_5&x_5^2y_5&x_5^3y_5\\
x_6&x_6^2&x_6^3&y_6&x_6y_6&x_6^2y_6&x_6^3y_6\\
x_7&x_7^2&x_7^3&y_7&x_7y_7&x_7^2y_7&x_7^3y_7\\
x_8&x_8^2&x_8^3&y_8&x_8y_8&x_8^2y_8&x_8^3y_8
\end{vmatrix},\\
p_2(x,y) &= \phantom{-}
\begin{vmatrix} 
1&x&x^2&x^3&y&xy&x^2y\\
1&x_3&x_3^2&x_3^3&y_3&x_3y_3&x_3^2y_3\\
1&x_4&x_4^2&x_4^3&y_4&x_4y_4&x_4^2y_4\\
1&x_5&x_5^2&x_5^3&y_5&x_5y_5&x_5^2y_5\\
1&x_6&x_6^2&x_6^3&y_6&x_6y_6&x_6^2y_6\\
1&x_7&x_7^2&x_7^3&y_7&x_7y_7&x_7^2y_7\\
1&x_8&x_8^2&x_8^3&y_8&x_8y_8&x_8^2y_8
\end{vmatrix},
\end{align*}
and where the normalization constant can be evaluated via 
\begin{equation*}
C_1=\frac{\bar{x}_{3} p_{1}(x_{3},y)}{x_{3}p_{2}(x_{3},y)} = \frac{\prod_{i=3}^8[e_2-e_i]}{\prod_{i=3}^8[e_1-e_i]}
\end{equation*}
after substituting $\bar{x}_{3} = \bar{F}_{12}(h_{x}-e_{3})$, $x_{i} = F_{12}(e_{i})$, and $y_{i} = G_{12}(e_{i})$.

Moreover, since the divisor class $\bar{\D}=\bar{\mathcal{H}}_x+\bar{\mathcal{H}}_y$ can be 
decomposed and pull-backed using two linearly equivalent divisors $\bar{D}_{1}\sim \bar{D}_{2}$,
\begin{align*}
\bar{D}_{1} & =(\bar{H}_x+\bar{H}_y-\bar{E}_6-\bar{E}_7-\bar{E}_8)+\bar{E}_6+\bar{E}_7+\bar{E}_8\\
&\mapsto(2H_x+H_y-\sum\nolimits_{i=1}^{4}E_i-E_5) +(H_x-E_6)+(H_x-E_7)+(H_x-E_8),\\
\bar{D}_{2} &=(\bar{H}_x+\bar{H}_y-\bar{E}_5-\bar{E}_7-\bar{E}_8)+\bar{E}_5+\bar{E}_7+\bar{E}_8\\
&\mapsto(2H_x+H_y-\sum\nolimits_{i=1}^{4}E_i-E_6)+(H_x-E_5)+(H_x-E_7)+(H_x-E_8),
\end{align*}
using Proposition~\ref{prop_d} we get the equation
\begin{align}
\frac{\bar{p}_{678}(\bar{x},\bar{y}) }{\bar{p}_{578}(\bar{x},\bar{y})} =C_3 \frac{(x-x_6)p_{345}(x,y)}{(x-x_5)p_{346}(x,y)} \label{generic_bxy} 
\end{align}
where 
\begin{align*}
\bar{p}_{ijk}(\bar{x},\bar{y})=&
\begin{vmatrix}
1&\bar{x}&\bar{y}&\bar{x} \bar{y}\\
1&\bar{x}_i&\bar{y}_i&\bar{x}_i \bar{y}_i\\
1&\bar{x}_j&\bar{y}_j&\bar{x}_j \bar{y}_j\\
1&\bar{x}_k&\bar{y}_k&\bar{x}_k \bar{y}_k
\end{vmatrix}\\
p_{ijk}(x,y)=&
\begin{vmatrix}
1&x&x^2&y&x y&x^2 y\\
0&0&0&0&0&1\\
1&0&0&0&0&0\\
1&	x_i	&x_i^2&	y_i&	x_i y_i&	x_i^2 y_i\\
1&	x_j	&x_j^2&	y_j&	x_j y_j&	x_j^2 y_j\\
1&	x_k	&x_k^2&	y_k&	x_k y_k&	x_k^2 y_k
\end{vmatrix}
=-
\begin{vmatrix}
x&x^2&y&x y\\
x_i	&	x_i^2	&	y_i	&	x_i y_i\\
x_j	&	x_j^2	&	y_j	&	x_j y_j\\
x_k	&	x_k^2	&	y_k	&	x_k y_k
\end{vmatrix}
\end{align*}
and $C_{3}$ is a normalization constant, 
\begin{align*}
C_3=& \frac{[h_x-e_1-e_5]^2[h_y-e_1-e_5][2h_x+h_y-\sum_{i=2,3,4,6,7,8}e_i]}{[h_x-e_1-e_6]^2[h_y-e_1-e_6][2h_x+h_y-\sum_{i=2,3,4,5,7,8}e_i]} \\
&\qquad \cdot \frac{[e_5-e_1]^3}{[e_6-e_1]^3}
 \prod_{i=2,3,4,7,8}\frac{[e_6-e_i]}{[e_1-e_i]}.
\end{align*}

Note that expressions~\eqref{eq:generic_QRT}  of the elliptic difference Painlev\'e equation are essentially the same as obtained by Murata
\cite{Murata04}, where degree $(4,1)$ polynomials written by $10\times10$ matrices are used without factorization. 
The combination of equation $\bar{y} = x$ and \eqref{generic_bxy}, the latter of which can be solved linearly for $\bar{x}$, 
gives a new expression of the elliptic difference Painlev\'e equation, where only degree $(1,1)$ or $(2,1)$ polynomials 
written by $4\times 4$ matrices are used.

\section{C.~F.~Schwartz's algorithm}\label{app:CFS}

In this section we explain, following \cite{Schwartz94}, how to transform a generic bi-degree $(2,2)$ curve $\Gamma$
given by the equation
$$a_{00}+a_{10}x+a_{20}x^2+a_{01}y+a_{11}xy+a_{21}x^2y+a_{02}y^2+a_{12}xy^2+a_{22}x^2y^2=0$$
to the Weierstrass's normal form $Y^2=4X^3-g_2Y-g_3$.

First we take two points $(a_1,0)$ and $(a_2,\infty)$ on $\Gamma$ and make a change of variables 
\begin{equation*}
\tilde{x}=\frac{x-a_1}{x-a_2},
\end{equation*}
then $\Gamma$ passes through the points $(0,0)$ and $(\infty,\infty)$ and its equation in coordinates $(\tilde{x},y)$, 
that we now write as $(x,y)$, takes a simpler form 
\begin{equation*}
ax^2+b xy+cy^2+dx+ey=ux^2y+vxy^2.
\end{equation*}

Next, put $X$ and $Y$ as\footnote{These equations are changed from \cite{Schwartz94}, since there was an error.}
\begin{align*}
X=& -\frac{ a c^2+c d v-c^2 u y+b c v y+e v^2 y}{c-v x},\\
Y=&\frac{1}{(c-v x)} \big(-c^2 d u+b c d v-a c e v+(-2 a c^2 u +a b c v -c d u v +a e v^2)x \\ &+(-b c^2 u+b^2 c v -2 a c^2 v -2 c d v^2 +b e v^2)y
+2(c^2 u^2 -b c u v -e u v^2 )x y\\&+2(c^2 u v-b c v^2-e v^3)y^2\big).
\end{align*}
Then $X$ and $Y$ satisfy the equation
\begin{equation*}
Y^2=4X^3+AX^2+BX+C
\end{equation*}
with
\begin{align*}
A=&b^2+8a c+4e u+4d v,\\
B=&4(a^2 c^2+a c e u+a c d v+d e u v)+2(a b^2 c+ b c d u+a b e v),\\
C=&(a b c+c d u+a e v)^2.
\end{align*}
Replacing $X$ with $X-A/12$ reduces it to the Weierstrass normal form
\begin{equation*}
Y^2=4X^3-g_1X-g_2,\quad\text{ where } g_2=-B+\frac{A^2}{12}, \quad g_3=-C+\frac{AB}{12}-\frac{A^3}{216},
\end{equation*}
and the discriminant is $\Delta= g_2^3-27g_3^2$.

\section{A birational representation of $W(E_8^{(1)})$}\label{app:bir-rep}

Let us again fix the embedding of the elliptic torus as in \eqref{eq:FG12}.

\begin{theorem}\label{tmp:bir-W8}
	A birational representation of the affine Weyl group ${W}(E_{8}^{(1)})$ is given by the following maps:
	\begin{align*}
	w_{0}:\quad& \bar{x}=\frac{1}{x} ,\ \bar{y}=\frac{1}{y},\ \bar{e}_{1}= e_{2},\ \bar{e}_{2} = e_{1};\\[3pt] 
	w_{1}:\quad& \bar{x}=y ,\ \bar{y}=x,\ \bar{h}_{x}= h_{y},\ \bar{h}_{y} = h_{x};\\[3pt] 
	w_{2}:\quad& \bar{x}=\frac{x}{y},\ \bar{y}=\frac{1}{y} ,\ \bar{h}_{x} = h_{x} + h_{y} - e_{1} - e_{2},\ 
	\bar{e}_{1} = h_{y} - e_{2},\ \bar{e}_{2} = h_{y} - e_{1};\\[3pt] 
	w_{3}:\quad&
	 \bar{x}=F_{23}(e_1) (x - F_{12}(e_{3})),\ 
	 \bar{y}=G_{23}(e_1) (y - G_{12}(e_{3})),\ \bar{e}_{2} = e_{3},\ \bar{e}_{3} = e_{2};\\[3pt]
	w_{i}:\quad& \bar{x}=x,\ \bar{y}=y,\ 
	 \bar{e}_{i-1} = e_{i},\ \bar{e}_{i} = e_{i-1},\ i=4,\dots,8.
	\end{align*}
\end{theorem}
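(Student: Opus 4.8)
The plan is to show that the assignment $w_i \mapsto$ (explicit birational map, together with its prescribed action on the parameters) extends to a well-defined homomorphism from the abstract Coxeter group $W(E_8^{(1)})$ into the group of birational transformations of $\mathbb{P}^1\times\mathbb{P}^1$ (equipped with the parameter dynamics), and that this homomorphism is faithful. The argument rests on two pillars: the uniqueness-modulo-Möbius principle established in the definition of deautonomization in Section~2.2, and the standard fact (e.g. \S3 of \cite{Kac}) that the reflections in a simple-root basis whose Cartan matrix is of type $E_8^{(1)}$ satisfy exactly the Coxeter relations of $W(E_8^{(1)})$ and generate a faithful copy of it inside the orthogonal group of the lattice.

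First I would check that each generator realizes the corresponding simple reflection. For each $i$ the goal is to verify that the stated map induces on $\operatorname{Pic}(\mathcal{X})$ precisely the reflection $w_i^*(\bar{\mathcal{D}}) = \mathcal{D} + (\alpha_i\bullet \mathcal{D})\alpha_i$ attached to the simple root $\alpha_i$ of \eqref{eq:a8roots}, and that the induced parameter evolution matches, via $\bar\iota^* = \iota^*\circ w_i^*$ for the fixed embedding $\iota$ of \eqref{eq:FG12}. Several cases are immediate geometric operations: $w_1$ is the swap of the two factors, realizing the reflection in $\alpha_1 = \mathcal{H}_x - \mathcal{H}_y$; $w_0$ is the involution $(x,y)\mapsto(1/x,1/y)$ exchanging $\iota(e_1) = (\infty,\infty)$ and $\iota(e_2) = (0,0)$, hence exchanging $\mathcal{E}_1$ and $\mathcal{E}_2$; and $w_4,\dots,w_8$ are the identity map on $\mathbb{P}^1\times\mathbb{P}^1$ with a relabelling of the blowup points, realizing $\alpha_i = \mathcal{E}_{i-1}-\mathcal{E}_i$ because the corresponding points sit in general position. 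The remaining generators $w_2$ (reflection in $\mathcal{H}_y-\mathcal{E}_1-\mathcal{E}_2$) and $w_3$ (reflection in $\mathcal{E}_2-\mathcal{E}_3$, requiring an affine normalization on each factor) I would treat by applying Proposition~\ref{prop_blow} to the prescribed Picard action and checking that the reconstructed formula, with normalization constants fixed through the parameter evolution, coincides with the stated one; for instance $w_2^*(\bar{\mathcal{H}}_x) = \mathcal{H}_x+\mathcal{H}_y-\mathcal{E}_1-\mathcal{E}_2$, $w_2^*(\bar{\mathcal{E}}_1) = \mathcal{H}_y-\mathcal{E}_2$, $w_2^*(\bar{\mathcal{E}}_2) = \mathcal{H}_y-\mathcal{E}_1$, and Proposition~\ref{prop_blow} returns $\bar x = x/y$, $\bar y = 1/y$.

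Next I would establish the relations and faithfulness. Since the $\alpha_i$ form a simple-root basis of type $E_8^{(1)}$ (Figure~\ref{fig:dyn-e8}), the reflections $w_i^*$ satisfy $(w_i^*)^2 = \mathrm{id}$, $(w_i^*w_j^*)^2 = \mathrm{id}$ when $\alpha_i\bullet\alpha_j = 0$, and $(w_i^*w_j^*)^3 = \mathrm{id}$ when $\alpha_i\bullet\alpha_j = 1$. To lift a relation $w_{i_1}\cdots w_{i_k} = e$ of $W(E_8^{(1)})$ to the birational maps, note that the composed map induces $(w_{i_1}\cdots w_{i_k})^* = \mathrm{id}$ on $\operatorname{Pic}(\mathcal{X})$, so the parameter evolution governed by $\bar\iota^* = \iota^*\circ(\cdot)^*$ returns every parameter to itself; the resulting self-map $\mathcal{X}_{\mathbf{a}}\to\mathcal{X}_{\mathbf{a}}$ induces the identity on the Picard lattice and hence, by the uniqueness-modulo-Möbius principle of Section~2.2, is a product of Möbius transformations of the two factors. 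Since it fixes each $\mathcal{E}_i$ it fixes each of the eight blowup points, which for a generic configuration forces the Möbius transformation on each factor to be the identity. Thus every relation lifts, the assignment is a homomorphism, and the same argument shows injectivity, giving a faithful birational representation of $W(E_8^{(1)})$.

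The main obstacle, as already flagged after the definition of deautonomization, is controlling the Möbius-transformation ambiguity so that the parameter-dependent rescalings occurring in $w_2$ and $w_3$ (the factors $F_{23}(e_1)$, $G_{23}(e_1)$ and the implicit normalizations) compose coherently rather than accumulating spurious constants. The resolution is to carry out every computation relative to the single fixed embedding $\iota$ of \eqref{eq:FG12}: this gives each generator a canonical normalization, pins down every constant through $\bar\iota^* = \iota^*\circ w_i^*$, and reduces the verification of the relations to the lattice-level computation described above.
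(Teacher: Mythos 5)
Your proposal is correct and follows essentially the same route as the paper: each generator is reconstructed from its prescribed reflection action on $\operatorname{Pic}(\mathcal{X})$ via Proposition~\ref{prop_blow}, with the normalization constants pinned down through the fixed elliptic embedding $\iota$ of \eqref{eq:FG12} (the paper works out only the $w_{3}$ case and declares the rest analogous). The one genuine addition in your write-up is the explicit verification that the Coxeter relations lift to the birational maps via the uniqueness-modulo-M\"obius principle and the genericity of the eight base points; the paper leaves this step implicit, and your argument for it is sound.
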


This realization is essentially the same as in \cite{KNY16}, but here
we obtain it using the factorization formula.

\begin{proof} Similarly to the proof of Corollary~\ref{cor:bira-D6}, these expressions are 
 obtained by a direct application of Proposition~\ref{prop_blow}.
Here we show it for $w_{3}$ that is induced by a reflection in the 
root $\alpha_{3} = \mathcal{E}_{2} - \mathcal{E}_{3}$. 	

Since $(x_{1},y_{1})=(\infty,\infty)$ and $(x_{2},y_{2})=(0,0)$
\begin{align*}
w_{3}^{*}(V(\bar{x} - 0)) &= \pi_{*} \tilde{w}_{3}^{*}((\bar{H}_{x} - \bar{E}_{2}) + \bar{E}_{2}) 
= \pi_{*}((H_{x} - E_{3}) + E_{3}) = H_{x} - E_{3}\\
w_{3}^{*}(V(\bar{x} - \infty)) &= \pi_{*} \tilde{w}_{3}^{*}((\bar{H}_{x} - \bar{E}_{1}) + \bar{E}_{1}) 
= \pi_{*}((H_{x} - E_{1}) + E_{1}) = H_{x} - E_{1},
\end{align*}
and so $\bar{x} = C_{1} (x - x_{3}) = C_{1}(x - F_{12}(e_{3}))$. 
Similarly,  $\bar{y} = C_{2}(y - G_{12}(e_{3}))$. To evaluate the normalization 
constants, note that $\tilde{w}_{3}^{*}(\bar{E}_{3}) = E_{2}$, and so 
\begin{equation*}
(\bar{F}_{12}(\bar{e}_3),\bar{G}_{12}(\bar{e}_3)) = (F_{13}(e_2),G_{13}(e_2)) = - (C_{1}\, F_{12}(e_{3}), C_{2}\, G_{12}(e_{3})).
\end{equation*}
Thus,
\begin{equation*}
C_{1} = - \frac{F_{13}(e_{2})}{F_{12}(e_{3})} = F_{23}(e_{1}),\qquad 
C_{2} = -\frac{G_{13}(e_2)}{G_{12}(e_3)} = G_{23}(e_1),
\end{equation*}
and we obtain the birational realization of $w_3$.
\end{proof}

\end{document}